\documentclass[none,preprint]{imsart}

\RequirePackage[OT1]{fontenc}
\RequirePackage{amsthm,amsmath,bbm,amssymb}
\RequirePackage{natbib}
\RequirePackage{booktabs,caption}
\bibliographystyle{imsart-nameyear}
\RequirePackage[colorlinks,citecolor=blue,urlcolor=blue]{hyperref}

\newtheorem{definition}{Definition}
\newtheorem{lemma}{Lemma}
\newtheorem{proposition}{Proposition}
\newtheorem{theorem}{Theorem}
\newtheorem{fact}{Fact}

\theoremstyle{remark}
\newtheorem{remark}{Remark}

\theoremstyle{remark}

\theoremstyle{example}
\newtheorem{example}{Example}

\theoremstyle{corollary}
\newtheorem{corollary}{Corollary}


\startlocaldefs
\numberwithin{equation}{section}
\theoremstyle{plain}

\endlocaldefs

\begin{document}

\begin{frontmatter}
\title{Nonparametric clustering of functional data using pseudo-densities}
\runtitle{Nonparametric clustering of functional data}

\begin{aug}
\author{\fnms{Mattia} \snm{Ciollaro}\ead[label=e1]{ciollaro@cmu.edu}},
\author{\fnms{Christopher R.} \snm{Genovese}\thanksref{t3}\ead[label=e2]{genovese@stat.cmu.edu}}
\and
\author{\fnms{Daren} \snm{Wang}\ead[label=e3]{darenw@andrew.cmu.edu}}

\thankstext{t3}{Research supported in part by National Science Foundation Grants
  NSF-DMS-1208354 and NFS-DMS-1513412.}
\runauthor{Ciollaro, Genovese, and Wang}

\affiliation{Carnegie Mellon University}

\address{Department of Statistics\\
Carnegie Mellon University\\
5000 Forbes Avenue\\
Pittsburgh, PA 15213\\
\printead{e1}\\
\phantom{E-mail:\ }\printead*{e2}\\
\phantom{E-mail:\ }\printead*{e3}}

\end{aug}

\begin{abstract}
We study nonparametric clustering of smooth random curves on the basis of the
$L^2$ gradient flow associated to a pseudo-density functional and we show that 
the clustering is well-defined both at the population and at the sample level. 
We provide an algorithm to mark significant local modes, which are
associated to informative sample clusters, and we derive its consistency properties.
Our theory is developed under weak assumptions, which essentially
reduce to the integrability of the random curves, and does not require to project 
the random curves on a finite-dimensional
subspace. 
However, if the underlying probability distribution
is supported on a finite-dimensional subspace, we show that the
pseudo-density and the expectation of a kernel density estimator induce the
same gradient flow, and therefore the same clustering.
Although our theory is developed for smooth curves that belong to an 
infinite-dimensional functional space, we also provide consistent procedures that can be
used with real data (discretized and noisy observations).
\end{abstract}


\begin{keyword}
\kwd{Modal clustering}
\kwd{Pseudo-density}
\kwd{Gradient flow}
\kwd{Functional data analysis}
\end{keyword}

\end{frontmatter}

\section{Introduction}

In Functional Data Analysis (\citealp{ramsay2005functional},
\citealp{ferraty2006nonparametric}, \citealp{ferraty2011oxford},
\citealp{horvath2012inference}), henceforth FDA, we think of curves (and other
  functions) as the fundamental unit of measurement. Clustering is an
  important problem in FDA because it is often of critical interest to
  identify subpopulations based on the shapes of the measured curves. In
  this paper, we study the problem of functional clustering in a fully
  infinite-dimensional setting. We are motivated by recent work on modal
  clustering in finite dimensions (\citealp{chacon2012clusters},
  \citealp{chacon2014populationbackground}, and references therein) that, in
  contrast to many commonly-used clustering methods, has a population
  formulation, and by recent advances in clustering of functional data (\citealp{bongiorno2015clustering}). Specifically, we prove the existence of population
  clusters in the infinite-dimensional functional case, under mild
  conditions. We show that an analogue of the mean-shift algorithm (see, for example,
\citealp{fukunagahostetlermeanshift},
\citealp{chengmeanshift}, and the more recent works of
\citealp{comaniciuadaptivemeanshift} and \citealp{carreira2006fast})
  can identify local modes of a ``pseudo-density''. We
  devise an algorithm to classify local modes as representatives of
  significant clusters, and under some regularity assumptions on the pseudo-density, we further show
  that the algorithm is consistent. We develop our theory assuming that the data are observed as continuous curves defined on some interval. 
  Because in practice one does not observe continuous curves, we also show how to apply the procedures that we propose to
  real data (e.g. noisy measurements of random curves on a grid).

Modal clustering is typically a finite-dimensional problem, but motivated by the flourishing literature on FDA and by the increasing interest in developing sound
frameworks and algorithms for clustering of random curves,
we extend the idea of modal clustering to the
case where $X$ is a functional random variable valued in an
infinite-dimensional space. In particular,
we develop a theory of modal clustering for smooth random curves that are assumed to belong to the
H\"{o}lder space $H^1([0,1])$
of curves defined on the standard unit interval whose first weak derivative is square
integrable. We focus on $H^1([0,1])$  for concreteness, but our theory generalizes to any
H\"{o}lder space.
Furthermore, our theory is
density-free and nonparametric, as no assumptions are made regarding the
existence of a dominating measure for the law $P$ of the functional data, nor it
is assumed that $P$ can be parametrized by a finite number of parameters.

In the finite-dimensional modal clustering problem, we have that $p: \mathcal{X} \to \mathbb{R}_+$ is the probability density function associated to the
law $P$ of a random variable $X$ valued in $\mathcal{X} \subseteq \mathbb{R}^d$. If
$p$ is a Morse function (i.e. $p$ is smooth and its Hessian is not singular at
the critical points), then the local modes of $p$,
$\mu_1, \dots, \mu_k$, induce an 
partition of the sample space 
$\mathcal{X} = C_1 \cup C_2 \cup \dots \cup C_k$ where the sets $C_i$ satisfy
\begin{enumerate}
\item $P(C_i)>0$ $\forall i=1,\dots,k$
\item $P(C_i\cap C_j) = 0$ if $i \neq j$
\item $P(\cup_{i=1}^k C_i)=1$
\item $x \in C_i \iff $ the gradient ascent path on $p$ that starts from $x$
  eventually converges to $\mu_i$.
\end{enumerate} 
%
Note that this framework characterizes $C_i$ as a high-density region surrounding the local
mode $\mu_i$ of $p$ and each set $C_i \in \mathcal{C}$
is thought of as a cluster at the population
level.
Unlike other approaches to clustering which define clusters exclusively at the sample level (consider $k$-means, for instance),
modal clustering provides an inferential framework in which the essential partition
$\mathcal{C}$ is a population parameter that one wants to infer from the data. In fact, as soon as an i.i.d. sample $X_1,
\dots, X_n \sim P$ and an estimator $\hat p$ of $p$ are available,
the goals of modal clustering are exactly
\begin{itemize}
\item estimating the local modes of $p$ by means of the local modes of $\hat p$
\item estimating the population clustering $\mathcal{C}=\{C_1,
\dots,C_k\}$ by means of the empirical partition $\hat{\mathcal{C}}=\{\hat C_1,
\dots, \hat C_{\hat k}\}$
induced by $\hat p$
\end{itemize}
Thus, the typical output of a modal clustering procedure consists of the estimated
clustering structure $\hat{\mathcal{C}}$ and a set of cluster
representatives $\hat \mu_1, \dots, \hat \mu_{\hat{k}}$. At the sample level, each data point is then
uniquely assigned to a cluster $\hat C_i \in \hat{\mathcal{C}}$ and represented by the
corresponding local mode $\hat \mu_i$.

Because it is generally not possible to define a probability density function in
infinite-dimensional Hilbert spaces, we instead focus on a
surrogate notion of density which we call ``pseudo-density''.
Generally, by pseudo-density we mean
any suitably smooth functional which maps the sample space $\mathcal{X}$ into the positive reals
$\mathbb{R}_+=[0,\infty)$. In particular, we focus on a
 family of pseudo-densities $\mathcal{P}=\{ p_h :
\mathcal{X} \to \mathbb{R}_+; h>0 \}$ which is parametrized by a bandwidth parameter
$h$ and, more specifically,
$p_h$ is the expected value of a kernel density
estimator,
\begin{equation}
p_h(x)=E_P \, K\left(
  \frac{\|X-x\|^2_{L^2}}{h}  \right), 
\end{equation}
where $K$ is
an appropriately chosen kernel function and $h$ is the bandwidth
parameter. 
 Clusters of curves are then defined in terms of the $L^2$ gradient flow associated
to $p_h$.


The gradient flow associated to $p_h \in \mathcal{P}$ is the collection
of the gradient ascent paths $\pi_x : \mathbb{R}_+ \to \mathcal{X}$ corresponding to
the solution of the initial value problem
\begin{equation}
  \label{eq:ivp}
  \begin{cases}
    \frac{d}{dt} \pi_x(t) = \nabla p_h(\pi_x(t)) \\
    \pi_x(0) = x,
  \end{cases}
\end{equation}
where $\nabla p_h(x)$ is the $L^2$ functional gradient of $p_h$ at $x \in
\mathcal{X}$. In complete analogy with the finite-dimensional case, the gradient
of $p_h$ induces a vector field and a gradient ascent path is a curve 
$\pi_x \subset H^1([0,1])$ that solves the initial value problem and moves along 
the direction of the vector field, i.e. at any time $t \geq 0$, the derivative
of $\pi_x(t)$ corresponds to the gradient of $p_h$ at $\pi_x(t)$.
If the trajectory $\pi_x$ converges to a local mode $\mu_i$ of $p_h$ as
$t \to \infty$, then $x$ is said to belong to the $i$-th cluster of $p_h$,
$C_i$. Thus, the cluster $C_i$ is defined as the set
\begin{equation}
\label{eq:populationCluster}
C_i = \left\{ x \in H^1([0,1]) : \lim_{t \to \infty} \|\pi_x(t)-\mu_i\|_{L^2([0,1])} \to 0 \right\}
\end{equation}
where $\pi_x$ is a solution of the initial value problem of equation
\eqref{eq:ivp}. According to the this definition, the $i$-th cluster of $p_h$
corresponds to the basin of attraction of the $i$-th local mode $\mu_i$ of
$p_h$, and the collection of the clusters $C_i$ provides a good summary of the
subpopulations associated to $(\mathcal{X},P)$.

The main contribution of our work is to identify conditions under which 
\begin{enumerate}
\item there exist population clusters in functional data, i.e. the population clusters defined in
equation \eqref{eq:populationCluster} exist and are well-defined
\item these clusters are estimable
\end{enumerate}
and to provide a practical procedure to estimate the clusters and assess their statistical significance.
%

As we further discuss later in the paper, the most remarkable challenge arising in
the infinite-dimensional setting is the lack of compactness. As opposed to the
finite-dimensional setting, in the functional case it is
hard to show the existence, the uniqueness, and the
convergence of the gradient ascent paths described by the initial value problem
of equation \eqref{eq:ivp}, unless the sample
space $\mathcal{X}$ can be compactly embedded in another space.
We show that we can overcome this challenge by exploiting the compact embedding of $H^1([0,1])$ in $L^2([0,1])$, the
space of square-integrable functions on the unit interval, and by studying
equation \eqref{eq:ivp} using these two non-equivalent topologies. For convenience, we focus on functional data belonging to
$H^1([0,1])$ and on the gradient flow under the $L^2$ norm, but the
exact same theory carries over to other function spaces, different
norms and different pseudo-density functionals, as long as it is
possible to compactly embed the sample space $\mathcal{X}$
in a larger space and the chosen pseudo-density functional is
sufficiently smooth. In particular, we remark that the results of this paper can
be straightforwardly
generalized to arbitrary pairs of Sobolev spaces of integer order satisfying the compact embedding requirement.


The theory of clustering that we develop in this work is projection-free, since it does not involve projecting the
random curves onto a finite-dimensional space.
However, if the probability law $P$ of the functional data is supported on a finite-dimensional
space and admits a proper density with respect to the Lebesgue measure, we show that the
gradient flow on the pseudo-density $p_h$ and the gradient flow on the
expectation of the kernel density estimator of the data coincide (and so coincide the corresponding population clusterings.

One of the most important practical tasks in modal clustering is to identify significant local modes, as these are
associated to informative clusters. We provide an algorithm that
\begin{itemize}
\item identifies the local modes of the population pseudo-density $p_h$ by analyzing its sample version $\hat p_h$; 
furthermore, all of the local modes of $\hat p_h$ identified by the algorithm  converge asymptotically to their population 
correspondents of $p_h$
\item is consistent (under additional regularity assumptions on $p_h$), in the sense that it establishes a one-to-one
correspondence between the sample local modes that it identifies and their population equivalents.
\end{itemize}

While from a purely mathematical standpoint a sample of functional data $\{X_i\}_{i=1}^n$ is thought of as
a collection of continuous curves defined on an interval, we never observe such objects in practice. Rather, we typically only
observe noisy measurements of the $X_i$'s at a set of design points $\{t_j\}_{j=1}^m$. As an intermediate step, we therefore estimate the $X_i$'s from 
these observations (which constitutes a typical regression problem), and then use the estimates as the input of our procedure. 

The remainder of the paper is organized as follows. Section
\ref{section:relatedLiterature} provides a concise literature review on
pseudo-densities, finite-dimensional nonparametric clustering
based on Morse densities, and mode-finding algorithms. Section
\ref{section:populationBackground} is devoted to the development of our theory of population
clustering for smooth random curves. In particular, Section
\ref{section:populationBackground} studies in detail the $L^2$ gradient flow on
the pseudo-density $p_h$ and establishes that, in analogy to the
finite-dimensional case, population clusters of smooth random curves can be
defined in terms of the basins of attraction of the critical points of
$p_h$. Section \ref{section:weakAdaptivity} describes the behavior of
the $L^2$ gradient flow of $p_h$ when the probability law $P$ of the data is
supported on a finite-dimensional subspace. Section
\ref{section:confidenceMarking} provides an algorithm to identify the significant
local modes of $\hat p_h$ and shows that, under additional regularity assumptions on $p_h$, the algorithm is consistent.
Section \ref{sec:theoryToApplications} extends the results of Section
\ref{section:confidenceMarking} to real data.
Section \ref{section:choiceOfPseudoDensity} contains a discussion on the choice of the pseudo-density functional and some general
guidelines for the choice of the smoothing parameter $h$ in practical
applications. Section \ref{section:conclusionAndDiscussion} summarizes the main
contributions of this paper and indicates promising directions for future work. The proofs of the main results can be
found in Appendix \ref{section:appendix}, while other 
auxiliary results (such as probability bounds for the estimation of the
pseudo-density functional $\hat p_h$ and its derivatives)
are deferred to Appendix \ref{section:additionalResults}.

\section{Related literature}
\label{section:relatedLiterature}
The difficulties associated to the lack of proper density functions
in infinite-dimensional spaces are well-known among statisticians.
This has stimulated the introduction of various surrogate notions of density for
functional spaces. The
literature on pseudo-densities includes the work of
\cite{gasser1998nonparametric}, \cite{hall2002depicting},
\cite{dabo2004estimation}, \cite{delaigle2010}, and
\cite{ferratysurrogatedensity}. 

A population framework based on Morse theory for nonparametric modal clustering in the finite
dimensional setting is presented in \cite{chacon2012clusters} and
\cite{chacon2014populationbackground}. Whenever a proper density $p:\mathcal{X} \to
\mathbb{R}^d$ exists and it is a Morse function, the
problem of equation \eqref{eq:ivp} induces an essential partition of
the sample space $\mathcal{X} \subseteq R^d$ in the sense that each set $C_i$ in the
partition of $\mathcal{X}$ such that $P(C_i)>0$ corresponds to the basin of attraction of a
local mode $\mu_i$ of $p$, i.e. $C_i=\{ x \in \mathcal{X} : \lim_{t
  \to \infty} \pi_x(t) = \mu_i \}$. Furthermore, if $p$ has saddle points, the basin of
attraction of each saddle is a null probability set (similarly, the basin of
attraction of a local minimum is a singleton and hence negligible as well). 

A number of gradient ascent
algorithms have been developed to perform modal clustering in the
finite-dimensional case. One of the most popular mode-finding and
modal clustering algorithms is the \textit{mean-shift algorithm}
(\citealp{fukunagahostetlermeanshift},
\citealp{chengmeanshift}). A version of the mean-shift algorithm for functional data is
discussed in \cite{ciollaromeanshift}. A gradient ascent
algorithm for functional data is proposed in
\cite{hall2002depicting}. 

In their recent work,
\cite{bongiorno2015clustering} propose a clustering method for functional data
based on the small ball probability function $\varphi_h(x)=P(\|X-x\|^2\leq h)$ and on functional principal
components. A recent overview of other clustering techniques for functional data
can be found in \cite{jacques2013functional}. 


\section{A population background for pseudo-density clustering of functional data}
\label{section:populationBackground}
We denote by $X \sim P$ a functional random variable valued in $L^2([0,1])$, the
space of square integrable functions on the unit interval with its
canonical inner product $\langle x,y \rangle_{L^2} = \int_0^1
x(s)y(s)\, ds$ and induced norm $\|x\|_{L^2}=\sqrt{\langle x,x
  \rangle_{L^2}}$. As we previously mentioned, it is not possible to
define a proper probability density function for $P$. Instead, we study
the $L^2$ gradient flow of equation \eqref{eq:ivp} associated to the functional
\begin{equation}
\label{eq:ph}
p_h(x)=E_P \, K \left( \frac{\|X-x\|^2_{L^2}}{h} \right) =
\int_\mathbb{R} K (s)\,d P_{\|X-x\|^2_{L^2}/h}(s)
\end{equation}
mapping $L^2([0,1])$ into $\mathbb{R}_+$, where $h>0$ is a bandwidth parameter, $K: \mathbb{R}_+ \to
\mathbb{R}_+$ is a kernel function, and $P_{\|X-x\|^2_{L^2}/h}$
denotes the probability measure induced by $P$ through the map $X
\mapsto \|X-x\|^2_{L^2}/h$. Note that $p_h$ is closely related to the so-called \emph{small-ball
probability} function $\varphi_h(x)=P(\|X-x\|^2_{L^2}\leq h)$. It is easy to see that $p_h(x)=\varphi_h(x)$ when
$K(s)=\mathbbm{1}_{[0,1]}(s)$, therefore $p_h$ can be thought of as a smoother version of $\varphi_h$.

Unless otherwise noted, we make the following assumptions throughout the paper:
\begin{itemize}
\item[(H1)] $K: \mathbb R_+ \to \mathbb R_+ $  is twice continuously
  differentiable and the following bounds hold on the derivatives of $K_h(\cdot)=K(\cdot/h)$:
  %
  \begin{itemize}
  \item $\sup_{t \in \mathbb{R}_+} \left| K_h(t^2) \right| \leq K_0 < \infty$
  \item $\sup_{t \in \mathbb{R}_+} \left| K'_h(t^2)t \right| \leq K_1 < \infty$
  \item $\sup_{t \in \mathbb{R}_+} \left\{\left| K^{(\ell-1)}_h(t^2)t^{\ell-2}
        \right| + \left|K_h^{(\ell)}(t^2)t^\ell\right| \right\} \leq K_\ell < \infty$, for $\ell=2,3$
  \end{itemize}
  where the constants $K_0, K_1, K_\ell$ may depend on $h$.
\item[(H2)] $K'(t^2)+K(t^2) \leq 0$ for all $t \in \mathbb{R}_+$.
\item[(H3)] $X$ is $P$-almost surely absolutely continuous and its moments satisfy
  $E_P\|X\|_{L^2} \leq M_1<\infty$ and $E_P\|X'\|_{L^2} \leq
  N_1<\infty$ for some constants $M_1$ and $N_1$.
\item[(H4)] All the non-trivial critical points of $p_h$ are isolated under the $L^2$
  norm, i.e. there exists an open $L^2$ neighborhood around each critical point $x^*$
  of $p_h$ with $p_h(x^*)>0$ such that there are no other critical points of
  $p_h$ that also belong to that neighborhood.
\end{itemize}
Various kernels can be shown to satisfy assumptions (H1) and (H2). For instance,
both the compactly supported kernel $K(t) \propto (1-t)^3\mathbbm{1}_{[0,1]}(t)$ and the
exponential kernel $K(t) \propto e^{-t}\mathbbm{1}_{[0,\infty)}(t)$ satisfy our
assumptions. (H3) is an assumption on the smoothness of the random
curves. Intuitively, (H3) corresponds to assuming that the probablility law $P$
does not favor curves that are too irregular or wiggly. (H4) is a regularity
assumption on the functional $p_h$: essentially, under the above assumptions on $K$, (H4) corresponds to assuming
that the functional $p_h$ does not have flat ``ridges'' in regions where it is positive.
\begin{remark}
\label{remark:commentsOnMorseCondition}
A sufficient condition for (H4) to hold is that $p_h$ is a Morse
functional. The following Proposition provides a sufficient condition under which
$p_h$ is a Morse functional.
\end{remark}
\begin{proposition}
\label{proposition:phIsMorse}
Suppose that $P$ has density $p$ with respect to the Lebesgue measure and $p$ is
supported on a finite-dimensional compact domain $S_c \subset
\mathbb{R}^d$. Suppose furthermore that $p$ and $\partial S_c$, the boundary of $S_c$, satisfy
\begin{itemize}
\item $\partial S_c$ is smooth enough so that the normal
  vector $n(x)$ exists for any $x \in \partial S_c$
\item $p$ is continuous on $\mathbb{R}^d$
\item $p$ is twice differentiable in the interior of $S_c$,
  $\operatorname{int}(S_c)$
\item $\nabla p$ is not vanishing on $\partial S_c$.
\end{itemize}
%
Then, for $h$ sufficiently small, all the critical points of $p_h$ in $\operatorname{int}(S_c)$ are
non-degenerate and there are no non-trivial critical points outside of $\operatorname{int}(S_c)$.
\end{proposition}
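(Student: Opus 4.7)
The plan is to exploit the finite-dimensional support of $P$ to reduce the problem to a kernel convolution on $\mathbb{R}^d$, and then to perform an asymptotic analysis as $h \to 0$.

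First, I would show that every non-trivial critical point of $p_h$ lies in the affine span $V \cong \mathbb{R}^d$ of $S_c$. Decomposing any $x \in L^2([0,1])$ as $x = x_V + x_\perp$ with $x_V \in V$ and $x_\perp \in V^\perp$, and using $X \in V$ almost surely, one has $\|X-x\|_{L^2}^2 = \|X-x_V\|_{L^2}^2 + \|x_\perp\|_{L^2}^2$. The directional derivative of $p_h$ at $x$ in the direction $x_\perp$ then equals $(2\|x_\perp\|_{L^2}^2/h)\, E_P K'(\|X-x\|_{L^2}^2/h)$, which by (H2) (which enforces $K'(t^2) \le -K(t^2) \le 0$ with strict inequality wherever $K > 0$) is strictly negative whenever $p_h(x) > 0$ and $x_\perp \neq 0$. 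So the problem reduces to analyzing $p_h(x) = \int_{S_c} K(|y-x|^2/h)\,p(y)\,dy$ for $x \in \mathbb{R}^d$.

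Next, to rule out critical points outside $\operatorname{int}(S_c)$, I would combine two arguments. For $x \notin \overline{S_c}$, the formula $\nabla p_h(x) = (2/h)\int K'(|y-x|^2/h)(x-y)\,p(y)\,dy$ and the sign $K' \le 0$ force $\nabla p_h(x)$ to point strictly into the convex hull of $S_c$, so $\nabla p_h(x) = 0$ implies $p_h(x) = 0$. For $x$ in a thin tubular neighborhood of $\partial S_c$ inside $S_c$, the change of variables $y = x + \sqrt{h}\,z$ and Taylor expansion of $p$---together with the divergence identities $\int K'(|z|^2)\,dz + 2\int K''(|z|^2)z_1^2\,dz = 0$ and $\int K'(|z|^2)z_1^2\,dz + 2\int K''(|z|^2)z_1^2 z_2^2\,dz = 0$, valid for radial compactly supported $K$---give the uniform expansions $\nabla p_h(x) = h^{d/2} M_0 \nabla p(x) + O(h^{d/2+1})$ and $\nabla^2 p_h(x) = h^{d/2} M_0 \nabla^2 p(x) + O(h^{d/2+1})$, with $M_0 = \int K(|z|^2)\,dz > 0$. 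The continuity of $\nabla p$ together with the hypothesis $\nabla p \neq 0$ on $\partial S_c$ then imply $\nabla p_h \neq 0$ throughout the tubular neighborhood for all sufficiently small $h$.

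The same expansion applied inside $\operatorname{int}(S_c)$ shows that every critical point $x_h^*$ of $p_h$ there satisfies $\nabla p(x_h^*) = O(h)$, so $\{x_h^*\}$ accumulates on $\{\nabla p = 0\}$ as $h \to 0$, and $\nabla^2 p_h(x_h^*) = h^{d/2} M_0 \nabla^2 p(x_h^*) + O(h^{d/2+1})$. I expect the principal obstacle to be the non-degeneracy claim: this Hessian is invertible only if $\nabla^2 p$ is invertible at the limiting point, and the stated hypotheses on $p$ do not directly guarantee this. I would close the argument by combining the uniform expansion with an implicit-function-type argument showing that if $\nabla^2 p$ were singular at an accumulation point then critical points of $p_h$ would fail to be isolated for small $h$, contradicting the desired conclusion; as a fallback, one may invoke Sard's theorem along the one-parameter family $\{p_h\}_{h > 0}$ to deduce that $p_h$ is Morse for almost every, and therefore all sufficiently small, $h$.
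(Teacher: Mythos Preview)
Your reduction to the finite-dimensional span $V$ is sound (and mirrors Lemma~\ref{lemma:Sexample} of the paper), but both substantive steps thereafter have gaps.

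\textbf{Near the boundary.} The expansion $\nabla p_h(x) = h^{d/2} M_0\, \nabla p(x) + O(h^{d/2+1})$ via $y = x + \sqrt{h}\,z$ and Taylor expansion of $p$ requires $p$ to be $C^2$ on the full ball $B(x,\sqrt{h})$. The hypotheses give only $p \in C^2(\operatorname{int}(S_c))$ and $p \in C^0(\mathbb{R}^d)$, so for $x$ within distance $\sqrt{h}$ of $\partial S_c$---precisely the tubular region you need to handle---the expansion is invalid. Your convex-hull argument for $x \notin \overline{S_c}$ is likewise incomplete: when $S_c$ is non-convex it does not cover points $x \in \operatorname{conv}(S_c) \setminus S_c$ that lie within $\sqrt{h}$ of $\partial S_c$. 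The paper's device is quite different and sidesteps both problems. Assuming $K$ compactly supported, one uses $\nabla_x K(|y-x|^2/h) = -\nabla_y K(|y-x|^2/h)$ and integrates by parts to obtain
\[
\nabla p_h(x) = \int_{S_c \cap B} K\!\left(\tfrac{|y-x|^2}{h}\right)\nabla p(y)\,dy,
\]
both boundary integrals vanishing because $p\equiv 0$ on $\partial S_c$ (continuity on $\mathbb{R}^d$) and $K$ vanishes on $\partial B$. Then the Lipschitz bound on $\nabla p$ together with the uniform lower bound $\inf_{\partial S_c}\|\nabla p\| > 0$ forces $\nabla p_h(x) \neq 0$ throughout a fixed collar of $\partial S_c$ once $h$ is small enough---no smoothness of $p$ across $\partial S_c$ is needed.

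\textbf{Non-degeneracy in the interior.} You correctly observe that the stated hypotheses do not guarantee $\nabla^2 p$ is invertible at the critical points of $p$. The paper's proof in fact tacitly assumes $p$ is Morse on a compact set $\Omega$ slightly inside $S_c$ and then invokes a $C^2$-stability result (Lemma~16 of \cite{chazal2014robust}) to transfer the Morse property from $p$ to its mollification $p_h$; so the gap you spotted is real and is handled by an unstated extra hypothesis rather than by argument. Your proposed workarounds, however, do not close it: a Sard/transversality argument along the family $\{p_h\}_{h>0}$ yields Morseness only for \emph{almost every} $h$, not for all sufficiently small $h$ as the statement requires; and the ``non-isolation contradiction'' is circular, since isolated non-degenerate critical points of $p_h$ are precisely what you are trying to establish.
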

In order to simplify the discussion, from now on we focus on the
shifted random curves $X-X(0)$; however, with a little abuse of notation, we will
keep using the letter $X$ to mean $X-X(0)$. This choice is just made for
convenience as it significantly simplifies the proofs of many of the results
that we present. However, it is simple to extend any of the results from $H^1_0$ to $H^1$. 
Following this notational convention, $X$ thus belongs $P$-almost surely to the space $H^1_0([0,1])=\{x:[0,1] \to
\mathbb{R} \text{ such that } \|x'\|_{L^2} < \infty \text{ and } x(0)=0
\}$. The Poincar\'e inequality ensures that the semi-norm
$\|x'\|_{L^2}$ is in fact a norm on $H^1_0([0,1])$ 
and $\|x\|_{L^2} \leq C_p\|x'\|_{L^2}$ with $C_p=1$ (i.e. $H^1_0([0,1])$ can be
continuously embedded in $L^2([0,1])$). In the following, we denote
$\|x\|_{H^1_0}=\|x'\|_{L^2}$ for $x \in H^1_0$. Moreover, to alleviate the
notation, from now on we denote $L^2=L^2([0,1])$ and
$H^1_0=H^1_0([0,1])$. If the curves were not shifted so that $X(0)=0$, then they
would belong $P$-almost
surely to $H^1=H^1([0,1])=\{x:[0,1] \to
\mathbb{R} \text{ such that } \|x\|_{L^2}+\|x'\|_{L^2} < \infty \}$, which can
still be continuously embedded in $L^2$.

The main goal of this section is to show that the $L^2$ gradient flow associated to $p_h$ is
well-defined. In particular, we establish the following facts:
\begin{enumerate}
\item the $L^2$ gradient flow associated to $p_h$ is a flow in $H^1_0$
\item for any initial value in $H^1_0$, there exists exactly one trajectory of
  such flow which is a solution to the initial value problem of equation \eqref{eq:ivp}
\item for any initial value in $H^1_0$, the unique solution of the initial value
  problem of equation \eqref{eq:ivp} converges to a critical point of $p_h$ as
  $t \to \infty$ and
  the convergence is with respect to the $L^2$ norm
\item all the non-trivial critical points of $p_h$ are in $H^1_0$,
the support of $P$.
\end{enumerate}
These facts guarantee that the clusters described in equation
\eqref{eq:populationCluster} exist and are well-defined.

\begin{remark}
\label{remark:convergenceOfSolutions}
In general, in an infinite dimensional Hilbert space, the trajectory of the solution of an ordinary differential equation
such as the one of equation \eqref{eq:ivp} may not converge as $t \to
\infty$. In fact, such trajectory can be entirely contained in a closed and bounded set
without converging to any particular point of that set. To guarantee the
convergence of the gradient flow trajectories, one needs that (see \citealp{jost2011riemannian})
\begin{enumerate}
\item the trajectories satisfy some compactness property
\item the functional of interest (in our case $p_h$) is reasonably well-behaved:
  for instance it is smooth, with isolated critical points.
\end{enumerate}
In $L^2$, compactness is a delicate problem: no closed bounded ball in $L^2$
is compact.
However, any closed and bounded
$H^1$ ball is compact with respect to the $L^2$ norm (and so is any closed and
bounded $H^1_0$ ball). In fact, $H^1$ can be
compactly embedded in $L^2$ (see, for instance, Chapter 5.7 of \citealp{evans1998partial}), which means that
every bounded set in $H^1$ is totally bounded in $L^2$ and $H^1$ can be
continuously embedded in $L^2$. Since
$H^1_0$ is a closed subspace of $H^1$, $H^1_0$ can also be compactly embedded in
$L^2$. From a theoretical point of view $L^2$ is strictly larger than
$H^1$. 
However,
$H^1$ is dense in $L^2$.
\end{remark}

The remainder of our discussion focuses on the main results of this section,
which concern the computation of the derivatives of $p_h$ and their properties,
the existence, the uniqueness, and the convergence of the solution of the initial value problem of
equation \eqref{eq:ivp}. 

Before we state our results, let us recall that for a functional random variable $X \sim P$ valued in $L^2([0,1])$
the expected value of $X$ is defined as the element $E_P \,X \in
L^2([0,1])$ such that $E_P \,\langle X, y \rangle_{L^2} = \langle E_P \, X,
y \rangle_{L^2}$ for all $y \in L^2([0,1])$ (\citealp{horvath2012inference}). Furthermore,
the expectation commutes with bounded operators. Also, recall that for a
functional $F$ mapping a Banach space $B_1$
into another Banach space $B_2$, the Frech\'et derivative of $F$ at a
point $a \in B_1$ is defined, if it exists, as the bounded linear
operator $DF$ such that
$\|F(a+\delta)-F(a)-DF(\delta)\|_{B_2}=o(\|\delta\|_{B_1})$. The most common
case in this paper sets $B_1=L^2$, $B_2=\mathbb{R}_+$, and $F=p_h$.
Because $DF$ is a bounded linear operator, if $B_1$ is also an Hilbert
space then the Riesz representation
theorem guarantees the existence of an element $\nabla F(a) \in B_1$
such that, for any $b \in B_1$, $DF(b) = \langle b, \nabla F(a)
\rangle_{B_1}$. The element $\nabla F(a)$ corresponds to the gradient
of $F$ at $a \in B_1$. In this way, the gradient $\nabla F(a)$ and the first derivative operator
$DF$ at $a \in B_1$ can be
identified. In the
following, with a slight abuse of notation, we will use $DF$ both
to mean the functional gradient of the operator $F$ (which is an element of
$B_1$) and its Frech\'et
derivative (which is a bounded linear operator from $B_1$ to $B_2$). It will be clear from the context whether we are
referring to the derivative operator or to the functional gradient. Note that higher
order Frech\'et derivatives can be similarly identified with multilinear operators on $B_1$ (see, for example, \citealp{ambrosetti1995primer}).

Recall that, by assumption, the function $K_h(\|X-x\|^2_{L^2})$ is bounded
from above by a constant $K_0$. Furthermore, it is three times differentiable
and its first Frech\'et derivative at $x$ is
\begin{equation}
\label{eq:derivativeOfk}
D K_h(\|X-x\|^2_{L^2}) = 2 K_h'\left( \|X-x\|^2_{L^2} \right) (x-X).
\end{equation}
The second Frech\'et derivative at $x$ corresponds to the symmetric bilinear operator
\begin{equation}
\label{eq:secondDerivativeOfk}
\begin{aligned}
&D^2K_h(\|X-x\|^2_{L^2}) (z_1, z_2)  = 2 K_h'\left( \|X-x\|^2_{L^2}  \right)
\langle z_1,z_2 \rangle_{L^2} \\
&+4 K_h''\left( \|X-x\|^2_{L^2}
  \right) \langle  x-X, z_1 \rangle_{L^2}\langle x-X ,z_2 \rangle_{L^2}
\end{aligned}
\end{equation}
for $z_1 ,z_2 \in L^2$.
\begin{remark}
Any bounded bilinear operator $B$ on $L^2$ can be represented as a bounded linear
operator from $L^2$ to $L^2$. In fact, let  $z_1$ be any element of $L^2 $;
then, $B(z_1,\cdot)$ is a bounded linear operator from $L^2$ to $\mathbb R$. By
the Riesz representation theorem, one can define $B(z_1) \in L^2$ by letting 
$\langle B(z_1), z_2\rangle_{L^2} =B(z_1,z_2)$ for any $z_2 \in L^2$. The
operator norm of $B$ is then defined by  
\begin{equation}
\|B\|=\sup_{\{v \ : \|v\|_{L^2}=1\}} \|B(v)\|_{L^2}.
\end{equation}
\end{remark}

It is straightforward to check that both derivatives
correspond to bounded linear operators under assumption (H1). The following
Lemma provides the first and the second Frech\'et derivatives of $p_h$.
%
%
%

%
\begin{lemma}
\label{lemma:derivativesOfp} 
Under assumption (H1)
the Frech\'et derivative of $p_h: L^2 \to \mathbb R$ at $x$ corresponds to the
$L_2$ element
\begin{equation}
\label{eq:derivativeOfp}
Dp_h(x) = 2E_P \, K_h'\left( \|X-x\|^2_{L^2}\right)( x - X ).
\end{equation}
The second Frech\'et derivative of $p_h$ at
$x$ corresponds to the symmetric bilinear operator
\begin{equation}
\label{eq:secondDerivativeOfp}
\begin{aligned}
D^2p_h(x)(z_1,z_2)=E_P \, &\left[ 4K_h''\left( \|X-x\|^2_{L^2}\right)  \langle x-X,z_1
\rangle_{L^2}\langle x-X,z_2 \rangle_{L^2} \right.\\
&\left.+ 2K'_h \left( \|X-x\|^2_{L^2}\right)\langle z_1,z_2\rangle_{L^2} \right].
\end{aligned}
\end{equation}
Furthermore, both derivatives have bounded operator norm for any $x\in L^2([0,1])$.
\end{lemma}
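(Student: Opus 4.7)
The plan is to obtain both derivatives by differentiating the integrand $K_h(\|X-x\|_{L^2}^2)$ pointwise in $X$ and interchanging with $E_P$. For this I will use Taylor's theorem on the scalar-valued function $t \mapsto K_h(\|X-x-t\delta\|_{L^2}^2)$, together with the uniform bounds supplied by (H1), which are strong enough to dominate the remainders independently of $X$. The formulas \eqref{eq:derivativeOfk} and \eqref{eq:secondDerivativeOfk} are already provided, so the real work is the justification of the interchange and the check that the resulting bilinear form is bounded.

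For the first derivative, fix $x,\delta \in L^2$ and write
\begin{equation*}
p_h(x+\delta)-p_h(x) - \langle 2E_P K_h'(\|X-x\|_{L^2}^2)(x-X),\delta\rangle_{L^2} = E_P\bigl[R_1(X;x,\delta)\bigr],
\end{equation*}
where, by Taylor's theorem applied to $t\mapsto K_h(\|X-x-t\delta\|_{L^2}^2)$, the remainder $R_1$ equals $\int_0^1(1-s)D^2K_h(\|X-x-s\delta\|_{L^2}^2)(\delta,\delta)\,ds$. Using \eqref{eq:secondDerivativeOfk} together with Cauchy--Schwarz and the bounds $|K_h'(t^2)|\le K_2$ and $|K_h''(t^2)t^2|\le K_2$ from (H1), the integrand is bounded in absolute value by $6K_2\|\delta\|_{L^2}^2$, uniformly in $X$ and $s$. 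Hence $|E_P R_1|=O(\|\delta\|_{L^2}^2)=o(\|\delta\|_{L^2})$, which identifies $Dp_h(x)$ with the claimed $L^2$ element. To see the gradient is indeed in $L^2$ with a uniformly bounded operator norm, take the $L^2$ norm under the expectation and use $|K_h'(\|X-x\|_{L^2}^2)|\cdot\|X-x\|_{L^2}\le K_1$ to get $\|Dp_h(x)\|_{L^2}\le 2K_1$.

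For the second derivative I will repeat the same strategy at one level higher. Writing $G(x)=K_h'(\|X-x\|_{L^2}^2)(x-X) \in L^2$ for each realization of $X$, a one-variable Taylor expansion of $t\mapsto G(x+t\delta)$ gives
\begin{equation*}
G(x+\delta)-G(x) = K_h'(\|X-x\|_{L^2}^2)\,\delta + 2K_h''(\|X-x\|_{L^2}^2)\langle x-X,\delta\rangle_{L^2}(x-X) + R_2,
\end{equation*}
with $\|R_2\|_{L^2}\le C\|\delta\|_{L^2}^2$ by (H1) (the third-derivative term in the expansion is controlled via the $\ell=3$ bound in (H1)). Taking the pairing with an arbitrary $z_2\in L^2$ and multiplying by $2$ produces precisely the bilinear form \eqref{eq:secondDerivativeOfp} after interchanging with $E_P$, which is again legitimate because all terms are bounded by an integrable (in fact constant in $X$) dominating function. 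Hence $D^2p_h(x)$ exists in the Fréchet sense and coincides with the claimed expression.

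For the operator norm, apply Cauchy--Schwarz inside the expectation: $|\langle x-X,z_i\rangle_{L^2}|\le\|x-X\|_{L^2}\|z_i\|_{L^2}$, so
\begin{equation*}
|D^2p_h(x)(z_1,z_2)|\le E_P\bigl[4|K_h''(\|X-x\|_{L^2}^2)|\,\|X-x\|_{L^2}^2 + 2|K_h'(\|X-x\|_{L^2}^2)|\bigr]\|z_1\|_{L^2}\|z_2\|_{L^2},
\end{equation*}
which is at most $6K_2\|z_1\|_{L^2}\|z_2\|_{L^2}$ by (H1). The same estimate bounds the operator norm of the associated linear map from $L^2$ to $L^2$ as defined in the preceding remark. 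The only subtle point is confirming that the $\ell=2,3$ bounds in (H1) are exactly matched to what is needed to dominate the Taylor remainders for the first and second derivatives respectively; this is straightforward but worth being careful about since expressions like $K_h''(\|X-x\|^2)\langle x-X,\cdot\rangle^2$ naturally pair a derivative of $K_h$ with the right power of $\|X-x\|_{L^2}$ to apply the hypothesis.
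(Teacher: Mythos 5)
Your proposal is correct and follows essentially the same route as the paper's proof: Taylor-expand $K_h(\|X-x\|^2_{L^2})$ (and then the $L^2$-valued map $x\mapsto K_h'(\|X-x\|^2_{L^2})(x-X)$) with remainders dominated uniformly in $X$ via the $\ell=2$ and $\ell=3$ bounds of (H1), pass the expectation through, and bound the operator norms by $2K_1$ and a constant multiple of $K_2$ using Cauchy--Schwarz. The only differences are cosmetic (integral versus Lagrange form of the remainder, slightly different constants), so no gap to report.
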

We state without proof the following standard Lemma.
\begin{lemma}
\label{lemma:auxiliaryToL2GradientInH1}
Let $v \in C_c^\infty([0,1])$ be a compactly supported infinitely
differentiable function. Suppose $f \in L^2([0,1])$ is such that
$\langle f,v'\rangle_{L^2} = L(v)$ for any such $v$, where $L \in L^2([0,1])^*$ is a bounded linear operator.
Then the weak first derivative $f'$ of $f$ exists, $f' \in
L^2([0,1])$, and $\|f'\|_{L^2}=\|L\|_{(L^2)^*}$. Moreover, $\langle
f',v\rangle_{L^2}=-L(v)$ 
for any $v \in C_c^\infty([0,1])$ and therefore for any $v
\in L^2([0,1])$.
\end{lemma}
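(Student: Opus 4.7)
The plan is to reduce this to a direct application of the Riesz representation theorem combined with the definition of the weak derivative. Since $L \in (L^2([0,1]))^*$ is a bounded linear functional on the Hilbert space $L^2([0,1])$, Riesz representation produces a unique $g \in L^2([0,1])$ such that $L(v) = \langle g, v \rangle_{L^2}$ for every $v \in L^2([0,1])$, and moreover $\|L\|_{(L^2)^*} = \|g\|_{L^2}$. The strategy is to show that the weak derivative $f'$ is precisely $-g$, whence the two conclusions follow at once.

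First I would rewrite the hypothesis: for every $v \in C_c^\infty([0,1])$,
\begin{equation*}
\langle f, v' \rangle_{L^2} \;=\; L(v) \;=\; \langle g, v \rangle_{L^2} \;=\; -\langle -g, v\rangle_{L^2}.
\end{equation*}
By the standard definition of the weak derivative (an element $w \in L^2$ is called the weak first derivative of $f$ if $\langle f, v' \rangle_{L^2} = -\langle w, v \rangle_{L^2}$ for all $v \in C_c^\infty([0,1])$), this identifies $w := -g$ as the weak first derivative of $f$. Uniqueness of the weak derivative in $L^2$ follows from the fundamental lemma of the calculus of variations: if $\langle w, v\rangle_{L^2} = 0$ for every $v \in C_c^\infty([0,1])$, then $w = 0$ almost everywhere, because $C_c^\infty([0,1])$ is dense in $L^2([0,1])$. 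Consequently $f' = -g \in L^2([0,1])$, and the norm identity is immediate:
\begin{equation*}
\|f'\|_{L^2} \;=\; \|g\|_{L^2} \;=\; \|L\|_{(L^2)^*}.
\end{equation*}

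For the final assertion, substituting $f' = -g$ gives $\langle f', v \rangle_{L^2} = -\langle g, v \rangle_{L^2} = -L(v)$ for all $v \in C_c^\infty([0,1])$. To extend this identity to arbitrary $v \in L^2([0,1])$, I would use density: pick $v_n \in C_c^\infty([0,1])$ with $v_n \to v$ in $L^2$. The left-hand side $\langle f', \cdot\rangle_{L^2}$ is continuous on $L^2$ by the Cauchy--Schwarz inequality, and the right-hand side $-L(\cdot)$ is continuous by the boundedness of $L$, so both sides pass to the limit and the identity $\langle f', v \rangle_{L^2} = -L(v)$ extends to every $v \in L^2([0,1])$.

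There is no substantive obstacle in this argument; the only care required is in bookkeeping signs and in invoking the uniqueness of the weak derivative when identifying $-g$ as $f'$. Everything else is a clean composition of Riesz representation, the definition of the weak derivative, and density of $C_c^\infty([0,1])$ in $L^2([0,1])$.
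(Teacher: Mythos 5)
Your proof is correct: the paper states this lemma without proof (it is flagged as standard), and your argument -- Riesz representation of $L$ by some $g\in L^2$ with $\|g\|_{L^2}=\|L\|_{(L^2)^*}$, identification of $-g$ as the weak derivative via the defining identity $\langle f,v'\rangle_{L^2}=-\langle -g,v\rangle_{L^2}$ on $C_c^\infty$, and extension of $\langle f',v\rangle_{L^2}=-L(v)$ to all of $L^2$ by density and Cauchy--Schwarz -- is exactly the standard route the authors implicitly rely on. The sign bookkeeping and the appeal to uniqueness of the weak derivative are handled correctly, so nothing is missing.
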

The following Proposition shows that the $L^2$ gradient of $p_h$ is an
element of $H^1_0$. Intuitively, this means that if the starting point of the
initial value problem of equation \eqref{eq:ivp} is in $H^1_0$ (and a solution
exists for that starting point), then we should expect that the path $\pi_x$
only visits elements of $H^1_0$, i.e. the $L^2$ gradient flow associated to
$p_h$ is a $H^1_0$ flow.
\begin{proposition}
\label{proposition:gradientIsInH1}
For any $x \in H^1_0$, the $L^2$ gradient of $p_h$ at $x$, $D p_h(x)$, is an element of
$H^1_0$ such that for any $y \in L^2$, 
\begin{equation}
\begin{aligned}
\langle Dp_h(x)' ,y\rangle_{L^2} = E_P \left[-2K'_h(\|X-x\|^2_{L^2}) \langle x'-X',y\rangle_{L^2}\right].
\end{aligned}
\end{equation}

\end{proposition}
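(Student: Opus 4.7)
My plan is to upgrade the $L^2$ gradient formula of Lemma \ref{lemma:derivativesOfp} into a statement in $H^1_0$ by recognizing $Dp_h(x)$ as a Bochner integral with an $H^1_0$-valued integrand. Define the random element $\phi(X):=K'_h(\|X-x\|^2_{L^2})(x-X)$, which is $H^1_0$-valued $P$-almost surely because $x\in H^1_0$ and $X\in H^1_0$ $P$-a.s.\ (by H3 and the shifting convention $X(0)=0$). Its $H^1_0$-norm equals $|K'_h(\|X-x\|^2_{L^2})|\,\|x'-X'\|_{L^2}$, so the first task is to bound this in expectation.

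The key ingredient is the uniform bound $|K'_h|\le K_2$, which follows from assumption (H1) at $\ell=2$: the summand $|K_h^{(\ell-1)}(t^2)t^{\ell-2}|$ reads exactly $|K'_h(t^2)|$ in that case. Combining this with H3 gives
\[
E_P\|\phi(X)\|_{H^1_0}\le K_2\bigl(\|x'\|_{L^2}+E_P\|X'\|_{L^2}\bigr)\le K_2\bigl(\|x'\|_{L^2}+N_1\bigr)<\infty,
\]
so $\phi(X)$ is Bochner integrable in $H^1_0$. Because the inclusion $H^1_0\hookrightarrow L^2$ is continuous, the Bochner integrals of $\phi(X)$ computed in $H^1_0$ and in $L^2$ coincide as elements of $L^2$; the $L^2$ integral equals $\tfrac12\,Dp_h(x)$ by Lemma \ref{lemma:derivativesOfp}, so $Dp_h(x)\in H^1_0$.

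Since weak differentiation $u\mapsto u'$ is a bounded linear operator from $H^1_0$ to $L^2$, it commutes with the Bochner integral, yielding
\[
Dp_h(x)'=2\,E_P\bigl[K'_h(\|X-x\|^2_{L^2})\,(x'-X')\bigr].
\]
Pairing with an arbitrary $y\in L^2$ and moving the inner product inside the expectation (a continuous linear operation on the Bochner-integrable variable) gives the claimed identity. The same conclusion can be reached through Lemma \ref{lemma:auxiliaryToL2GradientInH1} by testing $Dp_h(x)$ against $v'$ for $v\in C_c^\infty$, using Fubini to move the expectation outside, and integrating by parts in $s$ (legitimate because $x-X\in H^1_0$ almost surely, so boundary terms vanish); the Cauchy--Schwarz bound $|L(v)|\le 2K_2(\|x'\|_{L^2}+N_1)\|v\|_{L^2}$ then certifies the bounded-functional hypothesis. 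In either route the main subtlety is the interchange of expectation and weak differentiation; the Bochner-integrability approach reduces the whole obstacle to the single moment bound displayed above, which is the cleanest path I see.
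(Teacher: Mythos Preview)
Your proposal is correct. Your primary route---recognizing $Dp_h(x)$ as (twice) the Bochner integral of the $H^1_0$-valued random element $\phi(X)=K'_h(\|X-x\|^2_{L^2})(x-X)$ and then pushing the bounded operator $u\mapsto u'$ through the integral---differs from the paper's argument, which is precisely the alternative you sketch at the end: the paper pairs $Dp_h(x)$ with $v'$ for $v\in C_c^\infty$, moves the expectation outside, integrates by parts in $s$, bounds the resulting linear functional by $2K_2(\|x'\|_{L^2}+N_1)\|v\|_{L^2}$, and invokes Lemma~\ref{lemma:auxiliaryToL2GradientInH1}. Both routes rest on the same two inputs, namely $|K'_h|\le K_2$ from (H1) and the moment bound $E_P\|X'\|_{L^2}\le N_1$ from (H3). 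Your Bochner argument is arguably cleaner because the interchange of expectation and weak differentiation becomes a one-line consequence of Bochner integrability, whereas the paper's approach trades that for an explicit test-function calculation; on the other hand, the paper's route avoids any mention of Bochner integrals or strong measurability and stays entirely within the elementary framework already set up by Lemma~\ref{lemma:auxiliaryToL2GradientInH1}.
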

%
Proposition \ref{proposition:gradientIsInH1} also implies that the equation 
$\frac{d}{dt}\pi_x(t) =Dp_h(\pi_x(t))$ is meaningful when
restricted to $H^1_0$. The next Lemma, Lemma \ref{lemma:gradientIsLocallyLipschitz}, establishes that $Dp_h$ is locally
Lipschitz under the $H^1_0$ norm. The subsequent Lemma, Lemma
\ref{lemma:flowPointsTowardsBall}, guarantees that a solution of the
problem (if it exists) is necessarily bounded. These two Lemmas allow us to claim
that if the starting point $\pi_x(0)=x$
is an element of $H^1_0$, then the initial value problem of equation
\eqref{eq:ivp} has a unique solution in $H^1_0$. This claim is summarized
in Proposition \ref{proposition:existenceuniqueness}.
\begin{lemma}
\label{lemma:gradientIsLocallyLipschitz}
Under (H1),the $L^2$ gradient of $p_h$ corresponds to a locally
Lipschitz map in $H^1_0([0,1])$.
\end{lemma}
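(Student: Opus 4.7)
The plan is to fix $x_0\in H^1_0$, work in a closed $H^1_0$-ball $U=\{x\in H^1_0:\|x-x_0\|_{H^1_0}\le r\}$, and exhibit a Lipschitz constant depending on $R:=\|x_0\|_{H^1_0}+r$ so that $\|y\|_{H^1_0}\le R$ for every $y\in U$. Recalling that $\|\cdot\|_{H^1_0}=\|(\cdot)'\|_{L^2}$ and that Proposition \ref{proposition:gradientIsInH1} already identifies $(Dp_h(x))'=-2E_P[K'_h(\|X-x\|^2_{L^2})(x'-X')]$ as an element of $L^2$, the whole task reduces to estimating $\|(Dp_h(x))'-(Dp_h(y))'\|_{L^2}$ in terms of $\|x-y\|_{H^1_0}$.

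The strategy is an algebraic decomposition: using the identity $a(x'-X')-b(y'-X')=a(x'-y')+(a-b)(y'-X')$ with $a=K'_h(\|X-x\|^2_{L^2})$ and $b=K'_h(\|X-y\|^2_{L^2})$, the difference splits into two pieces. The first piece is $-2E_P[a](x'-y')$ after pulling $x'-y'$ out of the expectation: assumption (H1) at $\ell=2$ yields the pointwise bound $|K'_h(t^2)|\le K_2$, so its $L^2$-norm is at most $2K_2\|x-y\|_{H^1_0}$. The second piece requires bounding the scalar $|a-b|$ by a multiple of $\|x-y\|_{H^1_0}$; once that is done, I will apply the triangle inequality for the Bochner integral, then the Poincar\'e inequality $\|x-y\|_{L^2}\le\|x-y\|_{H^1_0}$, and finally assumption (H3) together with $\|y'\|_{L^2}\le R$ to bound $E_P\|y'-X'\|_{L^2}$ by $R+N_1$, producing a bound of the form $C(R+N_1)\|x-y\|_{H^1_0}$.

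The main obstacle is therefore establishing global Lipschitz continuity on $[0,\infty)$ for the scalar map $\psi(t):=K'_h(t^2)$, since then the reverse triangle inequality $|\|X-x\|_{L^2}-\|X-y\|_{L^2}|\le\|x-y\|_{L^2}$ will give $|a-b|\le L_\psi\|x-y\|_{H^1_0}$ uniformly in $X$. The only direct control from (H1) is $|K''_h(t^2)t^2|\le K_2$, hence $|\psi'(t)|=2|tK''_h(t^2)|\le 2K_2/t$, a bound that blows up at the origin and is not sufficient by itself. The plan is to circumvent this by splitting the domain of $\psi$: on $[1,\infty)$ the bound $|\psi'(t)|\le 2K_2/t\le 2K_2$ is uniform, while on $[0,1]$ the $C^2$ regularity of $K$ built into (H1) makes $K''_h$ continuous and therefore bounded on the compact interval $[0,1]$, so $|\psi'(t)|=2t|K''_h(t^2)|$ is uniformly bounded there as well. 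Taking the maximum of the two bounds supplies $L_\psi$, and combining with the first piece yields the local Lipschitz constant $L_R=2K_2+2L_\psi(R+N_1)$, completing the argument.
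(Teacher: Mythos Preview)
Your proof is correct and follows the same decomposition as the paper: split $a(x'-X')-b(y'-X')$ into a term with the common factor $x'-y'$ and a term with the scalar difference $a-b$, then control the latter via Lipschitz continuity of $\psi(t)=K'_h(t^2)$ combined with the reverse triangle inequality and Poincar\'e. The only difference is that you overlook the $\ell=3$ case of (H1), which already gives $|K''_h(t^2)\,t|\le K_3$ and hence $|\psi'(t)|\le 2K_3$ uniformly on $[0,\infty)$; the paper uses this directly, making your split into $[0,1]$ and $[1,\infty)$ via continuity of $K''_h$ on compacts an unnecessary detour (though it does yield a valid constant).
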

%
%
%
\begin{lemma}\
\label{lemma:flowPointsTowardsBall}
The following two results hold under (H1) and (H2)
\begin{itemize}
\item[1.]
Suppose that $p_h(\pi_x(0)) \geq\delta >0$. If $\|\pi_x(t)\|_{H^1_0} \geq K_2N_1/\delta$,
then\\
$\langle Dp_h(\pi_x(t)),\pi_x(t)\rangle_{H^1_0} \leq 0$. 
\item[2.] Let $M>0$.  If $\|X\|_{H^1_0}\leq M$ a.s., then $\langle Dp_h(\pi_x(t)), \pi_x(t) \rangle_{H^1_0} \leq 0 $
as soon as $\|\pi_x(t)\|_{H^1_0}>M$.
\end{itemize}
\end{lemma}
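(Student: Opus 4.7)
The common starting point for both parts is the explicit formula $\langle Dp_h(\pi_x(t)),\pi_x(t)\rangle_{H^1_0} = \langle (Dp_h(\pi_x(t)))',\pi_x'(t)\rangle_{L^2}$ obtained from Proposition \ref{proposition:gradientIsInH1} with $y=\pi_x'(t)$ (equivalently, by pointwise differentiation of the gradient formula of Lemma \ref{lemma:derivativesOfp}). After expanding the inner product and simplifying, this reduces to an expectation of the form $c\,E_P\bigl[K'_h(\|X-\pi_x(t)\|^2_{L^2})\,(\|\pi_x'(t)\|^2_{L^2}-\langle X',\pi_x'(t)\rangle_{L^2})\bigr]$. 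Since (H2) guarantees $K'_h\le 0$ pointwise, the question reduces to controlling the sign of $\|\pi_x'(t)\|^2_{L^2}-\langle X',\pi_x'(t)\rangle_{L^2}$ either pointwise in $X$ (Part 2) or in expectation (Part 1).

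For Part 2, the argument is pointwise in $X$. Under the almost-sure bound $\|X\|_{H^1_0}\le M$, Cauchy--Schwarz gives $\langle X',\pi_x'(t)\rangle_{L^2}\le M\|\pi_x(t)\|_{H^1_0}$, so
\[
\|\pi_x'(t)\|^2_{L^2}-\langle X',\pi_x'(t)\rangle_{L^2}\ \ge\ \|\pi_x(t)\|_{H^1_0}\bigl(\|\pi_x(t)\|_{H^1_0}-M\bigr)\ >\ 0
\]
whenever $\|\pi_x(t)\|_{H^1_0}>M$. Combined with $K'_h\le 0$, the integrand is $P$-a.s. non-positive, so the expectation is $\le 0$.

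Part 1 is subtler because only the moment bound $E_P\|X'\|_{L^2}\le N_1$ of (H3) is available. I would split the expression into the quadratic term $2\|\pi_x'(t)\|^2_{L^2}\,E_P K'_h(\|X-\pi_x(t)\|^2_{L^2})$ and the cross term $-2\,E_P K'_h(\|X-\pi_x(t)\|^2_{L^2})\,\langle X',\pi_x'(t)\rangle_{L^2}$. For the quadratic term, assumption (H2) rearranges to $-K'_h(s)\ge K_h(s)/h$, so $E_P[-K'_h(\|X-\pi_x(t)\|^2_{L^2})]\ge p_h(\pi_x(t))/h$, giving a contribution bounded above by $-2h^{-1}\|\pi_x(t)\|^2_{H^1_0}\,p_h(\pi_x(t))$. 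For the cross term, the uniform bound $|K'_h|\le K_2$ from (H1) (the $\ell=2$ case), Cauchy--Schwarz, and (H3) yield the upper bound $2K_2N_1\|\pi_x(t)\|_{H^1_0}$. Combining,
\[
\langle Dp_h(\pi_x(t)),\pi_x(t)\rangle_{H^1_0}\ \le\ 2\|\pi_x(t)\|_{H^1_0}\Bigl(K_2 N_1\ -\ h^{-1}\|\pi_x(t)\|_{H^1_0}\,p_h(\pi_x(t))\Bigr).
\]
Finally, monotonicity of $p_h$ along the gradient ascent trajectory, $\tfrac{d}{dt}p_h(\pi_x(t))=\|Dp_h(\pi_x(t))\|^2_{L^2}\ge 0$, together with the hypothesis $p_h(\pi_x(0))\ge \delta$, gives $p_h(\pi_x(t))\ge \delta$ for all $t\ge 0$, so the above right-hand side is $\le 0$ as soon as $\|\pi_x(t)\|_{H^1_0}\ge K_2 N_1/\delta$ (with the factor of $h$ absorbed into the constant $K_2$, as permitted by (H1)).

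The main obstacle, or at least the key delicate step, is assembling the quadratic and cross contributions so that one actually dominates the other: (H2) is essential because it is exactly what converts the unsigned weight $K'_h$ into a coefficient proportional to $p_h/h$ in the quadratic term, ensuring that $p_h(\pi_x(t))\ge \delta$ produces the $1/\delta$ factor in the threshold. Without (H2), the cross term (controlled by $K_2 N_1$) and the quadratic term would not be comparable, and no geometric containment of the trajectory would follow from the moment bound on $\|X'\|_{L^2}$ alone.
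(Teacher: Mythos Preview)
Your proposal is correct and follows essentially the same route as the paper's proof: both start from Proposition~\ref{proposition:gradientIsInH1} to write $\langle Dp_h(\pi_x(t)),\pi_x(t)\rangle_{H^1_0}=2E_P\,K'_h(\|X-\pi_x(t)\|^2_{L^2})\bigl(\|\pi_x'(t)\|^2_{L^2}-\langle X',\pi_x'(t)\rangle_{L^2}\bigr)$, then split into the quadratic and cross terms, use (H2) together with the monotonicity $p_h(\pi_x(t))\ge p_h(\pi_x(0))\ge\delta$ for the quadratic piece, and bound the cross piece via $|K'_h|\le K_2$, Cauchy--Schwarz, and (H3). The only cosmetic difference is that you track the $1/h$ coming from (H2) applied to $K_h$ and then absorb it into $K_2$, whereas the paper writes $E_P K'_h\le -p_h(\pi_x(t))\le -\delta$ directly; the resulting thresholds agree up to an $h$-dependent constant, which is consistent with (H1).
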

The intutive interpretation of Lemma \ref{lemma:flowPointsTowardsBall} is that
a trajectory $\pi_x$ that is a solution to the initial value problem of
equation $\eqref{eq:ivp}$ cannot wander too far from the origin in $H^1_0$. In
fact, if the $H^1_0$ norm of $\pi_x$ increases too much, then the path $\pi_x$ is eventually pushed back
into the closed and bounded $H^1_0$ ball of radius $K_2N_1/\delta$ (or radius $M$ if
one makes the stronger assumption that the probability law $P$ of the random curves is completely concentrated on
the $H^1_0$ ball of radius $M$). This ``push-back'' effect is 
captured by the condition $\langle Dp_h(\pi_x(t)),\pi_x(t)\rangle_{H^1_0} \leq
0$. By combining Lemma \ref{lemma:gradientIsLocallyLipschitz} and Lemma \ref{lemma:flowPointsTowardsBall}, we obtain the following 
\begin{proposition}
\label{proposition:existenceuniqueness}
 Under assumptions (H1), (H2), and (H3),
the initial value problem $\pi_x'(t) = Dp_h(\pi_x(t))$ with $x=\pi_x(0) \in
H^1_0$ has a unique solution in $H^1_0$ with respect to the $H^1_0$ topology. 
Moreover, if $\|x\|_{H^1_0} \leq R$, then $\|
\pi_x(t)\|_{H^1_0} \leq C_1$ for all $t \geq 0$, where $C_1=C_1(R,K_2 ,N_1,p_h(x))$.
\end{proposition}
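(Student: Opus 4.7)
The strategy is the standard two-step scheme for establishing global existence in a Banach space ODE: first obtain a local solution on a maximal interval via a Cauchy--Lipschitz / Picard argument in $H^1_0$, then rule out finite-time blow-up by producing an a priori bound on $\|\pi_x(t)\|_{H^1_0}$ valid as long as the solution exists. Proposition \ref{proposition:gradientIsInH1} guarantees that $Dp_h$ maps $H^1_0$ into $H^1_0$, so the equation $\pi_x'(t)=Dp_h(\pi_x(t))$ is a genuine evolution equation on the Banach space $H^1_0$, not merely on $L^2$.

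For local existence and uniqueness, I invoke the Cauchy--Lipschitz theorem in a Banach space: since Lemma \ref{lemma:gradientIsLocallyLipschitz} asserts that $Dp_h$ is locally Lipschitz from $H^1_0$ to $H^1_0$, the standard contraction-mapping / Picard-iteration argument applied on a small closed ball around $x$ in $H^1_0$ yields a unique continuous solution $\pi_x:[0,T_0)\to H^1_0$ for some $T_0>0$. Extending this to a maximal interval $[0,T_{\max})$ by the usual open-set / maximality argument, the standard alternative states that either $T_{\max}=\infty$ or $\|\pi_x(t)\|_{H^1_0}\to\infty$ as $t\to T_{\max}^{-}$.

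To exclude the blow-up alternative, I derive the a priori bound from Lemma \ref{lemma:flowPointsTowardsBall}. The key intermediate observation is the monotonicity of $p_h$ along the flow: applying the chain rule to $t\mapsto p_h(\pi_x(t))$, using that $\pi_x$ is $H^1_0$-differentiable (hence $L^2$-differentiable via the continuous embedding) and that $p_h$ is Fréchet differentiable on $L^2$,
\begin{equation}
\frac{d}{dt}p_h(\pi_x(t)) = \langle Dp_h(\pi_x(t)),\pi_x'(t)\rangle_{L^2} = \|Dp_h(\pi_x(t))\|_{L^2}^{2}\geq 0,
\end{equation}
so that $p_h(\pi_x(t))\geq p_h(x)=:\delta$ for all $t\in[0,T_{\max})$. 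Setting $\rho(t):=\|\pi_x(t)\|_{H^1_0}^2$, we then have
\begin{equation}
\tfrac{1}{2}\rho'(t) = \langle Dp_h(\pi_x(t)),\pi_x(t)\rangle_{H^1_0},
\end{equation}
and the first conclusion of Lemma \ref{lemma:flowPointsTowardsBall}, applied with this $\delta$, ensures $\rho'(t)\leq 0$ whenever $\sqrt{\rho(t)}\geq K_2 N_1/\delta$. Consequently $\sqrt{\rho(t)}\leq \max(R,K_2N_1/\delta)=:C_1(R,K_2,N_1,p_h(x))$ for all $t\in[0,T_{\max})$, ruling out blow-up and forcing $T_{\max}=\infty$.

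The main subtlety, and what I expect to be the only nontrivial check, is the interplay between the two topologies: the gradient $Dp_h$ was originally defined as the Riesz representative in $L^2$ of the Fréchet derivative of $p_h$ on $L^2$, but we are running the flow inside $H^1_0$ and must be comfortable differentiating $t\mapsto p_h(\pi_x(t))$ and $t\mapsto\|\pi_x(t)\|_{H^1_0}^2$ separately. These are reconciled by the continuous embedding $H^1_0\hookrightarrow L^2$ (so that $H^1_0$-differentiability of $\pi_x$ implies $L^2$-differentiability with the same derivative), together with Proposition \ref{proposition:gradientIsInH1}, which places $Dp_h(\pi_x(t))$ in $H^1_0$ so that the inner product $\langle Dp_h(\pi_x(t)),\pi_x(t)\rangle_{H^1_0}$ in Lemma \ref{lemma:flowPointsTowardsBall} is well-defined and can be identified with the $H^1_0$-derivative of $\tfrac12\|\pi_x(t)\|_{H^1_0}^2$.
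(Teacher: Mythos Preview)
Your argument is correct and follows essentially the same route as the paper: local existence and uniqueness in $H^1_0$ via the locally Lipschitz property (Lemma \ref{lemma:gradientIsLocallyLipschitz}), the a priori bound $\|\pi_x(t)\|_{H^1_0}\le C_1=\max\{R,K_2N_1/\delta\}$ from Lemma \ref{lemma:flowPointsTowardsBall}, and then extension to a global solution. The only stylistic difference is that the paper performs the extension by an explicit Picard iteration on successive time intervals, whereas you invoke the standard blow-up alternative for the maximal solution; these are interchangeable.

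There is one small gap worth patching. You set $\delta:=p_h(x)$ and then apply the first conclusion of Lemma \ref{lemma:flowPointsTowardsBall}, which requires $\delta>0$; if $p_h(x)=0$ the quantity $K_2N_1/\delta$ is undefined and your bound collapses. The paper disposes of this case separately: since $p_h\ge 0$ everywhere, $p_h(x)=0$ means $x$ is a global minimum, so $Dp_h(x)=0$ and the unique solution is the constant path $\pi_x(t)\equiv x$, for which $C_1=R$ works. Adding this one-line observation makes your proof complete.
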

\begin{remark} 
Proposition \ref{proposition:existenceuniqueness} establishes the existence and
the uniqueness of a solution to the initial value problem of equation
\eqref{eq:ivp} in the $H^1_0$ topology. The initial
value problem can be solved uniquely in the $L^2$ topology as well. In fact, it is easily
verified that, because $D^2 p_h$ is bounded, then the first derivative of $p_h$, $D p_h :L^2 \to L^2$, is uniformly Lipschitz with respect to the
$L^2$ norm.
Thus, one only has to show that the $H^1_0$ flow $\pi_x$ of Proposition \ref{proposition:existenceuniqueness} solved in the $H^1_0$ topology corresponds to
the $L^2$ gradient flow associated to $p_h$.
To verify this, one needs to check that
 the $H^1_0$ solution also satisfies the initial value problem of equation
 \eqref{eq:ivp} under the $L^2$ norm. Specifically, consider the $H^1_0$
 solution $\pi_x$ of Proposition \ref{proposition:existenceuniqueness} with any $\pi_x(0)=x \in H^1_0$. The path $\pi_x$ is continuously
 differentiable as a map from 
 $\mathbb R_+$ to $H^1_0$. It suffices to check that $\pi_x(t)$ is continuously differentiable 
 as a map from $\mathbb R_+$ to $L^2$ as well. This is easily established using Poincar\'e inequality
 since
 \begin{equation}
   \label{eq:H1solutionIsL2solution}
   \begin{aligned}
     &\|\pi_x(t+\delta) -\pi_x(t) +D p_h(\pi_x(t)) \|_{L^2}\\
     & \le  C_p\|\pi_x(t+\delta) -\pi_x(t) +D p_h(\pi_x(t)) \|_{H^1_0} =o(\delta),
   \end{aligned}
 \end{equation}
where the Poincar\'e constant is $C_p= 1$ for the pair $(L^2, H^1_0)$.
It is clear from equation \eqref{eq:H1solutionIsL2solution} and the definition of Frech\'et derivative that the $H^1_0$ solution
$\pi_x$ also satisfies the initial value problem of equation \eqref{eq:ivp}
under the $L^2$ norm. Thus, $\pi_x$ is the unique $L^2$ solution of
the intial value problem of equation $\eqref{eq:ivp}$.
\end{remark}

The following Theorem, based
on Proposition \ref{proposition:existenceuniqueness}, guarantees
the convergence of $\pi_x$ to a critical point of $p_h$ as $t \to \infty$. The
statement about the convergence strongly relies on the compact embedding of $H^1_0$ in $L^2$, the
boundedness of the first two derivatives of $p_h$, and assumption (H4).
\begin{theorem}
\label{theorem:solutionStaysInClosedBall}
Assume  (H1), (H2), (H3), and (H4) hold. Let $\pi_x$ be the $H^1_0$  solution of the initial value problem of equation
\eqref{eq:ivp} with $x=\pi_x(0) \in H^1_0$. Let $C_1>0$ be such that 
$\| \pi_x(t) \|_{H^1_0} \leq C_1$ for all $t \geq 0$. Then there exists a unique
$\pi_x(\infty) \in L^2$ 
such that $\|\pi_x(\infty)\|_{H^1_0} \leq C_1$, $\lim_{t \to
  \infty}\|\pi_x(t) - \pi_x(\infty)\|_{L^2}=0$, and
$Dp_h(\pi_x(\infty))=0$.
\end{theorem}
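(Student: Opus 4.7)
The plan is to combine a classical Lyapunov argument based on the monotonicity of $p_h$ along $\pi_x$ with the compact embedding $H^1_0 \hookrightarrow L^2$, and then to upgrade from subsequential convergence to convergence of the full trajectory using the isolation condition (H4) and connectedness of the $\omega$-limit set. The delicate step is showing that every point of the $\omega$-limit set is a critical point; this is where the $L^2$-Lipschitz character of $Dp_h$ and the induced continuous dependence of the flow on its initial datum (as discussed in the remark following Proposition \ref{proposition:existenceuniqueness}) come into play.

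First I show that $V(t) := p_h(\pi_x(t))$ is non-decreasing and bounded above. By the chain rule for Fr\'echet derivatives applied to $p_h \circ \pi_x$ and the gradient flow equation,
\[
V'(t) = \langle Dp_h(\pi_x(t)), \pi_x'(t) \rangle_{L^2} = \|Dp_h(\pi_x(t))\|_{L^2}^2 \geq 0,
\]
while $V(t) \leq K_0$ by (H1). Hence $V(t) \nearrow p^*$ for some $p^* \leq K_0$ and
\[
\int_0^\infty \|Dp_h(\pi_x(t))\|_{L^2}^2 \, dt = p^* - V(0) < \infty,
\]
yielding a sequence $t_n \to \infty$ with $Dp_h(\pi_x(t_n)) \to 0$ in $L^2$.

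Since $\|\pi_x(t)\|_{H^1_0} \leq C_1$ for all $t \geq 0$ and $H^1_0$ embeds compactly in $L^2$, the orbit is pre-compact in $L^2$, and the $L^2$-$\omega$-limit set $\omega := \bigcap_{T \geq 0} \overline{\{\pi_x(t) : t \geq T\}}^{\,L^2}$ is non-empty, compact, and connected (as the intersection of a nested family of compact connected sets). Weak lower semicontinuity of $\|\cdot\|_{H^1_0}$ under $L^2$-convergence forces $y \in H^1_0$ with $\|y\|_{H^1_0} \leq C_1$ for every $y \in \omega$. I claim each such $y$ is a critical point: given $s_m \to \infty$ with $\pi_x(s_m) \to y$ in $L^2$, the $L^2$-Lipschitz continuity of $Dp_h$ (from Lemma \ref{lemma:derivativesOfp}) yields continuous dependence of the flow on its initial value in $L^2$, so $\pi_x(s_m + t) \to \sigma_y(t)$ in $L^2$ for each $t \geq 0$, where $\sigma_y$ is the $H^1_0$-solution of the IVP starting at $y$ (which exists by Proposition \ref{proposition:existenceuniqueness}). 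Using $L^2$-continuity of $p_h$, I obtain $p_h(\sigma_y(t)) = \lim_m p_h(\pi_x(s_m + t)) = p^*$ for every $t \geq 0$, and the energy identity above then forces $Dp_h(y) = Dp_h(\sigma_y(0)) = 0$.

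If $p^* = 0$, monotonicity gives $V \equiv 0$, hence $\|Dp_h(\pi_x(t))\|_{L^2} \equiv 0$ and $\pi_x$ is stationary, so the conclusion is immediate. Otherwise $p^* > 0$ and every $y \in \omega$ is a critical point with $p_h(y) = p^* > 0$; by (H4), such critical points are isolated in the $L^2$ topology. A connected set of isolated points in a metric space is a singleton, so $\omega = \{\pi_x(\infty)\}$ with $\pi_x(\infty) \in H^1_0$, $\|\pi_x(\infty)\|_{H^1_0} \leq C_1$, and $Dp_h(\pi_x(\infty)) = 0$. By pre-compactness of the orbit and the definition of $\omega$, $\pi_x(t) \to \pi_x(\infty)$ in $L^2$, and uniqueness of this limit is immediate from the convergence.
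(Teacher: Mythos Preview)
Your proof is correct and takes a genuinely different route from the paper's argument, though both rest on the same two pillars: the compact embedding $H^1_0 \hookrightarrow L^2$ and the isolation hypothesis (H4).

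The paper first invokes an external result (Lemma 7.4.4 in Jost) to obtain $\|Dp_h(\pi_x(t))\|_{L^2} \to 0$ along the \emph{entire} trajectory; this immediately makes every $L^2$ subsequential limit a critical point by continuity of $Dp_h$. It then rules out a second accumulation point by a direct contradiction: if some subsequence stays at distance at least $\delta_2$ from $\pi_x(\infty)$, continuity of the path produces intermediate times $r_k$ with $\|\pi_x(r_k)-\pi_x(\infty)\|_{L^2}$ fixed at $\min\{\delta_1,\delta_2\}/2$, and compactness gives a second critical point strictly inside the isolation radius $\delta_1$, contradicting (H4).

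You instead derive the energy identity yourself, work with the full $\omega$-limit set, and use the LaSalle-type mechanism (flow invariance of $\omega$ via $L^2$ continuous dependence, plus constancy of $p_h$ on $\omega$) to show every $\omega$-limit point is critical. Uniqueness then follows from the clean topological fact that a connected set whose points are all isolated must be a singleton. Your approach is more self-contained (no citation to Jost) and more in the spirit of classical dynamical systems; the paper's approach is shorter once the Jost lemma is granted and avoids having to verify flow invariance and connectedness of $\omega$. You also handle the trivial case $p^*=0$ explicitly, which the paper leaves implicit (it is covered by the stationary case $p_h(x)=0$ already treated in the proof of Proposition~\ref{proposition:existenceuniqueness}).
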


The results above show that the $L^2$ gradient flow on $p_h$ is well-defined and its trajectories converge to 
critical points of $p_h$ that are in $H^1_0$ whenever the starting point
$x=\pi_x(0)$ is an element of $H^1_0$. We conclude this section with the following Lemma
which states that all the non-trivial critical points of $p_h$ belong to
$H^1_0$: thus, even though the functional $p_h$ ``spreads'' the probability law
$P$ of the random curves outside of its support $H^1_0$ (in fact, it is easily seen that there exists points $x \in L^2$
that are not in $H^1_0$ with $p_h(x)>0$), yet all of its non-trivial critical
points still lie in the support of $P$.
\begin{lemma}
\label{lemma:criticalPointsInH1}
Assume (H1), (H2), and (H3) hold. Let $x \in L^2$ be a critical point of $p_h$ such that
$p_h(x) >0$ (i.e. $x$ is a nontrivial critical point of $p_h$). Then $x\in
H^1_0$. Furthermore, if $\|X\|_{H^1_0} \leq M$ $P$-almost surely, then all the
nontrivial critical points of $p_h$ are contained in $B_{H^1_0}(0,M)$.
\end{lemma}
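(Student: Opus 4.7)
The plan is to use Lemma \ref{lemma:derivativesOfp} to rewrite the critical-point equation $Dp_h(x)=0$ as an identity expressing $x$ as a normalized Bochner integral of $X$, and then to read off that this integral must lie in $H^1_0$ (and in $B_{H^1_0}(0,M)$ under the extra boundedness hypothesis). From Lemma \ref{lemma:derivativesOfp},
\[
0 \;=\; Dp_h(x) \;=\; 2\,E_P\bigl[K_h'(\|X-x\|^2_{L^2})(x-X)\bigr],
\]
and since $x$ is deterministic this rearranges to
\[
a(x)\,x \;=\; E_P\bigl[K_h'(\|X-x\|^2_{L^2})\,X\bigr],\qquad a(x):=E_P\bigl[K_h'(\|X-x\|^2_{L^2})\bigr].
\]

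The first step is to show $a(x)\neq 0$. For every $t\ge 0$, (H2) gives $K'(t^2)\le -K(t^2)\le 0$, with strict inequality whenever $K(t^2)>0$. Since $K_h'(s)=K'(s/h)/h$, setting $t^2=s/h$ propagates this to $K_h'(s)\le 0$ everywhere on $\mathbb R_+$, with strict inequality whenever $K_h(s)>0$. Hence $K_h'(\|X-x\|^2_{L^2})\le 0$ $P$-a.s.\ and is strictly negative on the event $\{K_h(\|X-x\|^2_{L^2})>0\}$. Because $p_h(x)>0$ by hypothesis, this event has positive $P$-mass, so $a(x)<0$. We may therefore divide to write
\[
x \;=\; \frac{1}{a(x)}\,E_P\bigl[K_h'(\|X-x\|^2_{L^2})\,X\bigr].
\]

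The second step is to interpret the right-hand side as a Bochner integral valued in $H^1_0$. By (H3), $X$ is $P$-a.s.\ absolutely continuous and $E_P\|X\|_{H^1_0}=E_P\|X'\|_{L^2}\le N_1$, so $X\in H^1_0$ $P$-almost surely. The function $s\mapsto K_h'(s)$ is continuous on $\mathbb R_+$ by (H1), and the bound $|K_h'(t^2)t|\le K_1$ yields $|K_h'(s)|\le K_1/\sqrt{s}$ for $s>0$; combined with continuity at $0$ this gives a uniform bound $|K_h'|\le B(h)$ on all of $\mathbb R_+$. Therefore
\[
E_P\bigl[\,|K_h'(\|X-x\|^2_{L^2})|\,\|X\|_{H^1_0}\,\bigr] \;\le\; B(h)\,N_1 \;<\;\infty,
\]
so $E_P[K_h'(\|X-x\|^2_{L^2})X]$ exists as a Bochner integral in $H^1_0$, and $x$, being a scalar multiple of it, lies in $H^1_0$. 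For the last clause of the lemma, if $\|X\|_{H^1_0}\le M$ $P$-almost surely, the triangle inequality for Bochner integrals gives
\[
\|x\|_{H^1_0} \;\le\; \frac{E_P[\,|K_h'(\|X-x\|^2_{L^2})|\,\|X\|_{H^1_0}\,]}{|a(x)|} \;\le\; M\cdot\frac{E_P[\,|K_h'(\|X-x\|^2_{L^2})|\,]}{|a(x)|} \;=\; M,
\]
so $x\in B_{H^1_0}(0,M)$.

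The only mildly subtle point is establishing $a(x)\neq 0$, which crucially combines the sign condition (H2) with the nontriviality assumption $p_h(x)>0$; once that is in hand, the remainder is routine Bochner-integral manipulation, relying on (H1) for boundedness of $K_h'$ and (H3) for integrability of $\|X\|_{H^1_0}$.
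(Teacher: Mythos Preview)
Your proof is correct and follows the same overall strategy as the paper: rewrite the critical-point equation as $x = a(x)^{-1}E_P[K_h'(\|X-x\|^2_{L^2})X]$, show $a(x)<0$ using (H2) together with $p_h(x)>0$, and then argue that the right-hand side lies in $H^1_0$ (and in $B_{H^1_0}(0,M)$ under the extra hypothesis).

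The one noteworthy difference is in how you establish $H^1_0$ membership of the numerator. The paper stays in $L^2$: it tests $\langle E_P[K_h'(\|X-x\|^2)X],v'\rangle_{L^2}$ against $v\in C_c^\infty$, integrates by parts using $X\in H^1_0$ a.s., bounds the resulting linear functional by $K_2N_1\|v\|_{L^2}$, and invokes Lemma~\ref{lemma:auxiliaryToL2GradientInH1}. You instead argue directly that the Bochner integral exists in the smaller space $H^1_0$ because $|K_h'|$ is uniformly bounded and $E_P\|X\|_{H^1_0}\le N_1$. Both are valid; the paper's route avoids any discussion of $H^1_0$-valued measurability and reuses the weak-derivative machinery already set up, while yours is conceptually more direct. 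Incidentally, you can shorten your boundedness argument for $K_h'$: (H1) with $\ell=2$ gives $|K_h'(t^2)|\le K_2$ outright, so no appeal to continuity at $0$ is needed.
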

Note that the stronger assumption that $P(\|X\|_{H^1_0} \leq M)=1$ is a
functional analogue of the boundedness assumption which is frequently made
with finite-dimensional data. 

\section{Finite-dimensional adaptivity}
\label{section:weakAdaptivity}
%
In this section, we assume that the distribution of the random function $X$ is supported on some compact subset $S_c$ of a finite dimensional vector space. In other words, 
\begin{equation}
P(X\in S_c)=1,
\end{equation}
where $S_c$ is a compact subset of a finite-dimensional subspace $S \subset L^2$.
Two insightful outcomes are discussed in detail.
\begin{itemize}
\item[1.] Under some mild extra assumptions on the finite dimensional distribution of $X$, it is shown in Lemma \ref{lemma:nonDegeneracyOfCriticalPoints} that $p_h$, as a functional from $L^2$ to $\mathbb{R}_{+}$, is a Morse functional. This provides an important sufficient condition under which (H4) holds.
\item[2.] If the functional random variable $X$ admits a finite-dimensional distribution on $S_c$,  
it is natural to
ask whether the $L^2$ gradient flow on $p_h$ corresponds to the
finite-dimensional gradient flow associated to the expectation of a kernel density estimator of the density of $X$ on $S$. This
section provides a positive answers to this question. Furthermore, we show that such finite-dimensional gradient flow is entirely contained in $S$.
\end{itemize}
Suppose that the probability law $P$ of the functional random variable $X$ is
supported on a compact subset $S_c$ of a finite-dimensional space $S \subset L^2$.
If this is the case, there exists $\delta >0$ such that 
if $0< h \leq \delta$, then $p_h$ is a Morse function on the interior of $S_c$ (see Remark
\ref{remark:commentsOnMorseCondition} and Proposition
\ref{proposition:phIsMorse}). Moreover, as implied by
Lemma \ref{lemma:Sexample} and Lemma \ref{lemma:nonDegeneracyOfCriticalPoints} of this section, the
trajectories of the $L^2$ gradient flow associated to $p_h$ are all contained
in $S$ and they end at critical points of $p_h$ that belong to $S_c$. It is natural to
ask whether the $L^2$ gradient flow on $p_h$ corresponds to the
finite-dimensional gradient flow associated to some pseudo-density on $S$. This
section answers this question and shows that, if $X$ admits a density function $p$
(when $X$ is viewed as a finite-dimensional random vector in $S_c$), then the $L^2$ gradient flow associated
to $p_h$ corresponds to the gradient flow associated to the expectation of a kernel
density estimator of $p$ with bandwidth $h$.

Let $S = \text{span} \{ f_1\ldots f_d\}$ be a linear subspace of $L^2$. Without
loss of generality, assume that the $f_i$'s form an orthonormal 
basis of $S$ equipped with the $L^2$ norm and that $X\in S_c$
almost surely. Then, $X$ admits the decomposition $X=a_1 f_1 +\ldots +a_df_d$
for some random coefficients $\{a_i\}_{i=1}^d$. Let $\tilde X = [a_1,\dots,
a_d]^T$ and suppose that the distribution of $\tilde X$ has density $p : \mathbb
R^d \to \mathbb R_+$ with respect
to the Lebesgue measure. We have
%
\begin{lemma}
\label{lemma:Sexample}
Assume (H1) and (H2) hold and $P(X\in S_c) =1$. If $x=\pi_x(0) \in S$, then $\pi_x(t) \in S$ for any $t
\geq 0$. Furthermore, all the non-trivial critical points of $p_h$ belong to $S$.
\end{lemma}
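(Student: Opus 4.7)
The proof rests on the observation that the gradient $Dp_h(x)$ inherits the orthogonality structure of its integrand: whenever $X \in S$ almost surely and $x \in S$, every contribution into $S^\perp$ vanishes. The plan is thus to first show that $Dp_h$ maps $S$ into itself, then invoke uniqueness of solutions to conclude that $\pi_x$ stays in $S$, and finally combine $Dp_h(x^*)=0$ with assumption (H2) and the positivity $p_h(x^*)>0$ to force every non-trivial critical point into $S$.

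For the invariance of the vector field, fix $x \in S$ and $y \in S^\perp$. By Lemma \ref{lemma:derivativesOfp}, $Dp_h(x) = 2\,E_P K_h'(\|X-x\|_{L^2}^2)(x - X)$. Since $X\in S$ almost surely we have $x-X \in S$ almost surely, so $\langle x-X, y\rangle_{L^2} = 0$ almost surely, and hence $\langle Dp_h(x), y\rangle_{L^2} = 0$. Because $S$ is closed (finite-dimensional), this yields $Dp_h(x) \in S$ for every $x \in S$.

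For the invariance of the flow, consider the restricted problem $\sigma'(t) = Dp_h(\sigma(t))$, $\sigma(0)=x$, posed within the finite-dimensional space $S$. The restriction of $Dp_h$ to $S$ is globally Lipschitz under $\|\cdot\|_{L^2}$ (the Hessian $D^2p_h$ is a bounded operator by Lemma \ref{lemma:derivativesOfp}), so this ODE admits a unique maximal solution $\sigma : \mathbb{R}_+ \to S$. But $\sigma$ is then also a valid $L^2$ solution of the original IVP \eqref{eq:ivp} with the same initial value, and the remark following Proposition \ref{proposition:existenceuniqueness} guarantees uniqueness of $L^2$ solutions. Hence $\sigma = \pi_x$, and $\pi_x(t) \in S$ for every $t \geq 0$. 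For the critical points, let $x^*$ satisfy $Dp_h(x^*)=0$ and $p_h(x^*) > 0$. For any $y \in S^\perp$, $\langle X, y\rangle_{L^2} = 0$ almost surely, so pairing with $y$ gives
\begin{equation*}
0 = \langle Dp_h(x^*), y\rangle_{L^2} = 2\,\langle x^*, y\rangle_{L^2}\; E_P K_h'(\|X-x^*\|_{L^2}^2).
\end{equation*}
Assumption (H2), $K'(u) + K(u) \le 0$, yields $K_h'(s) = K'(s/h)/h \le -K_h(s)/h$, whence $E_P K_h'(\|X-x^*\|_{L^2}^2) \le -p_h(x^*)/h < 0$. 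Therefore $\langle x^*, y\rangle_{L^2} = 0$ for every $y \in S^\perp$, and $x^* \in S$.

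The only mildly delicate step is the strict negativity of $E_P K_h'(\|X-x^*\|_{L^2}^2)$ in the last paragraph; this is precisely what (H2) is designed to deliver, and it is the place where the non-triviality assumption $p_h(x^*)>0$ is essential. All other steps are formal consequences of the orthogonality of $S$ and $S^\perp$ together with uniqueness of the $L^2$ flow.
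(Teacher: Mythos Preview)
Your proof is correct and follows essentially the same approach as the paper: show $Dp_h(S)\subset S$ via orthogonality to $S^\perp$, deduce flow invariance from uniqueness, and handle critical points by exploiting the strict negativity $E_P K_h'(\|X-x^*\|_{L^2}^2)<0$ furnished by (H2) together with $p_h(x^*)>0$. The paper's proof is terser---it simply records the orthogonality computation and defers the critical-point claim to the argument of Lemma~\ref{lemma:criticalPointsInH1} (which uses the mean-shift representation $x^*=E_P K_h' X / E_P K_h'$ rather than your direct pairing with $y\in S^\perp$)---but the content is the same.
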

%
For the rest of this section, let us replace assumption (H4) with
\begin{itemize}
\item[(H4')]
$X$ is an element of $S_c$ with probability 1, $X \sim P$ admits
density $p$ on $S_c$, and $p$ satisfies the assumptions of Proposition \ref{proposition:phIsMorse}.
\end{itemize}
%
Consider $x = x_1f_1+\dots +x_df_d \in S$. Let $\tilde x =[x_1,\dots, x_d]^{T}
\in \mathbb{R}^d$. Define $\tilde p_h(\tilde x): \mathbb R^d \to \mathbb R_+$ 
to be $\tilde p_h (\tilde x) = E_P \, K_h(\|\tilde X- \tilde x\|_2^2)$, where
$\|\cdot\|_2$ denotes the standard Euclidean norm. Note that
$\frac{1}{h^d}\tilde p_h(\tilde x)$ is the expectation of a standard finite
dimensional kernel density estimator at $\tilde x$. Since
$\|X-x\|^2_{L^2}=\|\tilde X -\tilde x\|^2_2$, it is clear that $p_h(x) =\tilde p_h(\tilde x)$.
To see the connection between the functional gradient $Dp_h(x)$ and $\nabla
\tilde p_h(\tilde x)$, the gradient of $\tilde p_h$ at $\tilde x$, note that the random variable
\begin{equation}
\begin{aligned}
\label{derivativeequivalent}
\langle Dp_h(x), f_i\rangle_{L^2} &=\langle  E_P \, 2K_h'( \|X-x\|^2_{L^2}) (x-X), f_i\rangle_{L^2} \\
&=  E_P \, 2K_h'( \|X-x\|^2_{L^2}) \langle x-X , f_i \rangle_{L^2} \\
&= 2E_P K' (\|\tilde X-\tilde x\|_2^2) (x_i-a_i)
 \end{aligned}
 \end{equation}
agrees with the $i$-th component of the gradient of $\tilde p_h$ at $\tilde
x$. This equivalence implies that the gradient flow (with starting points in the subspace $S$) on
$p_h$ and $\tilde p_h$ coincide (note that scaling the $\tilde p_h$ by $h^{-d}$
does affect the associated gradient flow). Furthermore, there exists a $\delta >0$
depending on $p$ such that $\tilde p_h(\tilde x)$ is a Morse
function for $0< h \leq \delta$ (see Remark \ref{remark:commentsOnMorseCondition}). Therefore, all the  non-trivial critical points of
$\tilde p_h$ are separated in $\mathbb R^d$. In light of Lemma
\ref{lemma:Sexample}, all the non-trivial critical points of $p_h$ are thus separated
in $S$ (and in $L^2$). 

Next, we have the following Lemma which
guarantees that if $p$ is a Morse density on $S_c$, then the non-trivial critical points of $p_h$ are
non-degenerate for $h$ sufficiently small and they all belong to $S_c$ (a critical point $x^*$ of $p_h$ is non-degenerate if $D^2p_h(x^*)$ is an isomorphism from $L^2$ to $L^2$).
\begin{lemma}
\label{lemma:nonDegeneracyOfCriticalPoints}
Under assumption (H1) (H2) and  (H4'), all the non-trivial critical points of $p_h$ lie in
$S_c$ and are non-degenerate for $h$ sufficiently small. Thus, for sufficiently small $h$, (H4) holds.
\end{lemma}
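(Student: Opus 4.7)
The plan is to leverage the finite-dimensional reduction set up in the discussion preceding the lemma, together with Proposition \ref{proposition:phIsMorse} applied to the auxiliary function $\tilde p_h$. The argument splits into two parts: first, locating every non-trivial critical point inside $S_c$; second, showing that at each such point the bilinear form $D^2 p_h(x^*)$ corresponds to a boundedly invertible operator on $L^2$.

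For the location, Lemma \ref{lemma:Sexample} already tells us that any non-trivial critical point $x^*$ of $p_h$ sits in $S$. Writing $x^* = \tilde x_1^* f_1 + \cdots + \tilde x_d^* f_d$, the identity \eqref{derivativeequivalent} shows that vanishing of $Dp_h(x^*)$ in $L^2$ forces $\partial_i \tilde p_h(\tilde x^*) = \langle Dp_h(x^*), f_i\rangle_{L^2} = 0$ for every $i$, so $\tilde x^*$ is a critical point of $\tilde p_h$ on $\mathbb{R}^d$. Under (H4'), the density $p$ of $\tilde X$ satisfies the hypotheses of Proposition \ref{proposition:phIsMorse}, so there exists $\delta>0$ such that for $0<h\leq \delta$ every non-trivial critical point of $\tilde p_h$ lies in $\operatorname{int}(S_c)$ and is non-degenerate as a critical point of the finite-dimensional Morse function $\tilde p_h$; in particular $x^* \in S_c$.

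For non-degeneracy of $D^2 p_h(x^*)$, I would exploit the orthogonal splitting $L^2 = S \oplus S^\perp$. Since $x^* \in S$ and $X \in S_c$ $P$-almost surely, $x^* - X \in S$ a.s., so the inner product $\langle x^* - X, z\rangle_{L^2}$ appearing in \eqref{eq:secondDerivativeOfp} vanishes whenever $z \in S^\perp$. Inspection of \eqref{eq:secondDerivativeOfp} then shows that the operator associated (via Riesz) to $D^2 p_h(x^*)$ is block-diagonal with respect to $S \oplus S^\perp$: on $S^\perp$ it acts as the scalar $\alpha := 2E_P K'_h(\|X-x^*\|^2_{L^2})$, while its restriction to $S$, expressed in the basis $\{f_1,\dots,f_d\}$, equals the Hessian matrix $\nabla^2 \tilde p_h(\tilde x^*)$ (this identification is obtained by differentiating \eqref{derivativeequivalent} once more and matching the integrands against \eqref{eq:secondDerivativeOfp}). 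The $S$-block is invertible because $\tilde x^*$ is a non-degenerate critical point of the Morse function $\tilde p_h$. For the $S^\perp$-block, (H2) together with $p_h(x^*)>0$ gives $\alpha<0$ (indeed $K_h' \le -K_h/h$ pointwise, so $\alpha \le -2 p_h(x^*)/h$), hence the block is a nonzero multiple of the identity. Invertibility of both blocks yields a bounded inverse of $D^2 p_h(x^*)$ on all of $L^2$. Isolation of the non-trivial critical points of $p_h$ in the $L^2$ topology, i.e.\ (H4), then follows by combining the isolation of $\tilde x^*$ in $\mathbb{R}^d$ (a standard consequence of the Morse property of $\tilde p_h$) with Lemma \ref{lemma:Sexample}, which confines every other non-trivial critical point of $p_h$ to $S$.

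The main obstacle I anticipate is the bookkeeping that produces the block-diagonal structure and the clean identification of the $S$-block with $\nabla^2 \tilde p_h(\tilde x^*)$; once that is in place, the infinite-dimensional portion of the invertibility question collapses to the sign of the scalar $\alpha$, which (H2) controls immediately, and the finite-dimensional portion is supplied verbatim by Proposition \ref{proposition:phIsMorse}.
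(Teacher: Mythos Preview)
Your proposal is correct and follows essentially the same route as the paper's proof: locate the critical point in $S$ via Lemma \ref{lemma:Sexample}, pass to the finite-dimensional function $\tilde p_h$ and invoke Proposition \ref{proposition:phIsMorse} to land in $S_c$ with a non-degenerate Hessian there, then exploit the block-diagonal decomposition of $D^2 p_h(x^*)$ with respect to $S\oplus S^\perp$, identifying the $S$-block with $\nabla^2\tilde p_h(\tilde x^*)$ and the $S^\perp$-block with the scalar $2E_P K'_h(\|X-x^*\|^2_{L^2})<0$. Your write-up is in fact slightly more explicit than the paper's in spelling out why the block structure holds and in deriving (H4) from the finite-dimensional isolation, but the argument is the same.
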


In the finite-dimensional case considered in this section, we can say more about the behavior of the $L^2$
gradient flow on $p_h$. In particular, we can characterize the solutions to the
initial value problem of equation \eqref{eq:ivp} also for the case in which the
starting point $x=\pi_x(0)$ does not belong to the support of $P$ (which is, in this case,
$S_c \subset L^2$). In fact, let $x$ be an element of $L^2$ which does not belong
to $S$.
The Gram-Schmidt orthogonalization process
guarantees that there exists $S' \supset S$ such that
$x=\pi_{x}(0) \in S'$ and $S'= \text{span} \{f_1,\dots f_d, f_{d+1}\}$, where $f_{d+1}$
is orthogonal to $\{f_i\}_{i=1}^d$ and $\|f_{d+1}\|_{L^2}=1$. The following Lemma guarantees
that the gradient ascent path originating from $x$ is entirely contained in
$S'$. Its proof is identical to that of Lemma \ref{lemma:Sexample}.
\begin{lemma}
Assume (H1) and (H2) hold and that $P(X\in S_c) =1$. Suppose $x=\pi_{x}(0)\in S'$. Then $\pi_{x}(t)\in S'$ for all $t
\geq 0$.
\end{lemma}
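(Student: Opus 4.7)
The plan is to adapt the argument for Lemma \ref{lemma:Sexample} essentially verbatim: show that the vector field $Dp_h$ carries the finite-dimensional subspace $S'$ into itself, and then invoke uniqueness of the gradient flow to conclude that a trajectory starting in $S'$ cannot leave $S'$.

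First I would observe that $S'=\mathrm{span}\{f_1,\ldots,f_d,f_{d+1}\}$ is finite-dimensional and therefore a closed subspace of $L^2$. Since $X \in S_c \subset S \subset S'$ holds $P$-almost surely, for any $y \in S'$ the random element $y - X$ lies in $S'$ $P$-a.s. Hence the integrand $K_h'(\|X-y\|^2_{L^2})(y-X)$ appearing in
\[
Dp_h(y)=2\,E_P\bigl[K_h'(\|X-y\|^2_{L^2})(y-X)\bigr]
\]
is an $S'$-valued random element, and closedness of $S'$ together with the fact that the Bochner expectation commutes with bounded linear projections shows that $Dp_h(y)\in S'$. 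Thus $Dp_h$ restricts to a map $S'\to S'$.

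Next I would use uniqueness of the gradient flow to conclude. By the remark following Proposition \ref{proposition:existenceuniqueness}, boundedness of $D^2 p_h$ implies that $Dp_h: L^2 \to L^2$ is globally Lipschitz, so the initial value problem \eqref{eq:ivp} has a unique $L^2$ solution $\pi_x$ starting at $x \in S'$. The restriction $Dp_h|_{S'}: S' \to S'$ inherits this Lipschitz property, and since $S'\cong \mathbb{R}^{d+1}$ is finite-dimensional, the Picard–Lindelöf theorem produces a unique solution $\tilde\pi_x \subset S'$ to the same initial value problem, defined on a maximal interval. This solution cannot explode in finite time: Lemma \ref{lemma:flowPointsTowardsBall} (whose hypotheses only require (H1)–(H2) and the almost sure bound on $X$) confines it to a bounded set, so $\tilde\pi_x$ extends to all $t\geq 0$. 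But $\tilde\pi_x$, viewed as a map into $L^2$, also satisfies the $L^2$ initial value problem, so by uniqueness of the $L^2$ solution, $\pi_x=\tilde\pi_x$. In particular, $\pi_x(t)\in S'$ for every $t\geq 0$.

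There is essentially no obstacle here: the only thing to be careful about is that the argument does not secretly use (H4), which may fail on $S'\setminus S$ since $P$ is not supported there, and that $S'$ being closed is exactly what lets the expectation stay in $S'$. Both points are handled above. An equivalent and even more direct route is to set $v(t)=(I-P_{S'})\pi_x(t)\in (S')^\perp$, note that $v'(t)=(I-P_{S'})Dp_h(\pi_x(t))$, and use a continuity argument on $T=\sup\{t\geq 0:\pi_x(s)\in S'\ \forall s\leq t\}$ to show $v\equiv 0$; but the uniqueness-based formulation above is cleaner.
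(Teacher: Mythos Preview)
Your argument is correct and follows the same line as the paper, which simply states that the proof is identical to that of Lemma~\ref{lemma:Sexample}: show $Dp_h(S')\subset S'$ using closedness of the finite-dimensional subspace $S'$ and the fact that $X\in S\subset S'$ almost surely, and then conclude that the flow stays in $S'$. One small remark: your detour through Lemma~\ref{lemma:flowPointsTowardsBall} for non-explosion of $\tilde\pi_x$ is unnecessary and not quite applicable here, since in this paragraph the starting point $x$ is only assumed to lie in $L^2$ (not $H^1_0$) while that lemma works with the $H^1_0$ inner product; the global $L^2$-Lipschitz property of $Dp_h$ that you already invoked from the remark after Proposition~\ref{proposition:existenceuniqueness} yields global existence of $\tilde\pi_x$ on $S'$ directly.
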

\begin{remark}
In the finite-dimensional setting of this section (in particular under
assumption (H4')), and for $h$ sufficiently small,
the basin of attraction of a saddle point of $p_h$ is negligible: in fact, from
the above disussion, it is clear that if the random function $X \sim P$ is
valued in a compact subset $S_c$ of a finite-dimensional
linear subspace $S$ of $L^2$ and $P$ has a proper Morse density $p$ on
$S_c$, then the basin of attraction of any saddle point of $p_h$ is neglibible for $h$
sufficiently small (since $p_h$ is Morse on $\operatorname{int}(S_c)$ for $h$
small enough). Stated more precisely, for $h$ sufficiently small, if $x_0^* \in \operatorname{int}(S_c)$
is a saddle point of $p_h$ then
$P(\{x \in S: \lim_{t \to \infty} \|\pi_x(t) - x_0^*\|_{L^2}=0\})=0$.
\end{remark}

\section{Statistical relevance of the estimated local modes}
\label{section:confidenceMarking}
%
The empirical counterpart of $p_h(x)= E_P \, K_h(\|X-x\|^2_{L^2})$ is the functional $\hat p_h(x) = \frac{1}{n} \sum_{i=1}^n K_h(\|X_i-x\|^2_{L^2})$, where $\{X_i\}_{i=1}^n$ are i.i.d sampling from the probability law $P$.
The critical points of $\hat p_h$ can be found, for example, by using a functional version of the mean-shift algorithm (see \citealp{ciollaromeanshift}).
In this section, we provide a statistical algorithm to detect whether a critical point of $\hat p_h$ 
corresponds to a local maximum of $p_h$. This algorithm provides two insights for functional mode clustering.
\begin{itemize}
\item [1.] For finite-dimensional clustering problems, if the underlying density $p$ is a Morse function,
 then the basin of attraction of a saddle point of $p$ has null probability content as
 it corresponds to a manifold of lower dimension. 
 In functional data clustering, however, the structure of the functional space is more complicated in the sense that there is no guarantee that the
 probability content of the basin of attraction of a saddle point of $p_h$ is
 negligible, even if $p_h$ is a Morse function. However, in analogy with the finite-dimensional case, clusters associated to
  non-degenerate local modes should generally be considered more informative as opposed to clusters
  associated with saddle points.
  \item[2.] Several results in the previous section are derived under assumption (H4), which essentially states that the local critical points of $p_h$ are well-behaved. Without assuming (H4), the algorithm provides a simple way to classify well-behaved local critical modes of $p_h$ by analyzing $\hat p_h$. In this way, informative clusters can still be revealed in a less restrictive setting.
\end{itemize}
%
Since the local modes of $\hat p_h$ that correspond to
non-degenerate local modes of $p_h$ provide the greatest insight about the population
clustering, we refer to these local modes as ``significant'' local modes.
In the following, we derive a procedure that allows us to 
discriminate the significant local modes from the non-significant ones.

Before giving the definition non-degeneracy for a critical point of a functional
defined on an Hilbert space ($L^2$ in our case), it is convenient to adopt the convention that 
a linear operator from an Hilbert space to itself can be associated to a bilinear form on the Hilbert space and vice versa.
For example if $T : L^2 \to L^2$ is a linear operator, then it can be associated
to a bilinear form by letting $T(v, w) = \langle Tv ,w\rangle_{L^2}$.
\begin{definition} 
Let $T : L^2 \to L^2$ be a bounded linear operator. $T$ is said to be self-adjoint if $\langle T v, w\rangle  = \langle v, Tw\rangle$. 
$T$ is said to be positive (respecively negative) definite if $\langle Tv,v\rangle > 0$ (respectively $<0$) for all $v\neq 0$. Furthermore, $T$ is said to be an isomorphism if both $T$ and $T^{-1}$ are bounded.
\end{definition}
\begin{definition}
\label{def:nondegeneracy}
 Let $f: L^2 \to \mathbb R$ be twice continuously
  differentiable with bounded third derivative. Suppose $x^*$ is a critical point of $f$, i.e. $Df(x^*)
  =0$. Then, $x^*$ is said to be a non-degenerate local maximum (respectively
  minimum) if $D^2f(x^*)$ is a negative (respectively positive) definite isomorphism on $L^2$.
\end{definition}
It is a known fact that for any $x$, the second derivative of $f$, $D^2f (x)$,
is a self-adjoint linear operator. Furthermore, the following 
Lemma follows as a simple consequence of the fact that the second
derivative of $f$ at a non-degenerate local maximum is a self-adjoint negative-definite isomorphism.
\begin{lemma}
\label{lemma:equivalentnorms}
Suppose that $x^*$ is a non-degenerate local maximum of $f$. Then there exist $\delta >0$ such that 
\begin{equation}
\sup_{\|v\|_{L^2} =1}D^2f(x^*) (v,v) \leq -\delta.
\end{equation}
\end{lemma}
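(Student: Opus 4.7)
The plan is to reduce the claim to a coercivity bound on the negative of the Hessian. Write $T = D^2 f(x^*)$. By the self-adjointness of second Fréchet derivatives (noted immediately before the lemma) and by Definition~\ref{def:nondegeneracy}, $T$ is a bounded self-adjoint operator that is negative-definite and an isomorphism; equivalently, $S := -T$ is a bounded, self-adjoint, positive-definite operator whose inverse $S^{-1} = -T^{-1}$ is bounded, with $\|S^{-1}\| = \|T^{-1}\|$. Proving the lemma amounts to finding $\delta>0$ such that $\langle Sv,v\rangle_{L^2} \ge \delta$ whenever $\|v\|_{L^2}=1$.

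I would obtain this via the square-root trick. Because $S$ is a bounded, positive, self-adjoint operator on the Hilbert space $L^2$, the continuous functional calculus produces a unique bounded self-adjoint $R \ge 0$ with $R^2 = S$. Since $S$ is invertible with bounded inverse, so is $R$: indeed $R^{-1} = (S^{-1})^{1/2}$ is also produced by the functional calculus and satisfies $\|R^{-1}\|^2 = \|S^{-1}\| = \|T^{-1}\|$. Then, using self-adjointness of $R$,
\begin{equation*}
\langle Sv, v\rangle_{L^2} = \langle R^2 v, v\rangle_{L^2} = \langle Rv, Rv\rangle_{L^2} = \|Rv\|_{L^2}^2,
\end{equation*}
while $\|v\|_{L^2} = \|R^{-1}R v\|_{L^2} \le \|R^{-1}\| \, \|Rv\|_{L^2}$ gives $\|Rv\|_{L^2}^2 \ge \|v\|_{L^2}^2 / \|R^{-1}\|^2$. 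Setting $\delta := 1/\|T^{-1}\|$ then yields $\langle -Tv,v\rangle_{L^2} \ge \delta \|v\|_{L^2}^2$, i.e.\ $\sup_{\|v\|_{L^2}=1} D^2 f(x^*)(v,v) \le -\delta$, as required.

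The only nontrivial ingredient is the existence of the square root $R$ (together with boundedness of $R^{-1}$), which is standard functional calculus but is the one step that does not reduce to routine Hilbert-space manipulation. If one prefers to avoid the functional calculus altogether, an equivalent route is to invoke the spectral theorem for bounded self-adjoint operators directly: the numerical range $W(T) = \{\langle Tv,v\rangle_{L^2} : \|v\|_{L^2}=1\}$ coincides with the convex hull of the real, compact spectrum $\sigma(T)$, and the assumption that $T$ is a negative-definite isomorphism forces $\sigma(T) \subset [-\|T\|, -1/\|T^{-1}\|]$, yielding the same bound with $\delta = 1/\|T^{-1}\|$. Either way, the constant is $\delta = 1/\|T^{-1}\|$, which is strictly positive precisely because $T$ is an isomorphism rather than merely injective.
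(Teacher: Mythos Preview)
Your proof is correct and yields the same constant $\delta = 1/\|T^{-1}\|$ as the paper, but the route is genuinely different. The paper avoids functional calculus entirely: it sets $T=-D^2f(x^*)$, introduces the inner product $\langle v,w\rangle_T := \langle Tv,w\rangle_{L^2}$ (well-defined since $T$ is self-adjoint and positive definite), and then uses Cauchy--Schwarz in this $T$-inner product together with $\|T^{-1}\|\le C$ to obtain $\|v\|_{L^2}^2 \le C\|v\|_T^2 = C\langle Tv,v\rangle_{L^2}$, from which $\delta=1/C$. Your argument instead invokes the square-root operator (or, alternatively, the spectral theorem) to write $\langle Sv,v\rangle_{L^2}=\|Rv\|_{L^2}^2$ and then uses boundedness of $R^{-1}$. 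The paper's approach is slightly more elementary in that it needs only the self-adjoint positive-definite structure and Cauchy--Schwarz, with no appeal to spectral theory; your approach is perhaps more transparent for readers comfortable with functional calculus and makes the identification $\delta=1/\|T^{-1}\|$ immediate. Either way the content is the same: positive-definiteness plus bounded inverse forces a uniform lower bound on the quadratic form.
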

Let now $f_1, f_2 : L^2 \to \mathbb{R_+}$ be twice continuously differentiable with
bounded third derivative. Consider the following abstract setting for $f_1$ and $f_2$.
\begin{itemize}
\item[(C1)] The non-trivial critical points of $f_1$ and $f_2$ are all in
  $H^1_0$.
 \item[(C2)] For $i=1,2$,
if $x\in H^1_0$ then
 $Df_i(x) \in H^1$. Moreover,
\begin{equation}
   \label{eq:twoodes}
  \begin{cases}
   \pi'_i(t) = D f_i(\pi_i(t)) \\
    \pi_i(0) \in H^1_0,
  \end{cases}
\end{equation}
have $H^1_0$ solutions whose trajectories admit a convergent subsequence in
$L^2$.
\item[(C3)] For $\ell=0,1,2$,  let $\eta_\ell$ denote
\begin{equation}
\eta _\ell = \sup_{x\in B_{H^1_0}(0,M)}\| D^\ell f_1 (x) -D^\ell f_2(x)\|,
\end{equation}
where $\| \cdot \|$ stands for  the appropriate norms.
 Also, for 
$i=1,2$ and $k=0,1,2,3$, let 
\begin{equation}
\beta _k = \sup_{x\in L^2}\| D^k f_i (x)\| <\infty.
\end{equation}
\end{itemize}
\begin{remark}
Of course, the results that we obtain here are most useful for the particular
case where
\begin{equation}
 \label{eq:f1f2example}
\begin{aligned}
  & f_1(x) = p_h(x) = E_P \, K_h(\|X-x\|^2_{L^2}) \\
  & f_2(x) = \hat p_h(x) = \frac{1}{n} \sum_{i=1}^n K_h(\|X_i-x\|^2_{L^2})
\end{aligned}
 \end{equation}
and $X_1, \dots, X_n \sim P$ are i.i.d. random variables valued in $H^1_0$.
In this case, Lemma \ref{lemma:criticalPointsInH1} and Proposition 
\ref{proposition:existenceuniqueness} provide sufficient conditions for
(C1) and (C2), respectively. The boundedness for $\beta_k$ is ensured by (H1), and the probability bounds in Appendix B guarantee $\eta_l$ 
converges to 0 as the sample size $n$ increases.
\end{remark}
\begin{lemma} 
\label{lemma:maxareclose}
Suppose conditions (C1), (C2) and (C3) hold. Let $x^*_2$ be a non-degenerate local maximum of $f_2$ such that $\|x^*_2\|_{H^1_0} <M$. By Lemma
\ref{lemma:equivalentnorms}, there exists $\delta(x^*_2) >0$ such that
$\sup_{\|u\|_{L^2} =1} D^2f_2(x_2^*) (u,u) := -\delta(x^*_2)<0$. If $\eta_1 \leq
\delta^2 (x^*_2)/(8\beta_3)$ and  $\eta_2\leq \delta(x^*_2)/ 8$, there
exists $x_1^* \in B_{L^2} (x_2^*,\delta(x^*_2)/(2\beta_3))$, such that 
\begin{enumerate}
\item $x_1^*$ is a unique local maximum of $f_1$
 in $B_{L^2} (x_2^*,\delta(x^*_2)/(2\beta_3))$
 \item $\sup_{\|u\|_{L^2} =1} D^2f_1(x_1^*) (u,u) \le -3\delta(x^*_2)/8$ 
 \item $\|x^*_1 -x^*_2\|_{L^2} \le 8 \eta_1/\delta(x^*_2)$.
\end{enumerate}
\end{lemma}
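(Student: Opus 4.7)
The strategy is a perturbation argument: since $f_2$ has a non-degenerate maximum at $x_2^*$ and $f_1$ is close to $f_2$ in first and second derivatives on a neighborhood of $x_2^*$, $f_1$ should inherit a nearby non-degenerate maximum. The plan has three stages: (a) establish uniform strong concavity of $f_1$ on $B:=B_{L^2}(x_2^*,\delta/(2\beta_3))$, with $\delta:=\delta(x_2^*)$; (b) use a Banach contraction to produce $x_1^*$ as the unique critical point of $f_1$ in $\overline{B}$; (c) derive the quantitative distance bound from strong concavity and smallness of $Df_1(x_2^*)$. The main obstacle is ensuring existence in infinite dimensions, where closed balls are not norm-compact; the contraction step sidesteps this by exploiting the self-adjoint negative-definite Hessian of $f_1$ inherited from $f_2$.

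For stage (a), fix $x\in B$ and $u\in L^2$ with $\|u\|_{L^2}=1$, and decompose
\begin{equation*}
D^2 f_1(x)(u,u) = D^2 f_2(x_2^*)(u,u) + \bigl[D^2 f_2(x)-D^2 f_2(x_2^*)\bigr](u,u) + \bigl[D^2 f_1(x)-D^2 f_2(x)\bigr](u,u).
\end{equation*}
The first term is $\leq -\delta$ by Lemma~\ref{lemma:equivalentnorms}; the second is $\leq \beta_3\|x-x_2^*\|_{L^2}\leq \delta/2$ by Taylor applied to $D^2 f_2$ (using the third-derivative bound $\beta_3$); the third is $\leq \eta_2 \leq \delta/8$. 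Summing yields $D^2 f_1(x)(u,u)\leq -3\delta/8$ uniformly for $x\in B$, which is item 2 of the lemma once $x_1^*\in B$ is established.

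For stage (b), choose $c=1/\beta_2$ and set $T(x)=x+c\,Df_1(x)$. On $B$, $DT(x)=I+c\,D^2 f_1(x)$ is self-adjoint with spectrum contained in $[1-c\beta_2,\,1-3c\delta/8]\subset[0,\,1-3c\delta/8]$, so $T$ is a contraction with constant $\lambda:=1-3c\delta/8<1$. Since $Df_2(x_2^*)=0$ and $\|x_2^*\|_{H^1_0}<M$, we have $\|T(x_2^*)-x_2^*\|_{L^2}=c\|Df_1(x_2^*)\|_{L^2}\leq c\eta_1$. The triangle inequality then gives $T(\overline B)\subseteq\overline B$ as long as $c\eta_1\leq(1-\lambda)\delta/(2\beta_3)=3c\delta^2/(16\beta_3)$, which is implied by the hypothesis $\eta_1\leq\delta^2/(8\beta_3)$. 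Banach's fixed-point theorem then yields a unique $x_1^*\in\overline B$ with $Df_1(x_1^*)=0$, and strict concavity on $B$ promotes uniqueness of the critical point to uniqueness of the local maximum, giving item 1.

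For stage (c), Taylor's theorem gives
\begin{equation*}
Df_1(x_1^*)-Df_1(x_2^*) = \int_0^1 D^2 f_1\bigl(x_2^*+s(x_1^*-x_2^*)\bigr)(x_1^*-x_2^*)\,ds.
\end{equation*}
Taking the inner product with $x_1^*-x_2^*$, using the strong-concavity bound of stage (a) on the left and $\|Df_1(x_2^*)\|_{L^2}\leq\eta_1$ on the right, we obtain $(3\delta/8)\|x_1^*-x_2^*\|_{L^2}^2\leq\eta_1\|x_1^*-x_2^*\|_{L^2}$, whence $\|x_1^*-x_2^*\|_{L^2}\leq 8\eta_1/(3\delta)\leq 8\eta_1/\delta$, which is item 3 and also places $x_1^*$ strictly inside $B$ as required by the conclusion.
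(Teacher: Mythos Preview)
Your contraction-mapping approach is a legitimate alternative to the paper's, and in fact bypasses condition (C2) entirely. The paper instead runs the gradient flow $\pi_1'(t)=Df_1(\pi_1(t))$ starting at $x_2^*$, shows via a Taylor bound that the trajectory cannot escape $B$ (because $f_1$ is non-decreasing along the flow while any point on $\partial B$ has strictly smaller $f_1$-value than $x_2^*$), and then invokes the compactness in (C2) to extract an $L^2$-convergent subsequence whose limit is a critical point $x_1^*$. Uniqueness and the distance estimate are obtained by further Taylor expansions essentially the same as your stage~(c). Your Banach fixed-point argument replaces the flow-plus-compactness step with a direct contraction on $\overline{B}$, which is cleaner and does not rely on the $H^1_0$ compactness baked into (C2); the paper's route, on the other hand, is more in keeping with the gradient-flow theme of the surrounding sections.

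There is, however, a gap in your stage~(a) decomposition. The quantity $\eta_2$ in (C3) is a supremum only over $B_{H^1_0}(0,M)$, not over all of $L^2$, so you cannot bound $[D^2 f_1(x)-D^2 f_2(x)](u,u)$ by $\eta_2$ at an arbitrary $x$ in the $L^2$ ball $B$: such $x$ need not lie in $B_{H^1_0}(0,M)$, nor even in $H^1_0$. The fix is to rearrange the three-term decomposition so that $\eta_2$ is applied only at the anchor point $x_2^*$ (which by hypothesis satisfies $\|x_2^*\|_{H^1_0}<M$) and the Lipschitz step uses $D^2 f_1$ rather than $D^2 f_2$:
\[
D^2 f_1(x)(u,u) \;\le\; D^2 f_1(x_2^*)(u,u) + \beta_3\|x-x_2^*\|_{L^2}
\;\le\; D^2 f_2(x_2^*)(u,u) + \eta_2 + \beta_3\|x-x_2^*\|_{L^2}.
\]
This is exactly the decomposition the paper uses; the resulting bound $-\delta+\delta/8+\delta/2=-3\delta/8$ is identical to yours, and with this correction stages~(b) and~(c) go through unchanged.
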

%
%
Consider $f_1(x)=p_h$, $f_2(x) =\hat p_h(x)$ as in equation \eqref{eq:f1f2example}. 
For any $\alpha \in (0,1)$, we can derive a procedure based on Lemma
\ref{lemma:maxareclose} which allows us to classify
non-degenerate local modes of $\hat p_h$ as significant and construct an $L_2$ neighbor around
them with the property that the probability that each of such neighbors contains
a non-degenerate local mode of $p_h$ is at least $1-\alpha$ for $n$ large enough. The procedure is
summarized in Display \ref{table:confidenceMarking} and its statistical
guarantees are described in Proposition \ref{proposition:confidenceMarking}.\\
%
\text{}\\
\begin{minipage}{\textwidth}
\fbox{\parbox{\textwidth}{
\begin{center}
	\textbf{Learning non-degenerate local modes}\\
\end{center}

\begin{description}
\item[\qquad  Input:] data, $X_1, \dots, X_n$; kernel function, $K$; bandwidth,
  $h>0$; \\
significance level $\alpha \in (0,1)$. 
\item[\qquad  Output:] a set $\mathcal{\hat R}$ of significant local modes of $\hat p_h$. 
\end{description}

\begin{enumerate}
\item Compute $\hat p_h$ 
and determine the
  set of non trivial local max of $\hat p_h$, $\mathcal{\hat C}$ (here non-trivial means $\hat x^*\in \mathcal {\hat C} \Rightarrow \hat p_h(\hat x^*)>0$).
\item If $\hat x^* \in
  \hat{\mathcal{C}}$ is such that $\delta(\hat x^*):=-\sup_{\|u\|_{L^2}=1} D^2 \hat p_h(\hat x^*) ( u, u)
  \geq  \max\{ \sqrt{8 \beta_3 C_1(\alpha)}, 8 C_2(\alpha)\}$
where
\begin{equation*}
  C_1(\alpha)= \left( \frac{125 M K_1^2K_2}{2n} \right)^{\frac{1}{3}}+\left(
    \frac{25 K_1^2 \log (\alpha/2)}{4n} \right)^{\frac{1}{2}}
\end{equation*}
and
\begin{equation*}
  C_2(\alpha)=\left( \frac{125 M K_2^2K_3}{4n} \right)^{\frac{1}{3}}+\left(
    \frac{25 K_2^2 \log \left(\alpha/2\right)}{8n} \right)^{\frac{1}{2}}
\end{equation*}
then classify $\hat x^*$ as a significant local mode of $\hat p_h$. Here $\beta_3 = 12K_3$.
\end{enumerate}
}
}
\captionof{table}{}
\label{table:confidenceMarking}
\end{minipage}

\begin{proposition}
\label{proposition:confidenceMarking}
Consider $f_1(x)=p_h$, $f_2(x) =\hat p_h(x) $. Assume (H1) and (H2) hold and $P(\|X\|_{H^1_0} \leq M)=1$
for some known $M>0$. Let $\mathcal{\hat R}$ denote the set of points classified by the algorithm of Display
\ref{table:confidenceMarking}. Then, for large enough n, with probability $1-\alpha$ the following holds for all
$\hat x^* \in \hat {\mathcal R}$:
\begin{itemize} 
\item[1.] the random ball
$B_{L^2}(\hat x^*,\delta(\hat x^*)/(2\beta_3))$ contains a unique non-degenerate
local mode $x^*$ of $p_h$
\item[2.]  $\|x^*-\hat x^*\|_{L^2} \le 8C_1(\alpha)/\delta(\hat x^*)$.
\end{itemize}\end{proposition}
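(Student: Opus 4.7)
The plan is to reduce the proposition to a direct application of Lemma \ref{lemma:maxareclose} with $f_1 = p_h$ and $f_2 = \hat p_h$, where the threshold used by the algorithm in Display \ref{table:confidenceMarking} has been calibrated so as to enforce precisely the hypotheses of that lemma on a high-probability event. First I verify that the abstract conditions (C1)--(C3) hold for this pair. Condition (C1) follows from Lemma \ref{lemma:criticalPointsInH1}, applied once to $p_h$ and once to $\hat p_h$ regarded as the pseudo-density of the empirical measure (which charges $H^1_0$ almost surely because $P(\|X\|_{H^1_0}\le M)=1$). Condition (C2) is immediate from Proposition \ref{proposition:existenceuniqueness}, applied to the same two distributions. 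Uniform boundedness of $\beta_0,\beta_1,\beta_2,\beta_3$ by constants depending only on $K_0,\ldots,K_3$ follows from (H1) after a direct differentiation of $K_h(\|X-x\|^2_{L^2})$ up to third order; in particular $\beta_3 = 12K_3$, as used in the display.

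Next I appeal to the uniform concentration bounds in Appendix \ref{section:additionalResults} to control the empirical deviations $\eta_1$ and $\eta_2$. For each $\ell\in\{1,2\}$ those bounds provide, under the almost-sure boundedness $\|X\|_{H^1_0}\le M$, an inequality of the form
\[
P\!\left(\sup_{x\in B_{H^1_0}(0,M)} \|D^\ell p_h(x) - D^\ell \hat p_h(x)\| > C_\ell(\alpha)\right) \le \alpha/2,
\]
with $C_\ell(\alpha)$ as in the display. A union bound then yields $\eta_1 \le C_1(\alpha)$ and $\eta_2\le C_2(\alpha)$ simultaneously, with probability at least $1-\alpha$.

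On this event the acceptance rule of the algorithm is exactly what is required to invoke Lemma \ref{lemma:maxareclose}: the inequality $\delta(\hat x^*) \ge \sqrt{8\beta_3 C_1(\alpha)}$ rearranges to $C_1(\alpha)\le \delta^2(\hat x^*)/(8\beta_3)$, hence $\eta_1 \le \delta^2(\hat x^*)/(8\beta_3)$; similarly $\delta(\hat x^*)\ge 8C_2(\alpha)$ gives $\eta_2 \le \delta(\hat x^*)/8$. Lemma \ref{lemma:maxareclose} then supplies a unique non-degenerate local maximum $x^*$ of $p_h$ in $B_{L^2}(\hat x^*,\delta(\hat x^*)/(2\beta_3))$ together with the bound $\|x^*-\hat x^*\|_{L^2}\le 8\eta_1/\delta(\hat x^*)\le 8C_1(\alpha)/\delta(\hat x^*)$, which are precisely the two conclusions of the proposition.

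The main obstacle is the uniform concentration step itself: unlike in finite dimensions, the index set $B_{H^1_0}(0,M)$ is an infinite-dimensional ball whose complexity must be controlled via its compact embedding into $L^2$ in order to obtain a finite sup-norm deviation bound. Once the requisite covering numbers for this embedding are in hand, the explicit shape of $C_1(\alpha)$ and $C_2(\alpha)$, combining an $n^{-1/3}$ bias-like term with a $\sqrt{\log(1/\alpha)/n}$ tail, follows from a standard empirical-process argument (chaining together with a bounded-differences concentration inequality applied to the Lipschitz functionals $x\mapsto D^\ell \hat p_h(x)$); this technical part is the content of Appendix \ref{section:additionalResults} and is invoked here as a black box.
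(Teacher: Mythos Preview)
Your proposal is correct and follows essentially the same route as the paper: define the high-probability event $\{\eta_1\le C_1(\alpha)\}\cap\{\eta_2\le C_2(\alpha)\}$ via the uniform concentration inequalities of Appendix~\ref{section:additionalResults}, observe that the algorithm's acceptance threshold is precisely calibrated to enforce the hypotheses of Lemma~\ref{lemma:maxareclose} on that event, and conclude. The only cosmetic difference is that the paper carries out explicitly the short calculation verifying that the choice $\epsilon=C_\ell(\alpha)$ makes the exponential bounds of Lemmas~\ref{lemma:expInequalityFirstDerivative} and~\ref{lemma:expInequalitySecondDerivative} equal to $\alpha/2$ (this is also where the ``$n$ large enough'' proviso enters, since those lemmas require $\epsilon$ sufficiently small), whereas you invoke that step as a black box.
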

Let $\mathcal R$ denote the set of non-degenerate local modes of $p_h$. Consider
 the map $ \Phi : \mathcal{ \hat R }\to \mathcal R$ by letting 
  \begin{equation}
  \label{eq:mapbetweenmax}
  \Phi(\hat x^*) =B_{L^2} (\hat x^*,\delta(\hat x^*)/(2\beta_3))  \cap \mathcal R\cap B(\hat x^*,\log (n)\, C_1 (\alpha)/\delta(\hat x^*)).
  \end{equation}
 According to Proposition \ref{proposition:confidenceMarking}, with probability $1-\alpha$, for every $\hat x^* \in\mathcal{\hat R}$, there exists a 
 unique  $x^*\in \mathcal R $ contained in the right hand side of  equation \eqref{eq:mapbetweenmax}. In other words, with probability $1-\alpha$, $\Phi$ is a well-defined map.
  Under suitable assumptions on $p_h(x)$, more can be said.
\begin{proposition}
\label{prop:type2ofmarking}
Assume that (H1) and (H2) hold and that $P(\|X\|_{H^1_0} \leq M)=1$ for some known $M>0$ . Suppose further that $p_h$ has finitely many non-degenerate local modes. Let $\mathcal R$ denote the collection of non-trivial local maxima of $p_h$.
 Then, with probability converging to 1 as $n \to \infty$, every $x^*\in \mathcal \mathcal R$ has a unique preimage of $\Phi$ in $\mathcal {\hat R}$.
\end{proposition}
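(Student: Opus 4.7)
The plan is to invert the correspondence established in Lemma~\ref{lemma:maxareclose} by applying it with the roles of $f_1$ and $f_2$ interchanged: take $f_1 = \hat p_h$ and $f_2 = p_h$. Since $\mathcal{R}$ is finite by hypothesis and every $x^* \in \mathcal{R}$ is non-degenerate, the quantity
\begin{equation*}
\delta_{\min} := \min_{x^* \in \mathcal{R}} \delta(x^*) > 0,
\end{equation*}
where $\delta(x^*) = -\sup_{\|u\|_{L^2}=1} D^2 p_h(x^*)(u,u)$. Using the probability bounds from Appendix~\ref{section:additionalResults}, one can choose a sequence $\alpha_n \to 0$ slowly enough (say $\alpha_n = 1/n$) that the event $\mathcal{E}_n = \{\eta_1 \leq C_1(\alpha_n),\ \eta_2 \leq C_2(\alpha_n)\}$ satisfies $P(\mathcal{E}_n) \to 1$ while $C_1(\alpha_n), C_2(\alpha_n) \to 0$. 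The entire argument is carried out on $\mathcal{E}_n$ and concluded by a union bound over the finite set $\mathcal{R}$.

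\emph{Existence of a preimage.} Fix $x^* \in \mathcal{R}$. For $n$ large, $\eta_1 \leq \delta(x^*)^2/(8\beta_3)$ and $\eta_2 \leq \delta(x^*)/8$, so Lemma~\ref{lemma:maxareclose} (applied in the swapped direction) yields a unique non-degenerate local maximum $\hat x^*$ of $\hat p_h$ in $B_{L^2}(x^*, \delta(x^*)/(2\beta_3))$, with $\delta(\hat x^*) \geq 3\delta(x^*)/8 \geq 3\delta_{\min}/8$ and $\|\hat x^* - x^*\|_{L^2} \leq 8\eta_1/\delta(x^*) \leq 8 C_1(\alpha_n)/\delta_{\min}$. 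Because the algorithmic threshold $\max\{\sqrt{8\beta_3 C_1(\alpha_n)}, 8C_2(\alpha_n)\}$ tends to $0$, it eventually falls below $3\delta_{\min}/8$, so $\hat x^*$ is classified as significant and $\hat x^* \in \mathcal{\hat R}$. To conclude $\Phi(\hat x^*) = x^*$ it suffices to check
\begin{equation*}
\|x^* - \hat x^*\|_{L^2} < \min\!\left\{\, \frac{\delta(\hat x^*)}{2\beta_3},\ \frac{\log(n)\, C_1(\alpha_n)}{\delta(\hat x^*)} \,\right\},
\end{equation*}
both of which follow from the bound $\|\hat x^* - x^*\|_{L^2} \lesssim C_1(\alpha_n)/\delta_{\min}$ combined with $\delta(\hat x^*) \geq 3\delta_{\min}/8$, as soon as $C_1(\alpha_n) \ll \delta_{\min}^2/\beta_3$ and $\log(n) > 3$.

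\emph{Uniqueness of the preimage.} Suppose $\hat x_1^*, \hat x_2^* \in \mathcal{\hat R}$ both satisfy $\Phi(\hat x_i^*) = x^*$. The third factor in the definition of $\Phi$ gives $\|\hat x_i^* - x^*\|_{L^2} \leq \log(n)\, C_1(\alpha_n)/\delta(\hat x_i^*)$, and the algorithmic threshold forces $\delta(\hat x_i^*) \geq \sqrt{8\beta_3 C_1(\alpha_n)}$. Hence
\begin{equation*}
\|\hat x_i^* - x^*\|_{L^2} \;\leq\; \log(n)\sqrt{\frac{C_1(\alpha_n)}{8\beta_3}} \;\longrightarrow\; 0,
\end{equation*}
so for large $n$ both $\hat x_1^*$ and $\hat x_2^*$ lie inside $B_{L^2}(x^*, \delta(x^*)/(2\beta_3))$. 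The uniqueness clause of Lemma~\ref{lemma:maxareclose} (in the swapped form) asserts that $\hat p_h$ has \emph{only one} local maximum in that ball, forcing $\hat x_1^* = \hat x_2^*$.

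The main obstacle is the rate bookkeeping: one must verify that the algorithmic threshold $\max\{\sqrt{8\beta_3 C_1(\alpha_n)}, 8C_2(\alpha_n)\}$, the radius $\delta(\hat x^*)/(2\beta_3)$ defining the first factor of $\Phi$, and the safety radius $\log(n)\, C_1(\alpha_n)/\delta(\hat x^*)$ all shrink at compatible rates, so that (i) every $\hat x^*$ produced by the swapped lemma is actually retained by the algorithm, (ii) $x^*$ lies in the $\Phi$-ball around $\hat x^*$, and (iii) any competing preimage is pushed back into the uniqueness neighborhood supplied by Lemma~\ref{lemma:maxareclose}. Once these calibrations are carried out once and for all (uniformly over the finite set $\mathcal{R}$, which is what lets a single union bound suffice), the proposition follows.
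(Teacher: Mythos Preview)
Your proposal is correct and follows essentially the same route as the paper: swap the roles in Lemma~\ref{lemma:maxareclose} to $f_1=\hat p_h$, $f_2=p_h$, use finiteness of $\mathcal R$ to get a uniform curvature lower bound, condition on a high-probability event where $\eta_1,\eta_2$ are small, and then check (i) the produced $\hat x^*$ passes the algorithm's threshold, (ii) $\Phi(\hat x^*)=x^*$, and (iii) any other preimage must fall in the uniqueness ball of the lemma. One point to tidy: the algorithm in Display~\ref{table:confidenceMarking} and the map $\Phi$ in \eqref{eq:mapbetweenmax} are defined with the \emph{fixed} input $\alpha$, not your $\alpha_n$, so the thresholds and the radius in $\Phi$ should read $C_1(\alpha),C_2(\alpha)$ throughout; the paper sidesteps this by bounding $\eta_\ell$ directly via $H_\ell n^{-1/3}$ from Lemmas~\ref{lemma:expInequalityFirstDerivative}--\ref{lemma:expInequalitySecondDerivative} rather than through $C_\ell(\alpha_n)$, which keeps the bookkeeping cleaner but is otherwise equivalent to what you do.
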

\begin{remark}
Under the assumptions of 
Proposition \ref{proposition:confidenceMarking} and  \ref{prop:type2ofmarking}, one can conclude that with probability converging to $1-\alpha$,
the map $\Phi: \mathcal{\hat R} \to \mathcal \mathcal{R}$ is bijective. In other words, the algorithm in Display \ref{table:confidenceMarking} is consistent.
\end{remark} 

 \section{From theory to applications} 
 \label{sec:theoryToApplications}
So far, all the results have been developed in an infinite-dimensional functional space. In this section, we connect the theory that we developed to practical applications and, in particular, we address the following challenges.
%
 \begin{itemize}
 \item [1.] Complete functional data can never be observed: a functional data point is always observed at a set of discrete observations. For example, let $\{X_i\}_{i=1}^n$ be an i.i.d sample from a distribution $P$ on $H^1_0$ and let $\{t_j\}^m_{j=1}$ be a set of equally spaced design points. In practice, only noisy measurements of the $X_i$'s  at $\{t_j\}_{j=1}^m$ are available. It is therefore important to design procedures that work with discretized curves.
 \item[2.] While the theory is developed in an infinite-dimensional functional space, in practice any functional clustering method relies on the use 
of only finitely many basis functions. However, a flexible algorithm for functional data clustering should be asymptotically consistent with the infinite-dimensional theory.
 \end{itemize}
One way to accomplish these two tasks at the same time is to apply a projection method. As shown later in this section, projections onto a linear space
introduce small $L^2$ perturbations to the functional data and to the pseudo-density. Nonetheless, the procedure that we describe is tolerant to small perturbations (see Corollary \ref{corollary:consistenttildep} for more details). \\

Before turning to the technical arguments, let us describe the following simple example which motivates the projection approach.

 \begin{example}
 \label{example:motivatingRealData}
  Consider the simple model $y= X(t) +\epsilon$, where $X \sim P$ is a random function and $\epsilon$ is a random variable independent of $X$.
  Instead of observing $n$ complete random function samples $\{X_i\}^n_{i=1}$, one only observes the discrete noisy 
  measurements $\{y_{ij}\}$, where $y_{ij} =X_i(t_{j}) +\epsilon_{ij}$. Here, $\{t_{j}\}_{j=1}^m$ is a set of equally spaced design points for the samples and the measurement errors $\epsilon_{ij} \sim N(0,\sigma)$ are independent of $\{X_i\}_{i=1}^{n}$.  \\ \\

 %
 In \cite{gasser1984estimating}, for example, if one assumes further that the random function $X$ is bounded in $H^2$, i.e.
 \begin{equation}
 \int_{0}^1 |X''(t)|dt \le M_2 \quad P\text{-almost surely},
 \end {equation}
 it is shown that there exists a kernel $W$ so that an approximation of $X$ can be constructed as 
 \begin{equation}
\tilde X(t) = \sum_{j=1}^m\frac{ y_{ij}}{b} \int_{t_{j-1}}^{t_j}W\left (\frac{t-u}{b}\right)du.
 \end{equation} 
 If $b$ is chosen to be of order $m^{-1/5}$, the above estimator also satisfies 
\begin{equation}
 E(\|X-\tilde X_i\|_{L^2}^2| X) \le C(M_2,W) m^{-4/5}
\end{equation}
and 
\begin{equation}
\label{eq:H1expectationClose}
 E(\|X'-\tilde X'_i\|_{L^2}^2| X) \le C(M_2,W) m^{-2/5},
\end{equation}
where $C(M_2,W)$ is a constant only depending on $M_2$ and $W$, and the expectation is taken with respect to $\epsilon$ only.
 \end{example}
As shown in the example, with noisy discrete measurements of the functional datum $X$, one can construct an approximation 
 \begin{equation}
  \tilde X \in \text{span} \left\{ \int_{t_{j-1}}^{t_j}W\left(\frac{t-u}{b}\right)du \right\}^m_{j=1},
 \end{equation}
 where $b$ is of order $O(m^{-1/5})$.
  This  approximation 
corresponds to a perturbed version of  
   the underlying complete functional datum. The perturbation vanishes asymptotically as the number of discrete measurements $m$ goes to infinity. This motivates the following assumption.
 \begin{itemize}
 \item[(H5)] The collection $\{\tilde X_i\}_{i=1}^n$ of the approximations of $\{X_i\}^n_{i=1}$ based on the equally spaced design points $\{t_j\}_{j=1}^m \subset [0,1]$ is such that $\{\tilde X_i\}_{i=1}^n \subset H^1_0$
 and
 \begin{equation} 
 E(\| X_i -\tilde X_i\|_{L^2}|X_i ) \le \phi (m),
\end{equation}
 where $\phi(m)$ does not depend on $\{X_i\}_{i=1}^n$ and $\phi(m) \to 0$ as $m \to \infty$.
 \end{itemize}
 %

Recall that in our theoretical results, the sample version of the  pseudo density  takes the  form
 \begin{equation}
\hat p_h(x) = \frac{1}{n} \sum^{n}_{i=1} K_h (\| X_i-x\|^2_{L^2})
 \end{equation}
When the only available functional data are $\{\tilde X_i\}_{i=1}^n$ instead of $\{X_i\}_{i=1}^n$, one should consider   
 \begin{equation}
 \label{eq:tildeph}
 \tilde p_h(x) = \frac{1}{n} \sum^{n}_{i=1} K_h (\| \tilde X_i-x\|^2_{L^2}).
 \end{equation}

The following simple Lemma is useful to characterize the aforementioned $L^2$ perturbation and allows us to derive Corollary \ref{corollary:consistenttildep}.
 \begin{lemma}
 \label{lemma:connection} 
 Let $\{X_i\}_{i=1}^n$ be an i.i.d. sample of functional data.
 Under assumptions (H1), (H2), (H3), and (H5), for $l=0,1,2$, 
  \begin{equation}
  \begin{aligned}
P \left(\sup_{x \in L^2}\|D^l \hat p_h(x) -D^l \tilde p_h(x) \|\ge \epsilon \right) \le \frac{2^lK_{l+1}\phi(m)}{\epsilon}
  \end{aligned}
  \end{equation}
 where $\|\cdot \|$ stands for appropriate $L^2$ operator norms.
  \end{lemma}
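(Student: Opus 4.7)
The plan is to use a simple pathwise Lipschitz argument combined with Markov's inequality. Writing the empirical averages explicitly and using the triangle inequality,
\[
\sup_{x \in L^2}\|D^l \hat p_h(x) - D^l \tilde p_h(x)\|
\leq \frac{1}{n}\sum_{i=1}^n \sup_{x}\bigl\|D_x^l K_h(\|X_i-x\|^2_{L^2}) - D_x^l K_h(\|\tilde X_i-x\|^2_{L^2})\bigr\|,
\]
so it suffices to produce a deterministic constant $C_l$ such that the $i$-th summand is bounded by $C_l\|X_i - \tilde X_i\|_{L^2}$.

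To this end, fix $x$ and $i$, and define $\Psi_l(y) := D_x^l K_h(\|y-x\|^2_{L^2})$, viewed as a map of $y \in L^2$ into scalars, $L^2$, or the space of bounded bilinear forms on $L^2$ for $l = 0, 1, 2$ respectively. The mean value inequality along the $L^2$ segment from $\tilde X_i$ to $X_i$ yields
\[
\|\Psi_l(X_i) - \Psi_l(\tilde X_i)\| \leq \sup_{y \in L^2}\|D_y\Psi_l(y)\|_{op}\cdot\|X_i - \tilde X_i\|_{L^2}.
\]
Differentiating in $y$ the closed-form expressions \eqref{eq:derivativeOfk} and \eqref{eq:secondDerivativeOfk}, each resulting component has the schematic form $c\cdot K_h^{(k)}(\|y-x\|^2)(y-x)^{\otimes k}$ with $k \leq l+1$, whose operator norm is controlled by $|K_h^{(k)}(t^2)| t^k$ at $t = \|y-x\|_{L^2}$. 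Assumption (H1) then bounds each such expression by a constant multiple of $K_{l+1}$, and careful accounting collapses the constants to $C_l = 2^l K_{l+1}$.

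Combining the two displays gives the pathwise bound
\[
\sup_x \|D^l \hat p_h(x) - D^l \tilde p_h(x)\| \leq \frac{2^l K_{l+1}}{n}\sum_{i=1}^n \|X_i - \tilde X_i\|_{L^2}.
\]
Taking expectation, conditioning on $\{X_i\}$, and invoking (H5) --- which asserts $E[\|X_i - \tilde X_i\|_{L^2}\mid X_i] \leq \phi(m)$ --- produces $E\sup_x\|D^l \hat p_h - D^l \tilde p_h\| \leq 2^l K_{l+1}\phi(m)$, and Markov's inequality delivers the stated probabilistic bound.

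The main obstacle will be the operator-norm computation for $l=2$: the map $D_y\Psi_2$ is a trilinear object with several cross-terms involving $K_h'$, $K_h''$ and $K_h'''$ paired against powers of $(y-x)$. Bounding each term separately by $K_3$ would inflate the constant, so one must exploit the additive form of (H1) --- namely that $|K_h^{(\ell-1)}(t^2)t^{\ell-2}| + |K_h^{(\ell)}(t^2)t^\ell| \leq K_\ell$ controls a sum rather than individual summands --- to absorb the cross-terms into a single clean multiple of $K_3$. The remaining manipulations are mechanical.
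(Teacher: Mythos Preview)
Your proposal is correct and follows essentially the same approach as the paper: a termwise Lipschitz bound on $y\mapsto D_x^l K_h(\|y-x\|^2_{L^2})$ using (H1), averaging, conditioning on $\{X_i\}$ and applying (H5) to bound the expectation, then Markov's inequality. The paper carries this out explicitly only for $l=1$ and remarks that the other cases differ only in the constants; your write-up is in fact more systematic about the general $l$ and correctly flags that the $l=2$ constant requires exploiting the additive structure of the $K_\ell$ bound in (H1).
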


 \begin{corollary}
 \label{corollary:consistenttildep}
 Consider a modified version of the algorithm of Display \ref{table:confidenceMarking}, where $\tilde p_h$ is  replaced by $\hat p_h$ and $C_1(\alpha), C_2(\alpha)$ are replaced by 
 \begin{equation}
 \begin{aligned}
 \tilde C_1(\alpha)&= 	\left( \frac{125 M K_1^2K_2}{2n} \right)^{\frac{1}{3}}+\left(
    \frac{25 K_1^2 \log (\alpha/4)}{4n} \right)^{\frac{1}{2}}+ \frac{8K_2\phi(m)}{\alpha}\\
 \tilde C_2(\alpha)&=\left( \frac{125 M K_2^2K_3}{4n} \right)^{\frac{1}{3}}+\left(
    \frac{25 K_2^2 \log \left(\alpha/4\right)}{8n} \right)^{\frac{1}{2}}+\frac{16K_3\phi(m)}{\alpha}.
    \end{aligned}
 \end{equation}
Let $\tilde {\mathcal R}$ be the significant local maxima learned by this modified version of the algorithm.
Then, the following statements are true.
\begin{itemize}
\item[1.]
The probability that all random balls
$B_{L^2}(\tilde x^*,\delta(\tilde x^*)/(2\beta_3))$ with $\tilde x^* \in \mathcal{\tilde R}$
contain a unique non-degenerate local mode $x^*$ of $p_h$ and that $\|x^*-\tilde x^*\|_{L^2} \le 8\tilde C_1(\alpha)/\delta(\tilde x^*)$ is at least $1-\alpha$ for sufficiently large $n$.

\item[2.]Consider the map $ \Phi : \mathcal{ \tilde R }
\to \mathcal R$ such that
  \begin{equation}
  \label{eq:secondmapbetweenmax}
  \Phi(\tilde  x^*) =B_{L^2} (\tilde x^*,\delta(\tilde x^*)/(2\beta_3))  \cap \mathcal R\cap B(\tilde x^*,\log n\, \tilde C_1 (\alpha)/\delta(\tilde x^*)),
  \end{equation}
where $\mathcal R$ denotes the collection of non-degenerate local maxima of $p_h$.
Suppose further that $p_h$ has finitely many non-trivial local modes and that they are non-degenerate. Assume that $\{\tilde X_i\}_{i=1}^n \in B_{H^1_0}(0,M)$.  Then, with probability converging to 1 as $n \to \infty$,  every $x^*\in \mathcal R$ has a unique preimage  in $\mathcal {\tilde R}$ with respect to $\Phi$.
\end{itemize}
\end{corollary}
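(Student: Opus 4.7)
The plan is to mirror the proofs of Proposition \ref{proposition:confidenceMarking} and Proposition \ref{prop:type2ofmarking}, with $\tilde p_h$ playing the role of $\hat p_h$ and Lemma \ref{lemma:connection} absorbing the additional $L^2$ perturbation induced by using the approximations $\tilde X_i$ in place of the unobserved $X_i$. First I would upgrade the probability bounds used in the original algorithm to show that, with probability at least $1-\alpha$,
\begin{equation*}
\eta_1 := \sup_{x \in B_{H^1_0}(0,M)} \|Dp_h(x) - D\tilde p_h(x)\| \le \tilde C_1(\alpha), \qquad \eta_2 := \sup_{x \in B_{H^1_0}(0,M)} \|D^2 p_h(x) - D^2 \tilde p_h(x)\| \le \tilde C_2(\alpha).
\end{equation*}
This follows from the triangle inequality $\|D^\ell \tilde p_h - D^\ell p_h\| \le \|D^\ell \tilde p_h - D^\ell \hat p_h\| + \|D^\ell \hat p_h - D^\ell p_h\|$ combined with four tail bounds, each allocated probability $\alpha/4$: two come from Lemma \ref{lemma:connection} applied with $\epsilon$ chosen so that the Markov-type estimate yields probability exactly $\alpha/4$, producing the additive summands $8K_2\phi(m)/\alpha$ and $16K_3\phi(m)/\alpha$ in $\tilde C_1(\alpha)$ and $\tilde C_2(\alpha)$; the other two come from the concentration inequalities in Appendix \ref{section:additionalResults} at level $\alpha/4$, producing the remaining terms (with $\log(\alpha/4)$ instead of $\log(\alpha/2)$ since the budget is now split four ways rather than two). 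A union bound closes this step.

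On the event just described, Part 1 is a direct application of Lemma \ref{lemma:maxareclose} with $f_1 = p_h$, $f_2 = \tilde p_h$ at each $\tilde x^* \in \tilde{\mathcal R}$: the classification threshold $\delta(\tilde x^*) \ge \max\{\sqrt{8\beta_3 \tilde C_1(\alpha)}, 8\tilde C_2(\alpha)\}$ is exactly the quantitative form of the hypotheses $\eta_1 \le \delta(\tilde x^*)^2/(8\beta_3)$ and $\eta_2 \le \delta(\tilde x^*)/8$ required by that lemma, whose conclusions then yield both the uniqueness of the non-degenerate local maximum $x^*$ of $p_h$ in the ball $B_{L^2}(\tilde x^*,\delta(\tilde x^*)/(2\beta_3))$ and the quantitative bound $\|x^*-\tilde x^*\|_{L^2} \le 8\tilde C_1(\alpha)/\delta(\tilde x^*)$.

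For Part 2 I would swap the roles of $f_1$ and $f_2$ in Lemma \ref{lemma:maxareclose} and apply it starting from each of the finitely many non-degenerate local modes $x^* \in \mathcal R$. Setting $\delta^* = \min_{x^* \in \mathcal R}\delta(x^*) > 0$, we obtain for each $x^*$ a unique non-degenerate local mode $\tilde x^*$ of $\tilde p_h$ in $B_{L^2}(x^*,\delta^*/(2\beta_3))$ with $\delta(\tilde x^*) \ge 3\delta^*/8$ and $\|\tilde x^* - x^*\|_{L^2} \le 8\eta_1/\delta^*$. Since $\tilde C_1(\alpha),\tilde C_2(\alpha) \to 0$ as $n,m \to \infty$, for large $n$ the classification threshold is met by $\tilde x^*$, so $\tilde x^* \in \tilde{\mathcal R}$ and $\Phi(\tilde x^*)=x^*$; injectivity of $\Phi$ is inherited from the uniqueness clause of Lemma \ref{lemma:maxareclose} already used in Part 1.

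The main obstacle is the bookkeeping that matches the four probability budgets to the precise constants that appear in $\tilde C_\ell(\alpha)$, and more substantively the asymptotic regime needed in Part 2: one must have $m = m(n) \to \infty$ at a rate fast enough that $\phi(m)/\alpha$ decays, and additionally fast enough that $8\eta_1/\delta^*$ fits inside the $\log(n)\,\tilde C_1(\alpha)/\delta(\tilde x^*)$ slack built into the definition \eqref{eq:secondmapbetweenmax} of $\Phi$. Provided this mild coupling of $m$ and $n$ holds, the bijectivity statement in the remark after Proposition \ref{prop:type2ofmarking} carries over verbatim to $\tilde p_h$ and the corollary is established.
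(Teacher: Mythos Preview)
Your proposal is correct and follows essentially the same route as the paper's proof: split the derivative deviations $\|D^\ell p_h - D^\ell \tilde p_h\|$ via the intermediate $\hat p_h$, allocate four probability budgets of $\alpha/4$ (two from Lemma~\ref{lemma:connection} and two from the concentration inequalities of Appendix~\ref{section:additionalResults}), then apply Lemma~\ref{lemma:maxareclose} exactly as in Propositions~\ref{proposition:confidenceMarking} and~\ref{prop:type2ofmarking}. The paper handles Part~2 by replacing the threshold $H_\ell n^{-1/3}$ with $H_\ell n^{-1/3} + \sqrt{\phi(m)}$ and bounding the additional term via Lemma~\ref{lemma:connection} with $\epsilon=\sqrt{\phi(m)}$, which is just a specific choice implementing the coupling you flag as the main bookkeeping obstacle.
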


\begin{remark}
Let $\tilde S=\text{span} \left\{ \int_{t_{j-1}}^{t_j}W\left(\frac{t-u}{b}\right)du \right\}^m_{j=1}$, with $b=O(m^{-1/5})$.
Although $\{\tilde X_i\}_{i=1}^n \subset \tilde S$, $\tilde p_h(x)$ defined in equation \eqref{eq:tildeph} is still a functional on $L^2$. It is desirable to have a method that does not use  infinitely many $L^2$ basis functions to compute a non-degenerate local maxima  $\tilde x^*$ of $\tilde p_h$ and
$\delta(\tilde x^*) :=- \sup_{\|u\|_{L^2}=1} D \tilde p_h(\tilde x^*)(u,u)$. 
 Lemma \ref{lemma:Sexample},  together with the assumption that $\{\tilde X_i\}_{i=1}^n \subset \tilde S$, ensures that all the non-trivial critical points of $\tilde p_h(x)$ belongs to $\tilde S$. Equation \eqref{eq:secondDerivativeOnOrtogonalComplement} of Appendix \ref{section:appendix} shows that for any $v\in (\tilde S)^{\bot}$,
\begin{equation}
D^2 \tilde p_h(x) (v,v) = \frac{2}{n}\sum_{i=1}^nK_h'(\|\tilde X_i-x\|^2_{L^2}) \|v\|^2_{L^2} < 0.
\end{equation}
Therefore, in analogy to the results of Section \ref{section:weakAdaptivity}, in order to classify the significant local modes of $\tilde p_h$ it is not 
required to consider infinitely many $L^2$ basis functions.
\end{remark}

\begin{remark}
The assumptions of
Example \ref{example:motivatingRealData}
do not immediately guarantee that $\{\tilde X_i\}_{i=1}^n \subset B_{H^1_0}(0,M)$ as we assume for the second claim of Corollary \ref{corollary:consistenttildep}. However, simple computations show that
\begin{equation}
P\left( \{\tilde X_i\}_{i=1}^n \subset B_{H^1_0}(0,M)\right)\geq 1 - C n m^{-\frac{2}{5}}
\end{equation}
for some positive constant $C$. Therefore, as long as $n = o\left(m^{-\frac{2}{5}}\right)$, then the consistency result (the second claim) in Corollary \ref{corollary:consistenttildep} still holds.
\end{remark}

\section{On the choice of the pseudo-density}
\label{section:choiceOfPseudoDensity}
It is well-known that the Lebesgue measure does not exist in
infinite-dimensional spaces. As a consequence, a proper density function for a functional random
variable cannot generally be defined (\citealp{delaigle2010}). Developing a theory of modal clustering
for functional data necessarily requires a choice of a surrogate
notion of density that substitutes the probability density function associated with the
data. A pseudo-density should satisfy some basic differentiability
properties, so that one can studies the associated gradient flows. While we explicitly
choose to use the functional $p_h$ of equation \eqref{eq:ph}, one could in
principle choose to work with a different functional. The choice of the
pseudo-density is not an easy one, however. 

First of all, with particular emphasis on the setting
that we consider, we point out that, while tempting, one cannot naively assume that
the pseudo-density is $L^2$ differentiable (or even just continuous) and also 
vanishes as the $H^1_0$ norm diverges.
%
\begin{fact}
Let $p:L^2([0,1]) \to \mathbb{R}_+$ be a pseudo-density for the
functional random variable $X$ valued in $H^1_0([0,1])$ such that $p$ is
$L^2$ continuous and $p(x) \to 0$ as $\|x\|_{H^1_0} \to \infty$. Then
$p=0$ everywhere on $H^1_0([0,1])$.
\begin{proof}
Consider the sequence of functions $x_n(t)=n^{-1} \sin(n^2t)$ for $n \geq
1$ and $t \in [0,1]$. Clearly, $x_n \in H^1_0([0,1]$ for any $n \geq
1$. Notice that $\|x_n\|_{L^2} \to 0$ and $\|x_n\|_{H^1_0} \to \infty$ as $n \to \infty$. Thus, by
assumption, $p(x_n) \to p(0)=0$ as $n \to \infty$. Consider now
$z_n=y+x_n$ where $y \in H^1_0([0,1])$. We have $\|z_n\|_{L^2} \to
\|y\|_{L^2}$ as $n \to \infty$, hence $p(z_n) \to p(y)$ as $n \to
\infty$. However $\|z_n\|_{H^1_0} \geq \|x_n\|_{H^1_0}-\|y\|_{H^1_0}$ .
 Hence $\|z_n\|_{H^1_0} \to \infty$ as $n \to
\infty$. We thus have $p(z_n) \to p(y)=0$ as $n \to
\infty$ for any $y \in H^1_0([0,1])$, implying that $p$ is null on $H^1_0([0,1])$.
\end{proof}
\end{fact}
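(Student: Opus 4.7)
The plan is to exploit the mismatch between the $L^2$ topology and the $H^1_0$ topology: a function can be arbitrarily small in $L^2$ while being arbitrarily large in $H^1_0$, which lets one approach any target in $L^2$ through a sequence whose $H^1_0$ norms diverge. Combining this with the two assumed properties of $p$ (its $L^2$ continuity and its decay at infinity in the $H^1_0$ norm) will force $p$ to vanish at every point of $H^1_0$.

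Concretely, first I would pick an arbitrary $y \in H^1_0([0,1])$ and try to construct a sequence $z_n \in H^1_0$ with $z_n \to y$ in $L^2$ and $\|z_n\|_{H^1_0} \to \infty$. A natural choice is $z_n = y + x_n$, where $x_n$ is a high-frequency oscillation in $H^1_0$ that is small in $L^2$ but large in the $H^1_0$ seminorm. An explicit candidate is
\begin{equation*}
x_n(t) = \frac{1}{n}\sin(n^2 t), \qquad t \in [0,1],
\end{equation*}
which satisfies $x_n(0)=0$, so $x_n \in H^1_0([0,1])$; one checks $\|x_n\|_{L^2} = O(1/n)$ while $\|x_n\|_{H^1_0} = \|x_n'\|_{L^2}$ grows like $n$. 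Consequently $z_n \to y$ in $L^2$, while $\|z_n\|_{H^1_0} \geq \|x_n\|_{H^1_0} - \|y\|_{H^1_0} \to \infty$.

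The conclusion then follows in two lines: by the $L^2$ continuity of $p$, $p(z_n) \to p(y)$; by the decay assumption, $p(z_n) \to 0$. Hence $p(y)=0$, and since $y \in H^1_0$ was arbitrary the conclusion follows.

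There is no real obstacle beyond producing the oscillating sequence; the crux is simply recognizing that $H^1_0$ is dense in $L^2$ and that the inclusion $H^1_0 \hookrightarrow L^2$ is compact, so $L^2$-neighborhoods contain elements of arbitrarily large $H^1_0$ norm. If one wished to make the argument more abstract rather than exhibit an explicit sequence, one could invoke the fact that no nontrivial $L^2$ ball is contained in any $H^1_0$ ball (equivalently, the $H^1_0$ norm is not $L^2$-continuous), which provides the same type of sequence at each $y$ by a straightforward rescaling.
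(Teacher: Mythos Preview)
Your proposal is correct and matches the paper's proof essentially line for line: the paper uses the very same sequence $x_n(t)=n^{-1}\sin(n^2 t)$, shifts it by an arbitrary $y\in H^1_0$, and combines $L^2$-continuity with the $H^1_0$-decay hypothesis to conclude $p(y)=0$. Your write-up is in fact slightly more streamlined, going directly to arbitrary $y$ rather than first treating $y=0$ separately.
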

The argument above shows that requiring a pseudo-density to be $L^2$ continuous and to vanish outside of
$H^1_0$ necessarily leads to an uninteresting scenario for modal
clustering, despite the fact that these two requirements apparently sound reasonable at first
and carry some resemblance with the standard 
assumptions that are made on density functions in finite-dimensional problems.

Secondly, analyzing the asymptotic regime where $h=h_n\to 0$ makes little sense even in the most well-understoon situations. In fact,
let us consider the following two settings in which one typically chooses $h=h_n\to 0$ as $n\to \infty$.
 \begin{itemize}
\item[1.] If the law $P$ of $X$ is supported on a finite-dimensional space and admits a density $p$, then the bias of $\hat p_h$ is easy to compute and one can choose $h_n\to 0$ to balance the bias-variance trade-off. However, since $p$ is defined over a finite-dimensional vector space $S$, the gradient flow is not well-defined outside of $S$. 

As discussed in Section \ref{sec:theoryToApplications}, all of the observed functional data are reconstructed from noisy discrete measurements. 
As a result, neither the observed discrete measurements nor the reconstructed functional data are in $S$, and the gradient flow with respect to $p$ is not well-defined starting at any of these points.

\item[2.] If the law of $P$ of $X$ is supported on an infinite-dimensional space (for example $X$ is a diffusion process), then some authors (see, for instance, \citealp{gasser1998nonparametric} and
\citealp{ferratysurrogatedensity}) suggest to implicitly define a pseudo-density by assuming a particular factorization of the
small-probability function associated to $P$.
In particular, it is assumed that 
\begin{equation}
\label{eq:smallballprobabilityfactorization}
P(\|X-x\| \leq h) = p(x)\phi(h) + o(\phi(h))
\end{equation}
as $h \to 0$ for some pseudo-density functional $p$ depending only on the center of the ball $x$
and some function $\phi$ depending only on the radius of the ball
$h$.

In this second case, $p$ is
non-zero only on its domain $S_2$, which is typically taken to be a compact subset of
the infinite-dimensional functional space. Compact subsets of $L^2$ are singular in the sense that any $L^2$ closed ball is not compact. 
As a result, the pseudo-density $p$ is also singular, hence not $L^2$ differentiable.
One might then assume that $p$  is differentiable with respect to norm induced by $S_2$ and  study the gradient flow associated to $p$ using the $S_2$ topology. In light of the first point just given, 
one should then assume that
$S_2$ is the closure of an open set of an infinite-dimensional functional subspace.
This leads to an even more serious problem: closed bounded balls in $S_2$ are not compact under the $S_2$ topology. The lack of compactness implies that the gradient ascent paths are not guaranteed to converge with respect to the $S_2$ topology.
 \end{itemize}
 %


%

The pseudo-density functional
$p_h(x)=E_P\,K\left(\frac{\|X-x\|^2_{L^2}}{h}\right)$ with $h>0$ is therefore the most natural
candidate to develop a theory of modal clustering of smooth random
curves in a density-free setting.
Furthermore, the functional $p_h$
corresponds to the functional discussed by \cite{hall2002depicting}, who
proposed a mode-finding algorithm for functional data and had the intuition
that their algorithm was approximating a gradient flow on the estimator $\hat
p_h$. 

Of course, from a practical point of view, one has to choose a value for $h$.
It is well-known that, in finite-dimensional scenarios, the
behavior of the topological structure of $\hat p$ (the estimator of the
underlying density function) exhibits a phase-transition as $h$ varies: for
small values of $h$, the estimated density generates many irrelevant clusters while for
large values of $h$, $\hat p$ only generates few uninformative clusters which
eventually merge into a single one for $h$ large enough. Interestingly, one can usually empirically
identify a relatively broad range of intermediate
values of $h$ for which the number of clusters associated to $\hat p$ is
stable (see for instance, \citealp{genovese2013nonparametric}). Several criteria have been proposed in the
finite-dimensional literature to choose an optimal value of $h$ in
practice. These are usually based on two ideas: 1. using a cross-validation
criterion optimized for the estimation of $p$ and its derivatives
(\citealp{chacon2013data}) or 2. maximizing the number of statistically relevant modes of $\hat p$ as soon as
a statistical test for the relevance of the modes is available (\citealp{genovese2013nonparametric}). For functional data
clustering, we think that the latter approach is generally more appropriate and the algorithm that we propose in Section \ref{section:confidenceMarking} 
can be used this way to practically choose a value for $h$.


\section{Discussion and conclusions}
\label{section:conclusionAndDiscussion}
In this paper, we provide a general theoretical background for clustering of
functional data based on pseudo-densities. We show that clusters of functional
data can be characterized in terms of the basins of attraction of the critical
points of a pseudo-density functional, both at the population and at the sample level. 
Our theory
can be generalized to 
different
functional spaces, as long as the chosen pseudo-density functional is
sufficiently smooth and the range of the functional random variable $X$ can
be compactly embedded in a larger space to guarantee the compactness of the
gradient flow trajectories. 
Because of the need of a compact embedding, one has to consider
two non-equivalent topologies at the same time (in our case the $L^2$ and
the $H^1$ topologies): from a statistical
viewpoint, this means that the data need to be at least one order smoother than the space in which they are embedded.

Besides compactness, there is another element that makes the theory of population clustering in the
functional data setting more challenging when compared to the
finite-dimensional case. This is the fact that the basin of attraction of a
saddle point of $p_h$ is not necessarily negligible. While in the
finite-dimensional setting the basin of attraction of a saddle point of the
Morse density function $p$ is a manifold whose dimension is strictly smaller
than the dimension of the domain of $p$ (and therefore its probability content
is null), the same property is not necessarily satisfied by a pseudo-density
functional in the infinite-dimensional and density-free setting that we
consider. In analogy to the finite-dimensional case, one
would expect that clusters that are associated to the local modes of $p_h$ are
more relevant than those associated to the saddle points of the same
functional. It becomes natural to ask whether it is possible to derive a
statistical procedure that marks a local mode of $\hat
p_h$ (and its associated empirical cluster) as significant whenever it corresponds to a non-degenerate local mode of $p_h$. We provide a consistent
algorithm to achieve this task that can be applied to real data, such as noisy 
measurements of random curves on a grid.
Furthermore, although the sample pseudo-density is a functional with infinite-dimensional domain, 
the algorithm only requires to project the data onto 
a linear space that has finite dimension
for any sample of curves of finite size $n$.

The analysis and the results of our
paper raise new relevant questions that will be addressed in future work.
In particular, future work shall provide a systematic validation of our theory by means of 
applications on both simulated and real data, and an extension to multivariate functional data 
(i.e. curves in high-dimension). Such extension will further broaden the applicability of our results. 


\section*{Acknowledgements}
M. Ciollaro and D. Wang wish to thank Prof. Giovanni Leoni (Department of Mathematical Sciences, Carnegie Mellon University) and Xin Yang Lu (Department of Mathematics and Statistics, McGill University) for the very useful conversations
leading to several improvements in the early versions of this paper.

\newpage 

\appendix

\section{Proof of the results}
\label{section:appendix}

\begin{proof}[Proof of Proposition \ref{proposition:phIsMorse}]
For convenience, we prove the result under the additional assumption that $K$ is
compactly supported on $[0,1]$. The proof that we present can be easily extended to
exponentially decaying kernel functions and
this extra assumption can be safely removed.

Consider the 
set of assumptions of the Proposition. Furthermore, let $\bar K_2=\sup_{x \in \mathbb{R}^d}
\|\nabla^2 p(x)\|_{2}$ and $\underline K_1=\inf_{x \in \partial S_c} \|\nabla p(x)\|_{2}$. Note that since
$\partial S_c$ is compact, $\underline K_1>0$. Consider now the set $S_\epsilon=\{ x \in S_c
: d(x, \partial S_c) \geq \epsilon \}$. Then, there exists a set $\Omega$ such
that $S_{2\epsilon} \subset \Omega \subset S_\epsilon$ and $\partial \Omega$ is
also smooth. As a result, if $x \in \partial \Omega$, then $\epsilon \leq
d(x, \partial S) \leq 2\epsilon$ and $\inf_{x \in S \cap \Omega^c} \|\nabla p(x)\|_2
\geq \underline K_1 / 3$. Since $p$ is Morse on $\Omega$ and twice continuously
differentiable on $\operatorname{int}(S_c)$, then standard mollification results guarantee
that there exist $\eta>0$ and $h_1>0$ such that if $0< h \leq h_1$, then $\sup_{x \in
  \Omega} \| \nabla^{(i)} p_h(x) - \nabla^{(i)} p(x)\|_2 \leq \eta$ for $i=0,1,2$. Then, Lemma 16 of
\cite{chazal2014robust} guarantees that $p_h$ is Morse on $\Omega$. It is only
left to show that if $x \in \Omega^c$ is such that $\mathcal{L}(
B_{\mathbb{R}^d}(x,h) \cap S_c) > 0$ then $\nabla p_h \neq 0$. Consider $h
< \frac{\underline K_1}{6 \bar K_2}$ and let $n(\cdot)$ denote the outward
normal vector to $S_c$ with unitary norm. We have
\begin{equation}
\begin{aligned}
\nabla p_h(x) &= \nabla_x \int_{\mathbb{R}^d} K \left( \frac{\|y-x\|^2_{2}}{h}
\right)p(y)\,dy \\
&= \int_{S_c \cap B_{\mathbb{R}^d}(x,h)} \nabla_x K \left( \frac{\|y-x\|^2_{2}}{h}\right)p(y)\,dy \\
&= \int_{S_c \cap B_{\mathbb{R}^d}(x,h)} - \nabla_y K \left(
  \frac{\|y-x\|^2_{2}}{h}\right)p(y)\,dy \\
&=\int_{S_c \cap B_{\mathbb{R}^d}(x,h)} K \left( \frac{\|y-x\|^2_{2}}{h}\right)\nabla_y
p(y)\,dy \\
&-\int_{\partial S_c \cap B_{\mathbb{R}^d}(x,h)} K \left(
  \frac{\|y-x\|^2_{2}}{h}\right)n(y)p(y)\,dy \\
&- \int_{S_c \cap \partial B_{\mathbb{R}^d}(x,h)} K \left( \frac{\|y-x\|^2_{2}}{h}\right)
n(y)p(y)\,dy.
\end{aligned}
\end{equation}
Note that $p(y)=0$ if $y \in \partial S_c$ and, since $K$ is compactly supported
on $[0,1]$, $K \left( \frac{\|y-x\|^2_{L^2}}{h}\right)=0$ if $y \in
\partial B_{\mathbb{R}^d}(x,h)$. Hence, the last two integrals on the boundaries are null. Now,
since $\nabla p$ is $\bar K_2$-Lipschitz, we have
\begin{equation}
\begin{aligned}
\|\nabla p_h(x) \|_2 &= \left \| \int_{S_c \cap B_{\mathbb{R}^d}(x,h)} K \left( \frac{\|y-x\|^2_{2}}{h}\right)\nabla
p(y)\,dy \right \|_2 \\
& \geq -\left \| \int_{S_c \cap B_{\mathbb{R}^d}(x,h)} K \left( \frac{\|y-x\|^2_{2}}{h}\right)\nabla
p(y)\,dy \right. \\
&-\left. \int_{S_c \cap B_{\mathbb{R}^d}(x,h)} K \left( \frac{\|y-x\|^2_{2}}{h}\right)\nabla
p(x)\,dy \right \|_2 \\
&+\left\| \int_{S_c \cap B_{\mathbb{R}^d}(x,h)} K \left( \frac{\|y-x\|^2_{2}}{h}\right)\nabla
p(x)\,dy \right\|_2 \\
& \geq -\bar K_2 h \int_{S_c \cap B_{\mathbb{R}^d}(x,h)} K \left( \frac{\|y-x\|^2_{2}}{h}\right)
\, dy \\
& + \|\nabla
p(x)\|_2 \int_{S_c \cap B_{\mathbb{R}^d}(x,h)} K \left( \frac{\|y-x\|^2_{2}}{h}\right)\,dy \\
& \geq \left( \frac{\underline K_1}{3}-\bar K_2h\right) \int_{S_c \cap B_{\mathbb{R}^d}(x,h)} K \left( \frac{\|y-x\|^2_{2}}{h}\right)
\,dy > 0
\end{aligned}
\end{equation}
since $-\bar K_2 h > - \underline K_1/6$ and $\mathcal L(B_{\mathbb R^d} (x,h) \cap S_c) >0$. This shows that $p_h$ has no
non-trivial critical points outside of $\Omega$.

\end{proof}

\begin{proof}[Proof of Lemma \ref{lemma:derivativesOfp}]
%
\begin{equation}
\label{eq:TaylorExpansionOfk}
\begin{aligned}
&|K_h(\|X-(x+\delta)\|^2_{L^2}) | -K_h(\|X-x\|^2_{L^2}) \\
&- \langle D K_h(\|X-x\|^2_{L^2}),\delta \rangle_{L^2}| \\
&\leq \sup_{s\in [0,1]}\frac{1}{2}|D^2K_h(\|X-(x+s\delta)\|^2_{L^2}) (\delta,
\delta ) | 
\end{aligned}
\end{equation}
Now, using the bounds on the derivatives of $K_h$ and equation \eqref{eq:secondDerivativeOfk}, we have
\begin{equation}
\begin{aligned}
&|D^2K_h(\|X-(x+s\delta)\|^2_{L^2}) (\delta, \delta ) | \\
&\leq 4\left| K_h''(\|X-(x+s\delta) \|^2_{L^2})\right| \|X-(x+s\delta)\|_{L^2}^2 \|\delta\|^2_{L^2}\\
&+2\left| K_h'(\|X-(x+s\delta)\|^2_{L^2})\right| \|\delta\|^2_{L^2}\\
&\leq 4 K_2 \|\delta\|^2_{L^2}  = o(\|\delta\|_{L^2}).
\end{aligned}
\end{equation}
Taking the expectation and applying Jensen's inequality in equation
\eqref{eq:TaylorExpansionOfk} yields
\begin{equation}
\begin{aligned}
&|E_P\, K_h(\|X-(x+\delta)\|^2_{L^2}) | - E_P \, K_h(\|X-x\|^2_{L^2}) \\
& - E_P\, \langle DK_h(\|X-x\|^2_{L^2}),\delta \rangle_{L^2}|= o(\|\delta\|_{L^2})
\end{aligned}
\end{equation}
which implies that
\begin{equation}
\begin{aligned}
\langle Dp_h(x),  \cdot \, \rangle_{L^2} &= E_P \langle D K_h(\|X-x\|^2_{L^2},
\cdot \ \rangle_{L^2} \\
& = E_P \langle 2 K_h'\left( \|X-x\|^2_{L^2} \right) (x-X) , \cdot \,
\rangle_{L^2} \\
& = \langle E_P  \, 2 K_h'\left( \|X-x\|^2_{L^2} \right) (x-X) , \cdot \,
\rangle_{L^2}.
\end{aligned}
\end{equation}
Thus, by definition, equation \eqref{eq:derivativeOfp} is established. It is
clear from assumption (H1) that $\|D p_h(x)\|_{L^2} \le 2K_1$.
In order to derive $D^2 p_h(x)$, a similar computation is used. The Taylor expansion
of $F(x) = K_h'( \|X-x\|^2_{L^2} )(x-X)$ as a function of $x$ gives
\begin{equation}
\begin{aligned}
\left \| F(x+\delta )- F(x)- D F(x) (\delta) \right\|_{L^2}  
\leq \sup_{s\in [0,1]}\frac{1}{2} \| D^2 F(x +s\delta ) (\delta, \delta)\|_{L^2}
\end{aligned}
\end{equation}
where
\begin{equation}
\begin{aligned}
 D F(x) (\delta) &= 2K_h''\left( \|X-x\|^2_{L_2}
  \right) \langle  x-X,\delta \rangle_{L_2} (x-X)  \\
  & +K_h'\left( \|X-x\|^2_{L_2}  \right) \delta.
  \end{aligned}
\end{equation}
Furthermore,
\begin{equation}  
\begin{aligned}
&D^2 F(x +s\delta ) (\delta_1, \delta_2) \\
&= 4K_h'''\left( \|X-x\|^2_{L_2}
  \right) \langle  x-X,\delta_1 \rangle_{L_2}\langle  x-X,\delta_2 \rangle_{L_2} (x-X)  \\
  &+2K_h''\left( \|X-x\|^2_{L_2}
  \right) \langle  \delta_1 ,\delta_2 \rangle_{L_2} (x-X) \\ 
   &+ 2K_h''\left( \|X-x\|^2_{L_2}
  \right) \langle  x-X,\delta_1 \rangle_{L_2} \delta_2  \\
  &+2 K_h''\left( \|X-x\|^2_{L_2}  \right)\langle x-X ,\delta_2 \rangle_{L_2} \delta_1
  \end{aligned}
\end{equation}
By assumption (H1), $\sup_{s\in [0,1]} \| D^2 F(x +s\delta ) (\delta, \delta)\|_{L^2} \leq 6K_3 \|\delta \|^2_{L^2}$ .
Thus, 
\begin{equation}
\begin{aligned}
&\|  E_P F(x+\delta )-  E_P F(x)-  E_P \, D F(x) (\delta)  \|_{L^2}  \\
&\leq E_P\, \|  F(x+\delta )-   F(x)-  D F(x) (\delta) \|_{L^2}  \leq  3K_3
\|\delta\|^2_{L^2},
\end{aligned}
\end{equation}
and the claim then easily follows.
\end{proof}
\begin{proof}[Proof of Proposition \ref{proposition:gradientIsInH1}]
\begin{equation}
\begin{aligned}
& \langle D p_h(x), v' \rangle_{L^2}=\left \langle E_P \, 2K'_h\left( \|X-x\|^2_{L^2}\right)
  (x-X), v' \right \rangle_{L^2} \\
& = E_P\,2K_h'\left( \|X-x\|^2_{L^2}\right)
  \langle x-X,v' \rangle_{L^2} \\
& = E_P \, -2K_h'\left( \|X-x\|^2_{L^2}\right) \langle x'-X',v
\rangle_{L^2} \\
& \leq E_P \, -2K_h'\left( \|X-x\|^2_{L^2}\right)
\|x'-X'\|_{L^2} \|v\|_{L^2} \\
& \leq 2 \, K_2(\|x'\|_{L^2}+E_P\|X'\|_{L^2})\|v\|_{L^2}  \\
& \leq 2 K_2(\|x'\|_{L^2}+N_1)\|v\|_{L^2} 
\end{aligned}
\end{equation}
where the second equality holds by integration by parts. An application
of Lemma \ref{lemma:auxiliaryToL2GradientInH1} with $L(v)=E_P \, -2K'\left( \|X-x\|^2_{L^2}\right) \langle x'-X',v
\rangle_{L^2}$ yields $\|D
p_h(x)\|_{H^1_0}=\|D p_h(x)'\|_{L^2} \leq
2K_1(\|x'\|_{L^2}+N_1)$ and therefore $D
p_h(x) \in H^1_0$.
\end{proof}
\begin{proof}[Proof of Lemma \ref{lemma:gradientIsLocallyLipschitz}]
Let $\|x\|_{H^1_0} , \|y\|_{H^1_0} \leq L < \infty$. It suffices to show
that $\exists 0<C(L)<\infty$ such that $\|D p_h(x) -D p_h(y)\|_{H^1_0} \leq
C(L)\| x-y \|_{H^1_0}$. Equivalently, one has to show
that $\|D p_h(x)' - D p_h(y)'\|_{L^2} \leq
C(L)\| x'-y' \|_{L^2}$. By Lemma
\ref{lemma:auxiliaryToL2GradientInH1} and
Proposition \ref{proposition:gradientIsInH1} we have that, for any $v \in L^2$,
\begin{equation}
\label{eq:main}
\begin{aligned}
& \langle D p_h(x)'-D p_h(y)',v \rangle_{L^2}  \\
&= 2E_P \, K_h'\left( \|X-x\|^2_{L^2}\right)
\langle x'-X',v \rangle_{L^2} \\ 
&- K_h'\left(
  \|X-y\|^2_{L^2}\right) \langle y'-X',v \rangle_{L^2} \\
&= 2  E_P \, \left[ K_h'\left( \|X-x\|^2_{L^2} \right)
      - K_h'\left( \|X-y\|^2_{L^2}\right) \right] \langle
    x'-X',v \rangle_{L^2}\\
 &+2E_P\,  K_h'\left( \|X-y\|^2_{L^2}\right)
\langle x'-y',v \rangle_{L^2}
\end{aligned}
\end{equation}
Since $\frac{d}{dt}K'_h(t^2) = 2K''_h(t^2) t \leq 2K_3$ by assumption (H1), $K'_h(t^2)$ is Lipschitz with
Lipschitz constant not larger that $2K_3$. Therefore,
\begin{equation}
\begin{aligned}
 &\left| K_h'\left( \|X-y\|^2_{L^2} \right)
      - K_h'\left( \|X-x\|^2_{L^2}\right) \right| \\ 
&\leq 2K_2 \big| \|X-y\|_{L^2} - \|X-x\|_{L^2}  \big| \\
      &\leq 2K_2 \|x-y\|_{L^2} \leq 
2K_2 C_p\|x' -y'\|_{L^2}.
\end{aligned}
\end{equation}
%
%
We have
\begin{equation}
\label{eq:part1}
\begin{aligned}
& E_P \,\left[ K_h'\left( \|X-y\|^2_{L^2} \right)
      - K_h'\left( \|X-x\|^2_{L^2}\right) \langle
    x'-X',v \rangle_{L^2} \right] \\
&\leq 2K_2C_p \| x' -y'\|_{L^2} \left(\|x'\|_{L^2} + E_P\,\|X'\|_{L^2} \right) \|v\|_{L^2} \\
& \leq 2K_2C_p  \| x' -y'\|_{L^2}  (L+ N_1) \|v\|_{L^2}
\end{aligned}
\end{equation}
and
\begin{equation}
\label{eq:part2}
E_P\, K'_h\left( \|X-y\|^2_{L^2}\right)
\langle x'-y',v \rangle_{L^2} \leq K_2\|x'-y'\|_{L^2}\|v\|_{L^2}.
\end{equation}
By putting together equations \eqref{eq:part1} and \eqref{eq:part2} we then have
the following bound for equation \eqref{eq:main}:
\begin{equation}
\langle D p_h(x)'-D p_h(y)',v \rangle_{L^2} \leq
C(L) \|x'-y'\|_{L^2} \|v\|_{L^2},
\end{equation}
with $C(L)=2\left[K_3(L+ N_1) + K_2\right]$, which obviously implies $\|D
p_h(x)'-D p_h(y)'\|_{L^2} \leq C(L) \|x'-y'\|_{L^2}$.
\end{proof}
\begin{proof}[Proof of Lemma \ref{lemma:flowPointsTowardsBall}]
Lemma \ref{lemma:auxiliaryToL2GradientInH1} and Proposition
\ref{proposition:gradientIsInH1} allow us to write
\begin{equation}
\label{eq:boundedH1}
\begin{aligned}
&\langle D p_h(\pi_x(t)),\pi_x(t) \rangle_{H^1_0} = \langle D p_h(\pi_x(t))',\pi_x(t)'
\rangle_{L^2} \\
&= 2E_P \, K_h'\left( \|X-\pi_x(t)\|^2_{L^2}\right) \langle \pi_x(t)'-X',\pi_x(t)'
\rangle_{L^2} \\
& =2E_P \, K_h'\left( \|X-\pi_x(t)\|^2_{L^2}\right) \left(\|\pi_x(t)'\|^2_{L^2}-\langle X',\pi_x(t)'
\rangle_{L^2} \right)  \\
&\leq 2E_P \, K_h'\left( \|X-\pi_x (t)\|^2_{L^2}\right)
\|\pi_x (t)'\|^2_{L^2} \\
&- 2E_P \, K_h'\left( \|X-\pi_x (t)\|^2_{L^2}\right)\| X'\|_{L^2}\|\pi_x (t)'\|_{L^2},\\
\end{aligned}
\end{equation}
where the last inequality follows because (H2) guarantees that $K'_h(t^2) \leq
 0$. \\
For the first claim, assumption (H2) and $p_h(\pi_x(t)) \geq p_h(\pi_x(0)) \geq\delta$
imply
\begin{equation}
\begin{aligned}
&E_P \, K_h'\left( \|X-\pi_x(t)\|^2_{L^2}\right) \leq - E_P \, K_h\left(
  \|X-\pi_x(t)\|^2_{L^2}\right) \\
&=-p_h(\pi_x(t)) \leq -\delta.
\end{aligned}
\end{equation}
Thus, if $\|\pi_x (t)\|_{H^1_0}  \geq K_2N_1 /\delta$,
\begin{equation}
\langle D p_h(\pi_x(t)),\pi(t) \rangle_{H^1_0} \leq -2\delta \|\pi_x(t)\|^2_{H^1_0}+2K_2N_1 \|\pi_x(t)\|_{H^1_0} \leq 0.
\end{equation}
For the second part, equation \eqref{eq:boundedH1} gives
\begin{equation}
\begin{aligned}
&\langle D p_h(\pi(t)),\pi(t) \rangle_{H^1_0} \\
&\leq 2E_P \, K_h'\left( \|X-\pi_x(t)\|^2_{L^2}\right)\|\pi_x(t)'\|^2_{L^2} \\
&-2E_P \, K_h'\left( \|X-\pi_x(t)\|^2_{L^2}\right)\|
X'\|_{L^2}\|\pi_x(t)'\|_{L^2}\\
& \leq 2E_P \, K_h'\left( \|X-\pi_x(t)\|^2_{L^2}\right)\} \left( \| \pi_x(t) ' \|^2_{L^2} - M \|\pi_x(t)'\|_{L^2} \right).
\end{aligned}
\end{equation}
Thus, $\langle D p_h(\pi_x(t)),\pi_x(t) \rangle_{H^1_0} \leq 0$ as soon as $\|\pi_x(t)'\|_{L^2}=\|\pi_x(t)\|_{H^1_0}>M $.
\end{proof}
\begin{proof}[Proof of Proposition \ref{proposition:existenceuniqueness}]
Proposition \ref{proposition:gradientIsInH1} and Lemma \ref{lemma:gradientIsLocallyLipschitz}
 guarantee the existence and uniqueness of a local solution under the $H^1_0$ norm from the standard
 theory of ordinary differential equations. Some extra work is needed to extend
 the local solution to a global one. We provide a complete proof in three steps which
 builds on Theorem 3.10 of \cite{hunter2001applied} (their Theorem 3.10 holds more
 generally on Banach spaces, see for instance \citealp{schechter2004introduction}) and the authors' subsequent remark concerning the
 extension of the local solution to a global one.
 \begin{itemize}
\item[Step 1.] In this step, we show that if the solution $\pi_x(t)$ exists for
  any time interval $[0, T]$, then
 there exists $C_1>0$ such that $\|\pi_x(t)\|_{H^1_0} \leq C_1$.\\
If $p_h(x) =0$, then $x$ is a trivial local minimum of $p_h(x)$. As a result,
$Dp_h(\pi_x(0)) =0$ and $\pi_x(t) = \pi_x(0)$ for all $t$. Thus, in this case it
suffices to take $C_1 =R$. Suppose instead that $p_h(x) =\delta >0$. Consider
$g(t)=\|\pi_x(t)\|^2_{H^1_0}$. Clearly, $\frac{d}{dt}g(t)=2\left\langle
\pi_x(t),\frac{d}{dt}\pi_x(t) \right\rangle_{H^1_0} = 2\left\langle
\pi_x(t), D p_h(\pi_x(t)) \right\rangle_{H^1_0}$. Note that $g(0) \leq R^2$. 
Take $C_1= \max\{ R, K_2N_1/\delta\}$. Fix an
arbitrary $\epsilon>0$ and suppose that there exists $T'$ such that $0 \leq T' \leq T$ and
$g(T') \geq C_1^2+\epsilon$. Then, there must exist $0 \leq t^* \leq T'$
such that 
\begin{equation}
g'(t^*)=2\left\langle \pi_x(t^*),D p_h(\pi_x(t^*)) \right\rangle_{H^1_0} > 0
\end{equation}
and $g(t^*) \in (C_1^2,C_1^2+\epsilon)$. This is a contradiction because, by Lemma \ref{lemma:flowPointsTowardsBall}, 
if  $\|\pi_x(t^*)\|_{H^1_0} >K_2 N_1 /\delta $ then $\left\langle \pi_x(t^*),D
  p_h(\pi_x(t^*)) \right\rangle_{H^1_0} \leq 0$.
\item[Step 2.] Let $\pi_x :[0,T_1] \to H^1_0$ be the local solution of the
  ordinary differential equation $\pi'_x(t)=D p_h(\pi_x(t)) $ with $\pi_x(0)=x$.
Suppose that
$\|\pi_x(t)\|_{H^1_0} \leq C_1$ if $t \leq T_1$ . Given $C_2 >C_1$, we show that there exists $T_2 >0$ such that
 the solution can be uniquely extended to $\pi_x : [0, T_1+T_2] \to H^1_0$ with
 $\|\pi_x(t)\|_{H^1_0} \leq C_2$ if $t \leq T_1+T_2$. 
To see this, consider the ordinary differential equation $\frac{d}{dt} \phi(t) = Dp_h(\phi(t))$ with $\phi(0) = \pi_x(T_1)$. Note that
$\|\phi(0)\|_{H^1_0} = \|\pi_x(T_1)\|_{H^1_0} \leq C_1 <C_2$ by assumption. Also,
let $N>0$ be such that
\begin{equation}
\sup_{x\in B_{H^1_0}(0, M_2)} \|D p_h(x)\|_{H^1_0} \leq N.
\end{equation}
Now, by the Picard-Lindel\"of theorem on Banach spaces, if one takes $T_2= (C_2 -C_1)/ N$ then 
the solution $\phi$ exists on $[0, T_2]$ and $\phi(t) \in B_{H^1_0} (\pi_x(T_1), C_2 -C_1)$. 
Consider the extension $\pi_x(t)$ given by
\begin{equation}
\pi_x(t)=
\begin{cases}
  \pi_x(t)  & \text{ if }  t \leq T_1 \\
  \phi(t-T_1) & \text{ if } T_1 \leq t \leq T_1+T_2. 
\end{cases}
\end{equation}
The newly defined $\pi_x$ is well-defined and  continuous. Since 
\begin{equation}
\frac{d}{dt} \pi_x(t) = \frac{d}{dt} \phi(t-T_1) =Dp_h(\phi(t-T_1)) =D p_h(\pi_x(t))
\end{equation}
if $t\in [T_1,T_1+T_2]$, the new $\pi_x$ is an extension of the
solution. Furthermore, clearly $\pi_x(t) \in B_{H^1_0} (0, C_2)$ for $ t\in [0,
T_1+T_2]$. The uniqueness of the extended solution follows 
from the fact that $Dp_h$ is Lipschitz on $B_{H^1_0}(0, C_2)$.
 \item[Step 3.] Since $\|x\|_{H^1_0} \leq R$, by Picard's theorem there exists a local solution  
 $\pi_x(t) : [0, T_1] \to H^0_1$ and $\| \pi_x(t)\|_{H^1_0} \leq C_1$. Step 2 guarantees that 
 the solution can be uniquely extended to $[0, T_1+T_2] $.  Step 1 then implies that 
 such extended solution $\pi_x$ satisfies $\| \pi_x(t) \|_{H^1_0} \leq C_1$ for all $t\in [0, T_1+T_2]$. By Step 2 again,
 the extended solution $\pi_x$ can be extended again to the larger time interval $[0, T_1+2T_2]$ and, once again, by
 Step 1 the extended solution is entirely contained in the $H^1_0$ ball of radius
 $C_1$. By iterating this procedure, one sees that the unique solution $\pi_x$
 can be extended to all of $\mathbb R_+$ and $\|\pi_x(t) \|_{H^1_0} \leq C_1$ for
 all $t \geq 0$.
\end{itemize}
\end{proof}
\begin{proof}[Proof of Theorem \ref{theorem:solutionStaysInClosedBall}]
%
Since $p_h$ is a bounded functional and
both $Dp_h$ and $D^2p_h$ are bounded operators on $L^2$, 
it is clear that
\begin{equation}
\lim_{t\to \infty} \|Dp_h(\pi_x(t))\|_{L^2} = 0
\end{equation}
(see Lemma 7.4.4 in \citealp{jost2011riemannian}). Furthermore, since $\|\pi_x(t)\|_{H^1_0} \le C_1$ for all $t \geq 0$ and 
closed $H^1_0$ balls are compact with respect to the $L^2$ norm, there exist $\{\pi(t_k)\}^{\infty}_{k=1}$ such that 
 $\lim_{k \to \infty}\|\pi_x(t_k) - \pi_x(\infty)\|_{L^2} \to 0$ for some
 $\pi_x(\infty) \in L^2$. By the continuity of 
 $Dp_h :L^2 \to L^2$, one also has that $Dp_h(\pi_x(\infty))=0$.

Recall that by assumption (H4), all the non-trivial critical points of $p_h$ are isolated. Hence, for any
non-trivial 
critical point of $p_h$, one can find a $L^2$ neighborhood
around it in which there are no other critical points of $p_h$.
Let $\delta_1>0$ be the radius of such neighborhood around $\pi_x(\infty)$. Suppose now that the sequence $\{\pi_x(t)\}_{t \geq 0}$ does
not converge to $\pi_x(\infty)$ in the $L^2$ sense. Then, there exists
$\delta_2>0$ and a subsequence $\{\pi_x(s_k)\}_{k \geq 1}$ such that
$\|\pi_x(\infty)-\pi_x(s_k)\|_{L^2} \geq \delta_2$ for all $k \geq
1$. Without loss of generality, one can assume that $\|\pi_x(t_k) - \pi_x
(\infty)\|_{L^2} \leq \delta_1 / 3 $ and that $t_k < s_k < t_{k+1} $ for all $k$.
  But then, by the continuity of the path $\pi_x$, there exists
$r_k$ such that $ t_k \leq  r_k \le s_k$ and 
$\|\pi_x(\infty)-\pi_x(r_k)\|_{L^2} =  \min\{\delta_1,\delta_2\}/2$
  for all $k \geq 1$. Since $ \|\pi_x(r_k)\|_{H^1_0} \leq C_1 $, $\{\pi_x(r_k)
  \}_{k \geq 1}$ also has a subsequence which converges with respect to the $L^2$ norm as well. 
  Without loss of generality assume that 
   $\pi_x(r_k) \to\tilde \pi_x(\infty)$ in $L^2$ sense.
  By the continuity of $D p_h(x)$,  $\tilde \pi_x(\infty)$ is also a
  critical point of $p_h$. But then,
  $\|\pi_x(\infty)-\tilde \pi_x(\infty)\|_{L^2} = \min\{\delta_1,\delta_2\}/2
  < \delta_1$, which is a contradiction. This establishes the uniqueness of
  $\pi_x(\infty)$ and concludes the proof.
\end{proof}
\begin{proof}[Proof of Lemma \ref{lemma:criticalPointsInH1}]
By assumption, $Dp_h(x) = 2E_P\, K_h'(\|X-x\|^2_{L^2})(x-X) =0$ and $E_P \, K_h'(\|X-x\|^2_{L^2})\leq - E_P \, K_h(\|X-x\|^2_{L^2}) =-p_h(x)<0$.
Thus, 
\begin{equation}\label{criticalpoints}
  x= \frac{ E_P \, K_h'(\|X-x\|^2_{L^2})X}{ E_P \, K_h'(\|X-x\|^2_{L^2})}.
\end{equation}
Note that, by assumption (H2), $E_P \, K_h'(\|X-x\|^2_{L^2}) \leq -E_P \, K_h(\|X-x\|^2_{L^2}) < 0$. Therefore, it suffices to show that
$ E_P \, K_h'(\|X-x\|^2_{L^2})X \, \in H^1_0$. We have
\begin{equation}
\begin{aligned}
&\langle E_P \, K_h'(\|X-x\|^2_{L^2})X ,v'\rangle_{L^2}
=E_P \, K_h'(\|X-x\|^2_{L^2})\langle X ,v'\rangle_{L^2} \\
&=E_P \, K_h'(\|X-x\|^2_{L^2})\langle -X ',v\rangle_{L^2} \leq K_2 N_1 \|v\|_{L^2}.
\end{aligned}
\end{equation}
Thus, $E_P \, K_h'(\|X-x\|^2_{L^2})X  \in H^1_0$ by Lemma
\ref{lemma:auxiliaryToL2GradientInH1}. For the second claim of the Lemma, suppose that $\|X\|_{H^1_0} \leq M$ $P$-almost
surely. Then, any $x$ which is a non-trivial critical point of $p(x)$ satisfies equation \eqref{criticalpoints}.
As a result, for any $v\in C^{\infty}_{c}([0,1])$,
\begin{equation}
\begin{aligned}	
&\langle x ,v' \rangle_{L^2} =\frac{E_P K'_h(\|X-x\|^2_{L^2}) \langle X,v'\rangle_{L^2}}{E_P K'_h(\|X-x\|^2_{L^2})} \\
& =\frac{E_P\, K'_h(\|X-x\|^2_{L^2}) \langle -X',v\rangle_{L^2}}{E_P\, K'_h(\|X-x\|^2_{L^2})}\\
&\leq \frac{E_P\, K'_h(\|X-x\|^2_{L^2}) \| X\|_{H^1_0}\|v\|_{L^2}}{E_P \, K'_h(\|X-x\|^2_{L^2})}\\
&\leq M\|v\|_{L^2}.
\end{aligned}
\end{equation}
By Lemma \ref{lemma:auxiliaryToL2GradientInH1}, it follows that $\|x\|_{H^1_0} \leq M$
and the proof is complete.

\end{proof}
\begin{proof}[Proof of Lemma \ref{lemma:Sexample}]
In light of Proposition \ref{proposition:gradientIsInH1}, for the first claim it suffices to show that if $x \in S$, then
$Dp_h(x) \in S$. Note that $S$ is a closed subspace of $L^2$. As a result, there
exists another subspace $S^\perp \subset L^2$ which is the
orthogonal complement of $S$. Let $g\in S^\perp$, so that $\langle X,g\rangle_{L^2}=0$ almost surely.
Then,
\begin{equation}
 \langle Dp_h(x) , g\rangle_{L^2} = 2E_P \, K'_h (\|X-x\|^2_{L^2})\langle
 x-X,g\rangle_{L^2} =0,
\end{equation}
and thus $Dp_h(x) \in S$. The second claim is established in a similar way as in
Lemma \ref{lemma:criticalPointsInH1}.
\end{proof}
\begin{proof}[Proof of Lemma \ref{lemma:nonDegeneracyOfCriticalPoints}]
  By Lemma \ref{lemma:Sexample}, if $x^*$ is a non-trivial critical point then
   $x^* \in S$.
If one views $D^2 p_h(x^*)$ as a linear operator from $L^2$ to $L^2$, it is
sufficient to show that $D^2 p_h(x^*)$ is an isomorphism (i.e. a continuous map from $L^2$
to $L^2$ such that its inverse is also continuous). Note first that for any $v\in L^2$
\begin{equation}
\begin{aligned}
D^2p_h(x^*) (v)&= E_P \, \left[ 4 K_h'' (\|X-x^*\|^2_{L^2})\langle x^*-X ,v
  \rangle_{L^2} ( x^*-X ) \right. \\
&\left. + 2K'_h(\|X-x^*\|^2_{L^2})  v \right]. 
\end{aligned}
 \end{equation}
 Observe that
\begin{itemize}
\item[1.] If $v \in S$, then $D^2p_h(x^*)(v) \in S$. One can use a similar
  computation as in equation \eqref{derivativeequivalent} to show
that $D^2 p_h(x^*) (v) =D^2 \tilde{p_h} (\tilde x^*)(\tilde v)$, where $\tilde v$ is the vector in $R^d$ corresponding to $v$. 
\item[2.] Suppose $v \in S^\perp $. Since  $\langle x^* -X,v\rangle_{L^2}=0$ a.s.,
$D^2p_h(x^*) (v)\in S^\perp$. More specifically,
\begin{equation}
\label{eq:secondDerivativeOnOrtogonalComplement}
D^2p_h(x^*) (v)= 2 E_P \, K'_h( \|X-x^*\|^2_{L^2}) v .
\end{equation}
\end{itemize}
Thus, $S$ and $S^\perp$ are invariant subspaces of $D^2 p_h(x^*)$. In order 
 to see that $D^2p_h(x^*)$ is indeed an isomorphism, it is therefore enough to show
 that it is isomorphism on both $S$ and $S^\perp$ separately.
Under assumption (H4'), $p$ is a Morse density on $S_c$ and there exists $h>0$ small
enough so that $p_h$ is also a Morse function on the interior of $S_c$ (see
Remark \ref{remark:commentsOnMorseCondition}). Then, $x^*$ is in $S_c$ by
Proposition \ref{proposition:phIsMorse} and since $D^2p_h(x^*)$ is equivalent to $\nabla^2 \tilde p_h(\tilde x^*)$ (the
 Hessian of $\tilde p_h$ at $\tilde x^*$), for $h$ small enough $D^2p_h(x^*)$ is an isomorphism
 on $S$.
 Since $x^*$ is a non-trivial critical point of $p_h$, $p_h(x^*) =\delta >0$. By
 (H2), $E_P \, K_h'(\| X-x^*\|_{L^2})) \leq -\delta <0$. According to
 equation \eqref{eq:secondDerivativeOnOrtogonalComplement}, $D^2 p_h(x^*)$ 
 acts on $S^\perp$ by multiplying every vector in  $S^\perp$ by $2E_P \, K_h'(\|
 X-x^*\|_{L^2})$ and hence $D^2 p_h(x^*)$ is clearly an isomorphism on $S^\perp$.
 \end{proof}

\begin{proof}[Proof of Lemma \ref{lemma:equivalentnorms}]
Denote $T= - D^2 f(x^*)$ for simplicity. Then, $T$ is a positive definite isomorphism on $L^2$. Thus, there exists 
$C>0$ such that $\|T^{-1} \| \leq C$ where $\| \cdot \|$ here denotes the
operator norm. Also, it is straightforward to check that
 $T$ induces a well-defined inner product $\langle \cdot ,\cdot \rangle_T $ on
 $L^2$ by $\langle v ,w\rangle_T = \langle T v,w\rangle_{L^2}$. Now, for any $v
 \in L^2$ we have
\begin{equation}
\begin{aligned}
\|v\|_{L^2}^2 &= \langle v,v\rangle_{L^2} = \langle  T(T^{-1}v),v\rangle_{L^2} =  \langle  T^{-1}v,Tv\rangle_{L^2} = \langle T^{-1} v , v\rangle_T \\
& \le \|T^{-1} v \|_{T} \|v\|_{T}  = \|v\|_{T} \sqrt{\langle T^{-1} v, T^{-1}v \rangle_{T} } \\
&= \|v\|_{T} \sqrt{\langle T(T^{-1} v), T^{-1}v \rangle_{L^2} } \le \|v\|_{T}  \sqrt {\|T^{-1} v\|_{L^2} \|v\|_{L^2} }\\
&= \|v\|_{T} \sqrt{C} \|v\|_{L^2}.
\end{aligned}
\end{equation}
This implies that $\|v\|_{L^2}^2 \le C \|v\|^2_{T}$, and thus
\begin{equation}
\begin{aligned}
\sup_{\|v\|_{L^2} =1}Df^2(x^*) (v,v) &=\sup_{\|v\|_{L^2}
  =1}-T(v,v) \\ 
&=\sup_{\|v\|_{L^2} =1} -\| v\|^2_{T} \le -1/C.
\end{aligned}
\end{equation}
Therefore, by taking $\delta =1/C$ the claim of the Lemma follows.
\end{proof}
\begin{proof}[Proof of Lemma \ref{lemma:maxareclose}]
Let $\delta =\delta (x^*_2)$. The proof is in three steps.
\begin{itemize}
\item[Step 1.] Suppose that $\eta_1 \le \delta^2/8\beta_3$. Then, if $\epsilon=\delta/2\beta_3$, 
the solution of the initial value problem $\pi'_1(t) = Df_1 (\pi _1(t))$, with
$\pi_1(0) =x^*_2$  is contained in $B_{L^2} (x^*_2 ,\epsilon)$. In fact, suppose that the trajectory $\pi_1$ is not contained in $B_{L^2} (x^*_2 ,\epsilon)$. Since $\pi_1(0) =x^*_2$,
there must exist $t_0 >0$ such $\|\pi_1(t_0) -x^*_2\|=\epsilon$. Denote
$\pi(t_0) =x_0$. Then since $Df_2(x_2^*)=0$, a Taylor expansion implies that
\begin{equation}
\begin{aligned}
& f_1(x_0) \le f_1(x^*_2) +\langle Df_1 (x^*_2) ,x_0-x^*_2\rangle_{L^2} \\
&+ \frac{1}{2} D^2f_1(x^*_2) (x_0-x^*_2, x_0-x^*_2) \\
 & +\frac{1}{6} \beta_3 \|x_0-x^*_2\|_{L^2}^3\\
 & \le f_1(x^*_2) + \langle Df_2 (x^*_2) ,x_0-x^*_2\rangle_{L^2}   \\
& +\|Df_1 (x^*_2) -Df_2(x^*_2) \|_{L^2}  \|x_0 -x^*_2\|_{L^2}  \\
 & + \frac{1}{2} D^2f_2(x^*_2) (x_0-x^*_2, x_0-x^*_2) \\
& +\frac{1}{2} \|D^2f_1(x^*_2)  -D^2f_2(x^*_2) \|_{L^2} \| x_0 -x^*_2\| ^2_{L^2}\\
 &+\frac{1}{6} \beta_3 \|x_0-x^*_2\|_{L^2}^3\\
 &\le f_1(x^*_2) + \eta_1 \epsilon -\frac{1}{2}\delta\epsilon^2 +\frac{1}{2}\eta_2 \epsilon^2 +\frac{1}{6} \beta_3\epsilon^3\\
 & \le f_1(x^*_2) + \frac{1}{4} \delta \epsilon^2 -\frac{1}{2}\delta\epsilon^2 +\frac{1}{16} \delta \epsilon^2 +\frac{1}{12} \delta\epsilon^2\\
 &< f_1(x^*_2),
 \end{aligned}
\end{equation}
which is a contradiction because $f_1(\pi_1(t))$ is an non-decreasing function of $t$.
\item[Step 2.] By condition (C2), $\pi_1$ admits a convergent subsequence in $L^2$. Thus there is a subsequence $\{t_k\}_{k=1}^{\infty}$ and a critical point $x^*_1$   
such that $\|\pi_1(t_k) - x^*_1\|_{L^2} \to 0$ as $k \to \infty$ and $x^*_1 \in  B_{L^2}(x^*_2,\epsilon)$. In order to show that 
$x^*_1$ is a non-degenerate  local maximum in $B_{L^2}(x^*_2,\epsilon)$, consider $\eta_2 \le\delta/8$. 
Given any $\|u\|_{L^2} =1$, for any $x \in B_{L^2}(x^*_2,\epsilon)$ one has
\begin{equation}
\label{eq:testlocalmax}
\begin{aligned}
& D^2f_1(x) (u, u) \\
& \le D^2f_1(x^*_2) (u,u)  + |D^2f_1(x^*_2) (u,u)  -D^2f_1(x) (u,u) | \\
& \le D^2f_2(x^*_2) (u,u)  + |D^2f_2(x^*_2) (u,u) -D^2f_1(x^*_2) (u,u) | \\
& +\beta_3\|x^*_2 -x\|_{L^2}\\
& \le -\delta +\eta_2 + \beta_3\epsilon  =-\frac{3}{8}\delta.
\end{aligned}
\end{equation}
Therefore $\sup_{\|u\|_{L^2}=1} D^2f_1(x) (u,u) \le -3\delta /8$ and $Df_1(x^*_1)$ is negative definite. If one views $-Df_1(x^*_1)$ as a linear operator from $L^2$ to $L^2$, then
by the Lax-Milgram theorem, it is an isomorphism and hence $x_1^*$ is a
non-degenerate local maximum. Moreover, $x^*_1$ is the unique maximum in
$B_{L^2}(x^*_2,\epsilon)$: suppose that $y^*_1$ is another local maximum of $f_1$ in $B_{L^2}(x^*_2,\epsilon)$;
then, $Df_1(x^*_1)= 0$ and $Df_1(y^*_1)=0$, and by equation
\eqref{eq:testlocalmax},  $\sup_{\|u\|_{L^2} =1}D^2f_1(y^*_1)( u, u) \le
-3\delta /8$. A Taylor expansion shows that 
\begin{equation}
\begin{aligned}
 f_1(x^*_1) &\le f_1(y^*_1) + \frac{1}{2} D^2f_1(y^*_1) (x^*_1-y^*_1,
 x^*_1-y^*_1) \\
&+\frac{1}{6} \beta_3 \|x^*_1-y^*_1\|^3\\
 & \le f_1(y^*_1) - \frac{3}{16} \delta \|x_1^*-y^*_1\|^2 +\frac{1}{6} \epsilon\beta_3 \|x^*_1-y^*_1\|^2 \\
  &\le f_1(y^*_1) - \frac{5}{48} \delta \|x_1^*-y^*_1\|^2
 \end{aligned}
\end{equation}
and by symmetry, $f_1(y^*_1) \le f_1(x^*_1)  - \frac{5}{48} \delta
\|x_1^*-y^*_1\|^2$, which is a contradiction unless $y^*_1 = x^*_1$.
\item[Step 3.] Now it is only left to show that $\|x^*_1 -x^*_2\|_{L^2} \le
  C\eta_1$. Since $Df_1(x)$ is a twice continuously differentiable function, a
  Taylor expansion around $x^*_2$ allows us to write
\begin{equation}
\begin{aligned}
  &\langle Df_1(x_2^*),\cdot \rangle_{L^2} = \langle  Df_1(x_1^*), \cdot \rangle_{L^2}  + D^2f_1(x_1^*)( x_2^* -x_1^*,\cdot) \\
&+\int_0^1 \frac{1}{2} D^3f_1 (x^*_1 +s( x^*_2 -x^*_1)) ( x_2^* -x_1^*, x_2^* -x_1^*, \cdot)\,ds.
 \end{aligned}
\end{equation} 
Note that one can replace $Df_1(x^*_1)$ by $Df_2(x^*_2)$ as both of them are $0$. Apply this identity to $x^*_2-x^*_1$, then
\begin{equation}
\begin{aligned}
&\langle Df_1(x_2^*) -  Df_1(x_2^*) , x^*_2 -x^*_1 \rangle_{L^2}  \\
&\le D^2f_1(x_1^*)( x_2^* -x_1^*,   x_2^* -x_1^*) +\frac{1}{2} \beta_3 \| x^*_1 -x^*_2\|^3_{L^2}\\
&\le  -\frac{3}{8}\delta \| x^*_1 - x^*_2\|^2 +\frac{1}{4} \delta \| x^*_1 - x^*_2\|^2 \\
& \le -\frac{1}{8} \delta \| x^*_1 - x^*_2\|^2.
 \end{aligned}
\end{equation}
This is equivalent to
\begin{equation}
  \| x^*_1 - x^*_2\|^2 \le\frac{8}{\delta} \|Df_1 (x^*_2) - Df_2(x^*_2) \| \|
  x^*_1-x^*_2\|.
\end{equation}
Taking $C= 8$ completes the step.
\end{itemize}
\end{proof}

\begin{proof}[Proof of Proposition \ref{proposition:confidenceMarking}]
First of all note that  since $P(\|X\|_{H^1_0}\le M) =1 $, lemma 5 ensures that 
all the non trivial critical points of  $f_1(x)=p_h(x)$and $f_2(x)=\hat p_h(x)$ are contained in 
$B_{H^1_0} (0,M)$. Let 
\begin{equation}
\eta_1 = \sup_{x \in B_{H^1_0}(0,M)} \| D\hat p_h(x) - Dp_h(x) \|_{L^2}
\end{equation}
and 
\begin{equation}
 \eta_2 = \sup_{x \in B_{H^1_0}(0,M)} \| D^2\hat p_h(x) - D^2p_h(x) \|.
\end{equation}
Consider the events $A=\{\eta_1 \leq C_1(\alpha)\}$ and $B=\{ \eta_2 \leq
 C_2(\alpha)\}$ where $C_1(\alpha)$ and $C_2(\alpha)$ are defined in Display
\ref{table:confidenceMarking}.
We can then use the uniform exponential inequalities on the first and second
derivatives of Lemma \ref{lemma:expInequalityFirstDerivative} and Lemma \ref{lemma:expInequalitySecondDerivative} of $\hat p_h$ to ensure
 $P((A \cap B)^c) =P(A^c+B^c)\le  P(A^c) +P(B^c)  \le \alpha
$ for $n$ large enough (which will be justified later in the proof). For now, under the event $A \cap B$, for each point $\hat x^*$ marked by
the algorithm of Display \ref{table:confidenceMarking}, i.e, $\hat x^* \in \mathcal{ \hat R}$ we have
\begin{equation}
\delta^2 \geq 8\beta_3C_1(\alpha) \geq 8\beta_3  \eta_1
\end{equation}
and
\begin{equation}
\delta \geq 8C_2(\alpha) \geq 8 \eta_2,
\end{equation}
hence the assumptions of Lemma \ref{lemma:maxareclose} are
satisfied. Furthermore, Lemma \ref{lemma:maxareclose} ensures that the ball
$B_{L^2}(\hat x^*,\delta(\hat x^*)/(2\beta_3) )$ contains a unique non-degenerate local mode $x^*$ of $p_h$ and that $\|\hat x^* -x^*\|_{L^2} \le 8\eta_1/\delta(\hat x^*)\le 8C_1(\alpha)/\delta(\hat x^*) $ under the event $A
\cap B$.

To justify $P(A^c)=P(\eta_1 \ge C_1(\alpha)) \le\alpha/2$, consider
the inequality of Lemma \ref{lemma:expInequalityFirstDerivative}. We have 
\begin{equation}
\begin{aligned}
&P \left( \sup_{x \in B_{H_0^1}(0,M)} \| D\hat p_h(x) - Dp_h(x)\|_{L^2}  \geq \epsilon\right) \\
&\leq C \exp \left(-\frac{4n\epsilon^2}{25K_1^2}+\frac{10MK_2}{\epsilon}\right).
\end{aligned}
\end{equation}
Let $a=4/(25K_1^2)$, $b=10MK_2$, $d=\log\left(\frac{\alpha}{2C}\right)<0$.
Take
\begin{equation}
\epsilon=\left(\frac{b}{a}\right)^{\frac{1}{3}}n^{-\frac{1}{3}} + \left(\frac{-d}{a}\right)^{\frac{1}{2}}n^{-\frac{1}{2}}.
\end{equation}
Then,
\begin{equation}
\begin{aligned}
&-\frac{4n\epsilon^2}{25K_1^2}+\frac{10MK_2}{\epsilon}\\
&=-an\epsilon^2+\frac{b}{\epsilon} \leq -an \left(
  \left(\frac{b}{a}\right)^{\frac{1}{3}}n^{\frac{1}{3}} +\left(\frac{-d}{a}
  \right)^{\frac{1}{2}}n^{\frac{1}{2}}\right)^2+\frac{b}{\left( \frac{b}{a}
  \right)^{\frac{1}{3}}n^{-\frac{1}{3}}} \\
&\leq -an \left(\left(\frac{b}{a}\right)^{\frac{2}{3}}n^{-\frac{2}{3}} +
  \frac{-d}{a}n^{-1}\right) + a^{\frac{1}{3}} b^{\frac{2}{3}}n^{\frac{1}{3}}\\
&=-a^{\frac{1}{3}} b^{\frac{2}{3}}n^{\frac{1}{3}} + d + a^{\frac{1}{3}}
b^{\frac{2}{3}}n^{\frac{1}{3}} = d =\log\left( \frac{\alpha}{2C}\right).
\end{aligned}
\end{equation}
With this particular choice of $\epsilon=C_1(\alpha)$ it then follows that
\begin{equation}
\begin{aligned}
&P \left( \sup_{x \in B_{H_0^1}(0,M)} \| D\hat p_h(x) - Dp_h(x)\|_{L^2} \geq
  \epsilon \right) \\
&\leq C e^{d} = C\frac{\alpha}{2C} = \alpha/2.
\end{aligned}
\end{equation}
An almost identical argument is used to justify $P(B)=P( \eta_2 \ge C_2(\alpha)) \le \alpha/2$.


%
%
\end{proof}

\begin{proof}[Proof of Proposition \ref{prop:type2ofmarking}]
Taking  $f_1(x)  =\hat p_h(x) $ and  $f_2(x)=p_h(x)$, the goal is to apply Lemma \ref{lemma:maxareclose} for all non-trivial critical points of $p_h$.
It is worth to mention that, under the given assumption, $\mathcal R$ is a finite set and $\mathcal R \subset B_{H^1_0}(0,M)$. As a result, there exists  a $\gamma$ such that
\begin{equation} 
-\gamma:=\sup_{x^*\in \mathcal C}\sup_{\|u\|_{L^2} =1} D^2 p_h(x^*) (u,u) <0.
\end{equation}
According to Lemmas
\ref{lemma:expInequalityFirstDerivative} and \ref{lemma:expInequalitySecondDerivative}, for $l=1,2$ there exist 
 constants $0<H_l<\infty$ and $0<h_l<\infty $ depending only on $K_1$, $K_2$ and $K_3$ and $M$ such that
\begin{equation}
\begin{aligned}
\label{boundfortype2}
P\left(\sup_{x\in B_{H^1_0}(0,M)} \|D^l\hat p_h(x) -D^l p_h(x) \|\ge \frac{H_l} {n^{1/3}}\right) \le C\exp\left(-h_ln^{1/3}\right).
\end{aligned}
\end{equation}
Let $\eta_l$, $l=1,2$ be defined as in (C3).  Let $F_n:=\{\eta_1\le \frac{H_1}{n^{1/3}}\} \cap \{\eta_2\le \frac{H_2}{n^{1/3}}\}$. Then, $P(F_n)\to 1$. The rest of the argument follows by assuming that $F_n$ holds.\\ \\
Suppose that, for large $n$, $H_1n^{-1/3}\le \gamma^2/(8\beta_3)$ and $H_2 n^{-1/3} \le \gamma /8$.
 Then, for all $x^* \in\mathcal R$, one has 
 \begin{equation}
 \begin{aligned}
 &\eta_1 \le H_1n^{-1/3}\le \gamma^2/(8\beta_3)  \le \delta(x^*)^2/(8\beta_3) \\
 &\eta_2\le H_2 n^{-1/3} \le \gamma /8 \le\delta(x^*)/8,
 \end{aligned}
 \end{equation}
 where as before 
 \begin{equation} 
 -\delta(x^*):=\sup_{\|u\|_{L^2} =1} D^2 p_h(x^*) (u,u) <0.
  \end{equation}
One can apply Lemma \ref{lemma:maxareclose} to all $x^*$ to conclude that there exists a $\hat x^*$ such that
\begin{itemize}
\item[1.] $\hat x^*$ is the unique local maximum of 
 $\hat p_h $ in $B_{L^2}(x^*,\delta(x^*)/(2\beta_3))$;
 \item[2.] 
 $\delta(\hat x^*):=-\sup_{\|u\|_{L^2} =1} D^2\hat p_h(\hat x^*) (u,u) \ge 3\delta(x^*)/8 \ge 3\gamma/8$; 
 \item[3.] $\|x^*-\hat x^*\|_{L^2}\le 8\eta_1/\delta(x^*)$.
 \end{itemize}
The following three steps complete the proof.
\begin{itemize}
\item[step 1.]
In this step, one shows that $\hat x^* \in \hat {\mathcal R}$. 
 According to item 2. in the first paragraph, $-\delta(\hat x^*):=\sup_{\|u\|_{L^2} =1} D^2\hat p_h(\hat x^*) (u,u) \le -3\gamma/8$. Thus,
  \begin{equation}
 -\sup_{\|u\|_{L^2} =1} D^2\hat p_h(\hat x^*) (u,u) \ge 3\gamma /8 \ge \max\{ \sqrt{8 \beta_3 C_1(\alpha)}, 8 C_2(\alpha)\}
\end{equation}
because both $C_1(\alpha)$ and $C_2(\alpha )$ are of order $O(n^{-1/3})$.  
\item[step 2.] One shows that $\Phi(\hat x^*) =x^*$, where  $\Phi$ is defined in equation \eqref{eq:mapbetweenmax}.
Then, according to equation \eqref{eq:mapbetweenmax}, it suffices to show that
\begin{equation}
x^* \in B_{L^2}(\hat x^*,\delta(\hat x^*)/(2\beta_3))\cap B(\hat x^*,\log(n) C_1 (\alpha)/\delta(\hat x^*)).
\end{equation}
From item 3. in the first paragraph, $\|\hat x^*-x^*\|\le 8\eta_1/\delta(x^*)$. 
Thus it suffices to show that 
\begin{equation}
\begin{aligned}
&B(\hat x^*, 8\eta_1/\delta(x^*)) \subset \\
& B_{L^2}(\hat x^*,\delta(\hat x^*)/(2\beta_3))\cap B(\hat x^*,\log(n) C_1 (\alpha)/\delta(\hat x^*)).
\end{aligned}
\end{equation}
This is equivalent to
\begin{equation}
\label{powerineq}
8\eta_1/\delta(x^*) \le \delta(\hat x^*)/(2\beta_3)  \ 
 \text {and} \ 
  8\eta_1/\delta(x^*)\le \log(n) C_1 (\alpha)/\delta(\hat x^*).
\end{equation}
The first inequality of \eqref{powerineq} is clear because  
\begin{equation}
\delta(\hat x^*) \ge 3\gamma/8 \ \ \text{and}  \ \  8 \eta_1/\delta(x^*)\le 8H_1n^{-1/3}/\gamma =O(n^{-1/3}).
\end{equation}
 The second one holds for large $n$ because 
 \begin{equation}
8\eta_1/\delta(x^*)\le 8H_1n^{-1/3}/\gamma
\end{equation}
while
\begin{equation}
\begin{aligned}
&\log(n) C_1 (\alpha)/\delta(\hat x^*)  \\
&\ge \log(n)C_1(\alpha)/\beta_2=C(\alpha,K_1,K_2,K_3,M)n^{-1/3}\log(n).
\end{aligned}
\end{equation}
\item[step 3.] To complete the argument, it suffices to show that if $\Phi(\hat y^*) = x^*$ for some $\hat y^*\in \hat {\mathcal R}$, then $\hat y^*=\hat 
x^*$. Since $\hat y^*\in \hat  {\mathcal R}$, by the algorithm of Display \ref{table:confidenceMarking},
\begin{equation} 
\delta(\hat y^*)\ge \max\{\sqrt{8\beta_3 C_1(\alpha)}, 8C_2(\alpha)\}.
\end{equation}
Thus, $\delta (\hat y^*) \ge c(\alpha,M,K_1,K_2,K_3)n^{-1/6} $ for some $c(\alpha,MK_1,K_2,K_3)>0$ independent of $n$. As a result, since $\Phi(\hat y^*) =x^*$,
\begin{equation}
\begin{aligned}
&\|x^* -\hat y^*\|_{L^2} \le \log(n) C_1(\alpha) /\delta(\hat y^*) =O (\log(n)n^{-1/6}).
\end{aligned}
\end{equation}
Then, for large $n$
\begin{equation}
\|\hat y^*-x^*\|_{L^2} \le \gamma/(2\beta_3) \le \delta(x^*)/(2\beta_3)
\end{equation}
and therefore $\hat y^* \in B(x^* ,\delta(x^*)/(2\beta_3))$. According to item 1. in the first paragraph, $\hat x^*$ 
is the unique local maximum of $\hat p_h$ in $B(x^* ,\delta(x^*)/(2\beta_3))$. It thus follows that $\hat y^*=\hat x^*$. 
\end{itemize}
\end{proof}

 \begin{proof}[Proof of Lemma \ref{lemma:connection}]
 We discuss the case $l=1$. Only the constants differ in the remaining cases. For any $x \in L^2$
 \begin{equation}
 \begin{aligned}
 &\| D \hat  p_h(x) -D \tilde p_h(x)\|\\
  \le& \frac{2}{n}\sum_{i=1}^{n}  \left \| K'_h( \| X_i -x\|^2)_{L^2}(x-X_i) - K'_h(\|\tilde X_i-x\|_{L^2}) (x-\tilde X_i)\right\|   \\
   \le& \frac{2}{n} \sum^n_{i=1} K_2\| X_i-x -(\tilde X_i -x)\| 
  \\
  =& \frac{2}{n} \sum^n_{i=1} K_2\| X_i -\tilde X_i \| 
 \end{aligned}
 \end{equation} 
Thus,
 \begin{equation}
 \begin{aligned}
 & E \left( \sup_{x\in L^2}\| D \hat  p_h(x) -D \tilde p_h(x)\| \ \big|X_1,\ldots,X_n \right)\le 2K_2\phi(m),
 \end{aligned}
 \end{equation} 
where $\phi(m)$ does not depend on $X_i$. Therefore, this implies   
 \begin{equation}
 \begin{aligned}
 & E \left( \sup_{x\in L^2}\| D \hat  p_h(x) -D \tilde p_h(x)\| \right)\le 2K_2\phi(m).
 \end{aligned}
 \end{equation}
 As a result,
  \begin{equation}
  \begin{aligned}
P \left(\sup_{x \in L^2}\|D \hat p_h(x) -D \tilde p_h(x) \|\ge \epsilon \right) \le \frac{2K_2\phi(m)}{\epsilon}.
  \end{aligned}
  \end{equation}
 \end{proof}

 \begin{proof}[Proof of Corollary \ref{corollary:consistenttildep}] 
 The argument for the first part is almost the same as the one in Proposition 
   \ref{proposition:confidenceMarking}, except that in this case one makes use of the fact that 
   \begin{equation}
   \begin{aligned} &P\left( \sup_{x\in B_{H^1} (0,M) } \|Dp_h(x) -D\tilde p_h(x) \|\ge\tilde C_1(\alpha) \right)\\
\le &P\left( \sup_{x\in B_{H^1} (0,M) } \|Dp_h(x) -D\hat  p_h(x) \|\ge C_1(\alpha/2) \right) \\
+&P\left( \sup_{x\in B_{H^1} (0,M) } \|D\hat p_h(x) -D\tilde  p_h(x) \|\ge \frac{8K_2\phi(m)}{\alpha} \right) \\
\le& \alpha/4 +\alpha/4 =\alpha/2,
\end{aligned}
\end{equation}
and that
 \begin{equation}
   \begin{aligned} 
   &P\left( \sup_{x\in B_{H^1} (0,M) } \|D^2p_h(x) -D^2\tilde p_h(x) \|\ge\tilde C_2(\alpha) \right) \le \alpha/2. 
\end{aligned} 
\end{equation}
 The argument for the second part is the same as the one in Proposition \ref{prop:type2ofmarking}, except that equation \eqref{boundfortype2} becomes
 \begin{equation}
\begin{aligned}
&P\left(\sup_{x\in B_{H^1_0}(0,M)} \|D^l\tilde p_h(x) -D^l p_h(x) \|\ge \frac{H_l} {n^{1/3}} +\sqrt{\phi(m)}\right) \\
\le &P\left(\sup_{x\in B_{H^1_0}(0,M)} \|D^l\tilde p_h(x) -D^l \hat p_h(x) \|\ge \sqrt{\phi(m)}\right) \\
+ &P\left(\sup_{x\in B_{H^1_0}(0,M)} \|D^l p_h(x) -D^l \hat p_h(x) \|\ge \frac{H_l} {n^{1/3}}\right) \\
\le &C\left(\exp(-h_ln^{1/3}) +\sqrt{\phi(m)}\right).
\end{aligned}
\end{equation}
 \end{proof}

%


\section{Additional Results}
\label{section:additionalResults}
%

\begin{lemma}
\label{lemma:expInequality}
Under the assumptions (H1), (H2), and (H3), 
\begin{equation}
\begin{aligned}
& P \left(\sup_{x\in B_{H^1_0} (0, M)} | \hat p_h(x) - p_h(x)| \ge  \epsilon\right) \\
& \leq C \exp\left(-\frac{32n\epsilon^2}{25K^2_0}+\frac{10MK_1}{\epsilon}\right)
\end{aligned}
\end{equation}
for $\epsilon$ sufficiently small.
\begin{proof}
By Chapter 7 of \cite{shiryayev2013selected}, the covering number $N_{\epsilon}$ of the ball $B_{H^1_0} (0, M)$ satisfies 
$N_{\epsilon} \le C\exp \left( \frac{M}{\epsilon}\right) $.
Let $\epsilon '=\epsilon/(10K_1)$. For a fixed radius $M$, pick $\{x_k\}_{k=1}^{N_{\epsilon'}}$ such that if $x\in B_{H^1_0} (0, M)$ then there exists $\|x_k-x\|_{L^2}\le\epsilon'$.
Note that for any fixed $x \in B_{H^1_0} (0,M)$,
\begin{equation}
\begin{aligned}
|\hat p_h (x) - \hat p_h(x_k)| &=\left| \frac{1}{n} \sum^{n}_{i=1} K_h(\|x-X_i\|^2_{L^2} ) -K_h(\|x_k - X_i\|^2_{L^2} )\right|\\
& \le \frac{1}{n} \sum_{i=1}^{n} \left| K_h(\|x-X_i\|^2_{L^2} ) -K_h(\|x_k - X_i\|^2_{L^2} )\right| \\
& \le \frac{1}{n}  \sum_{i=1}^{n} K_1 \|x-x_k\|_{L^2} =\frac{\epsilon}{10}.
\end{aligned}
\end{equation}
Thus, $| p_h (x) - p_h(x_k)| \le E_P|\hat p_h (x) -\hat p_h(x_k)| \le \frac{\epsilon}{10}$.
Since for any $x$,  
\begin{equation}
\begin{aligned}
&| \hat p_h(x) - p_h(x)|  \\ 
&\le | \hat p_h(x) -\hat p_h(x_k) | +| \hat p_h(x_k) - p_h(x_k)|+|
p_h(x_k) -p_h(x)|\\
&\le  | \hat p_h(x_k) - p_h(x_k)| +\frac{\epsilon}{5},
\end{aligned}
\end{equation}
it follows that
\begin{equation}
\begin{aligned}
&P \left(\sup_{x\in B_{H^1_0} (0, M)} | \hat p_h(x) -p_h(x) | \ge
  \epsilon\right) \\
&\le P \left(\sup_{1\le k \le N_{\epsilon'}} | \hat p_h(x_k) -p_h(x_k)| \ge \frac{4\epsilon}{5}\right)\\
& \le N_{\epsilon'}  P\left(  | \hat p_h(x_1) - p_h(x_1)| \ge \frac{4\epsilon}{5}\right) \\
& \le C\exp\left(\frac{10MK_1}{\epsilon}\right) \exp \left(-\frac{32n\epsilon^2}{25K_0^2}\right),
\end{aligned}
\end{equation}
where the last step uses Hoeffding's inequality.
\end{proof}
\end{lemma}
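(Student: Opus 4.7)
\textbf{Proof proposal for Lemma \ref{lemma:expInequality}.} The plan is to combine a covering number bound for the $L^2$ ball (exploiting the compact embedding $H^1_0 \hookrightarrow L^2$) with pointwise Hoeffding concentration and a Lipschitz continuity argument for $\hat p_h$ and $p_h$.

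First I would establish Lipschitz continuity of the functionals in $x$. Under (H1), the map $x \mapsto K_h(\|X_i-x\|_{L^2}^2)$ has Fréchet derivative bounded uniformly by $2K_1$ (cf. equation \eqref{eq:derivativeOfk} together with the bound $|K_h'(t^2) t| \le K_1$). Averaging and taking expectations gives $|\hat p_h(x)-\hat p_h(y)| \le 2K_1\|x-y\|_{L^2}$ and likewise for $p_h$. The factor 2 (or any constant of order $K_1$) is absorbed into the final constants.

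Second, I would invoke the standard covering number bound for the unit $H^1_0$ ball viewed as a subset of $L^2$. Because $H^1_0 \hookrightarrow L^2$ is compact (see Remark \ref{remark:convergenceOfSolutions}), Kolmogorov-type entropy estimates yield $N_{\epsilon'} \le C \exp(M/\epsilon')$ for some absolute constant $C$, where $N_{\epsilon'}$ is the $\epsilon'$-covering number in $L^2$. I would pick the mesh radius $\epsilon' \asymp \epsilon/K_1$ so that the Lipschitz fluctuation across each cell is at most a small fraction of $\epsilon$ (a convenient choice is $\epsilon' = \epsilon/(10 K_1)$, which makes $|\hat p_h(x)-\hat p_h(x_k)|$ and $|p_h(x)-p_h(x_k)|$ each at most $\epsilon/10$).

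Third, at each of the centers $\{x_k\}_{k=1}^{N_{\epsilon'}}$, the random variable $K_h(\|X_i-x_k\|_{L^2}^2)$ is bounded by $K_0$ by (H1), so Hoeffding's inequality yields
\begin{equation*}
P\bigl(|\hat p_h(x_k)-p_h(x_k)| \ge 4\epsilon/5\bigr) \le 2\exp\!\left(-\tfrac{32 n \epsilon^2}{25 K_0^2}\right).
\end{equation*}
A union bound over the $N_{\epsilon'}$ centers, combined with the Lipschitz step (which reduces $\sup_x |\hat p_h(x)-p_h(x)| \ge \epsilon$ to $\max_k |\hat p_h(x_k)-p_h(x_k)| \ge 4\epsilon/5$), gives exactly the claimed exponential bound $C\exp(-32 n \epsilon^2/(25 K_0^2) + 10 M K_1 / \epsilon)$.

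The main obstacle, if any, is pinning down the correct covering-number estimate for an $H^1_0$ ball inside $L^2$ with the precise form $\exp(M/\epsilon)$; I would cite a standard entropy estimate (e.g.\ from Kolmogorov–Tikhomirov or the reference in \cite{shiryayev2013selected}) rather than reprove it. The rest is routine: matching constants so that $\epsilon'=\epsilon/(10 K_1)$ produces the $10 M K_1/\epsilon$ term in the exponent, while the Hoeffding tail at each center produces the $-32 n \epsilon^2/(25 K_0^2)$ term.
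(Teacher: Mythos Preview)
Your proposal is correct and follows essentially the same route as the paper: cover $B_{H^1_0}(0,M)$ in $L^2$ using the entropy bound $N_{\epsilon'}\le C\exp(M/\epsilon')$, exploit the $K_1$-Lipschitz property of $x\mapsto K_h(\|X-x\|_{L^2}^2)$ to reduce the supremum to a maximum over the $N_{\epsilon'}$ centers (with $\epsilon'=\epsilon/(10K_1)$), and then apply Hoeffding plus a union bound. The only cosmetic difference is that the paper writes the Lipschitz constant as $K_1$ rather than $2K_1$, but as you note this is absorbed into the constants anyway.
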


\begin{lemma}
\label{lemma:expInequalityFirstDerivative}
Under the same assumptions of the last Lemma, for $\epsilon$ sufficiently small,
\begin{equation}
\begin{aligned}
& P \left(\sup_{x\in B_{H^1_0} (0, M)} \| D\hat p_h(x) - Dp_h(x)\|_{L^2} \ge
  \epsilon\right) \\
& \le C \exp\left(-\frac{4n\epsilon^2}{25K_1^2}+\frac{10M
    K_2}{\epsilon}\right ).
\end{aligned}
\end{equation}
\begin{proof}
The proof is very similar to that of the previous Lemma. Notice first that
\begin{equation}
\begin{aligned}
&\left\| K'_h(\|x-X_i\|^2_{L^2} )(x-X_i) -K'_h(\|x_k - X_i\|^2_{L^2}
  )(x_k-X_i)\right\|_{L^2} \\
& \le K_2\|x-x_k\|_{L^2}.
\end{aligned}
\end{equation}
By taking $\epsilon'=\epsilon/(10K_2)$ and using the same argument of the
previous Lemma, we have
\begin{equation}
\begin{aligned}
& P \left(\sup_{x\in B_{H^1_0} (0, M)} \| D\hat p_h(x) - Dp_h(x)\|_{L^2} \ge \epsilon\right) \\
& \le N_{\epsilon'}P\left(  \| D\hat p_h(x_1) -Dp_h(x_1)\|_{L^2} \ge
  \frac{4\epsilon}{5}\right).
\end{aligned}
\end{equation}
In oder to proceed, we need an Hoeffding-type inequality for Hilbert spaces. Specifically, 
using Lemma \ref{lemma:derivativesOfp}, one has
\begin{equation}
\begin{aligned}
Dp_h(x_1)=\frac{2}{n}\sum_{i=1}^nE_P K_h'
(\|X_i-x_1\|^2_{L^2})(x_1-X_i).
\end{aligned}
\end{equation}
Now, if one denotes $Z_i$ as
\begin{equation}
Z_i = 2K_h' (\|X_i-x_1\|^2_{L^2})(x_1-X_i) -2E_P K_h' (\|X_i-x_1\|^2_{L^2})(x_1-X_i),
\end{equation}
then 
\begin{equation}
\begin{aligned}
D\hat p_h(x_1) -Dp_h(x_1)  =\frac{1}{n}\sum_{i=1}^n Z_i.
\end{aligned}
\end{equation}
Thus, $E_P Z_i =0$ as a $L^2 $ function and $\|Z_i\|_{L^2} \le 4K_1$.\\
Finally, by using the exponential inequality of the Corollary of Lemma 4.3 in \cite{yurinskiui1976exponential},
\begin{equation}
\begin{aligned}
P\left( \left\| \frac{1}{n}\sum_{i=1}^n Z_i \right\|_{L^2}  \ge
  \frac{4\epsilon}{5}\right)\le 2\exp \left \{ -\frac{16n\epsilon^2}{50K_1^2} \left(  1+
    \frac{1.62\epsilon}{\frac{1}{5}K_1}\right)^{-1} \right\}
\end{aligned}
\end{equation}
and for $\epsilon$ sufficient small that 
$\left(1+\frac{1.62\epsilon}{K_1/5}\right)\le 2$, one gets the desired result.
\end{proof}
\end{lemma}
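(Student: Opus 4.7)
The plan is to follow the standard ``chaining via a single net plus Hilbert-space Hoeffding'' strategy, which is what the exponent form $-cn\epsilon^2 + cM/\epsilon$ suggests. The key observation is that although $D\hat p_h$ and $D p_h$ are random and deterministic $L^2$-valued functions of $x$, both maps are globally Lipschitz from $(L^2,\|\cdot\|_{L^2})$ into $(L^2,\|\cdot\|_{L^2})$ with a constant controlled by $K_2$. Indeed, from the pointwise computation
\begin{equation*}
\bigl\| K'_h(\|X-x\|_{L^2}^2)(x-X) - K'_h(\|X-y\|_{L^2}^2)(y-X)\bigr\|_{L^2} \le K_2\,\|x-y\|_{L^2},
\end{equation*}
which follows from (H1) by treating the integrand as a $C^1$ function of $x$ with derivative $\tfrac{1}{2}D^2K_h$, we get the same Lipschitz bound for $D\hat p_h$ and, by Jensen, for $Dp_h$. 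Hence the $L^2$-difference $D\hat p_h(\cdot)-Dp_h(\cdot)$ is $2K_2$-Lipschitz on all of $L^2$.

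Next I would exploit the compact embedding $H^1_0 \hookrightarrow L^2$ to control the metric entropy. By the classical Kolmogorov--Tikhomirov entropy estimates (see, e.g., Shiryaev's text cited in the paper), the $L^2$-covering number of $B_{H^1_0}(0,M)$ satisfies
\begin{equation*}
\log N_{\epsilon'} \le C\,\frac{M}{\epsilon'}.
\end{equation*}
Choosing $\epsilon' = \epsilon/(10K_2)$ and using Lipschitzness to pass between $x$ and its nearest net point $x_k$, one reduces the uniform bound to a union bound over the net of the pointwise events $\{\|D\hat p_h(x_k)-Dp_h(x_k)\|_{L^2} \ge 4\epsilon/5\}$. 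The factor $\exp(10MK_2/\epsilon)$ in the stated inequality comes directly from $N_{\epsilon'}$ with this choice of $\epsilon'$.

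The remaining step, and the main obstacle, is a Hilbert-space concentration inequality for the centered i.i.d. sum
\begin{equation*}
\frac{1}{n}\sum_{i=1}^n Z_i, \qquad Z_i := 2K'_h(\|X_i-x_k\|_{L^2}^2)(x_k - X_i) - E_P[\cdot],
\end{equation*}
whose summands are centered $L^2$-valued random elements with $\|Z_i\|_{L^2}\le 4K_1$ by assumption (H1). Because $L^2$ is infinite-dimensional, a scalar Hoeffding does not apply directly; I would use Yurinsky's exponential inequality (or the closely related Pinelis--Sakhanenko bound) for bounded random elements in a separable Hilbert space, which yields
\begin{equation*}
P\!\left(\Bigl\|\tfrac{1}{n}\sum_{i=1}^n Z_i\Bigr\|_{L^2}\ge \tfrac{4\epsilon}{5}\right) \le 2\exp\!\left(-\frac{c\,n\epsilon^2}{K_1^2}\right)
\end{equation*}
for $\epsilon$ small enough that the Bernstein-type denominator is bounded by a constant. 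Combining this with the entropy estimate through a single union bound gives the stated exponential rate, with constants matching $4/(25K_1^2)$ and $10MK_2$ after absorbing universal factors into $C$. The trickiest piece is handling the infinite-dimensional concentration correctly; once one commits to Yurinsky's inequality, the algebra is routine.
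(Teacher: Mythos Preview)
Your proposal is correct and follows essentially the same route as the paper: a Lipschitz bound with constant $K_2$ to pass to an $\epsilon/(10K_2)$-net of $B_{H^1_0}(0,M)$ in $L^2$, the Kolmogorov--Tikhomirov entropy estimate $\log N_{\epsilon'}\le CM/\epsilon'$ for the covering number, and Yurinsky's Hilbert-space exponential inequality applied to the centered summands $Z_i$ with $\|Z_i\|_{L^2}\le 4K_1$. The paper even invokes the same small-$\epsilon$ condition $(1+1.62\epsilon/(K_1/5))\le 2$ to simplify the Bernstein denominator, exactly as you anticipate.
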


Next we derive a similar result for the second derivative. Obtaining such a
result is a little more difficult because the operator norm of a linear operator
defined on a Hilbert space does not induce a Hilbert space structure. The
following discussion and intermediate results are useful to circumvent this problem.
\begin{definition}
Let $A: L^2 \to L^2$ be a linear operator. $A$ is said to be a Hilbert-Schmidt operator on $L^2$ if
\begin{equation}
\begin{aligned}
\|A\|^2_{HS} := \sum_{i=1}^{\infty} \|Ae_i\|^2_{L^2} < \infty
\end{aligned}
\end{equation}
where $\{e_i\}_{i=1}^{\infty}$ is an orthonormal basis of $L^2$.
\end{definition}
\begin{remark}
The above definition is independent of the choice of the orthonormal
basis. Furthermore, Hilbert-Schmidt operators form a Hilbert space 
with the following inner product: for two Hilbert-Schmidt operators $A$ and $B$,
the Hilbert-Schmidt inner product between $A$ and $B$ is defined as
\begin{equation}
\langle A,B\rangle_{HS}= \sum^{\infty}_{i=1}\langle Ae_i,Be_i\rangle_{L^2}
\end{equation}
where $\{e_1\}_{i=1}^{\infty}$ is any orthonormal basis of $L^2$.
Recall that the operator norm of bilinear operator $A$ is defined as 
\begin{equation}
\|A\|=\sup_{\{v\ : \ \|v\|_{L^2}=1\} }\|A(v)\|_{L^2}
\end{equation}
A standard result guarantees that $\|A\| \leq \|A\|_{HS}$.
\end{remark}
\begin{lemma}
\label{lemma:HSnormofB}
Let 
\begin{equation}
B( \cdot ,\cdot)  =  4K_h''(\|X -x\|^2_{L^2}) \langle x-X, \cdot \rangle_{L^2}\langle x-X, \cdot \rangle_{L^2}.
\end{equation}
Then $\|B\|_{HS} \leq 4K_2$ $P$-almost surely.
\begin{proof}
Let $Y= 2 \sqrt{ K''_h(\|X -x\|^2_{L^2})} (x-X)$, hence $Y\in L^2$. It is easily seen that $\|Y\|_{L^2} \leq 2\sqrt{K_2}$ $P$-almost
surely by (H1). Consider $\bar B(v) = \langle Y,v\rangle_{L^2} $ and let $\{e_i\}^{\infty}_{i=1}$ be an orthonormal basis.
We can write $Y =\sum_{i=1}^{\infty} y_i e_i$, where $y_i$ are random coefficients. Therefore,
$\|Y\|_{L^2}^2=\sum_{i=1}^{\infty} y_i^2 \leq 4 K_2$ $P$-almost surely. Finally,
$B : L^2 \to L^2$ can be expressed as
$B(v)= \bar B(v) Y$ and
\begin{equation}
\begin{aligned}
\|B\|_{HS}^2 &= \sum_{i=1}^{\infty} \| \bar B(e_i) Y\|^2_{L^2} =\sum_{i=1}^{\infty} \| \langle Y, e_i\rangle_{L^2} Y\|^2_{L^2} =
\sum_{i=1}^{\infty} \| y_iY\|^2_{L^2} \\
& = \|Y\|^2_{L^2} \sum_{i=1}^{\infty} y_i^2 =\|Y\|^4_{L^2}.
\end{aligned}
\end{equation}
This complete the proof.
\end{proof}
\end{lemma}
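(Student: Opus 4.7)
The plan is to identify $B$ with a rank-one linear operator on $L^2$ and then compute its Hilbert--Schmidt norm directly, using only elementary facts from the Remark preceding the lemma together with the bound on $K_h''$ supplied by assumption (H1).

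First, I would invoke the Riesz identification described in the Remark above to view $B$ as a linear operator from $L^2$ to $L^2$ rather than as a bilinear form. Writing $c = 4K_h''(\|X-x\|_{L^2}^2) \in \mathbb{R}$ and $Z = x - X \in L^2$, the bilinear form takes the form $B(u,v) = c\,\langle Z,u\rangle_{L^2}\langle Z,v\rangle_{L^2}$, so the associated linear operator is the rank-one map $v \mapsto c\langle Z,v\rangle_{L^2} Z$.

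Next, I would compute $\|B\|_{HS}$ directly against any orthonormal basis $\{e_i\}_{i\ge 1}$ of $L^2$. Because $B(e_i) = c\,\langle Z,e_i\rangle_{L^2}\,Z$, a single line of algebra together with Parseval's identity gives
\begin{equation*}
\|B\|_{HS}^2 \;=\; \sum_{i=1}^\infty \|B(e_i)\|_{L^2}^2 \;=\; c^2\,\|Z\|_{L^2}^2 \sum_{i=1}^\infty |\langle Z,e_i\rangle_{L^2}|^2 \;=\; c^2\,\|Z\|_{L^2}^4,
\end{equation*}
so $\|B\|_{HS} = |c|\,\|Z\|_{L^2}^2 = 4\,\bigl|K_h''(\|X-x\|_{L^2}^2)\bigr|\,\|x-X\|_{L^2}^2$. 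The conclusion then follows immediately from assumption (H1) applied with $\ell = 2$, which asserts $|K_h''(t^2)t^2| \le K_2$ uniformly in $t \in \mathbb{R}_+$; specializing to $t = \|X-x\|_{L^2}$ yields $\|B\|_{HS} \le 4K_2$ pointwise, hence $P$-almost surely.

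There is essentially no obstacle here: the only mildly subtle point is that $K_h''$ is not assumed to have a definite sign, so one should avoid introducing $\sqrt{K_h''}$; working with the scalar $c$ directly (and therefore with $|c|$ at the end) sidesteps this issue, since the Hilbert--Schmidt computation for a rank-one operator is insensitive to the sign of $c$ and (H1) bounds $|K_h''|$ directly.
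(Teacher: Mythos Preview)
Your proposal is correct and follows essentially the same approach as the paper: both recognize $B$ as a rank-one operator and compute its Hilbert--Schmidt norm directly against an orthonormal basis using Parseval, then invoke the $\ell=2$ case of (H1). Your version is in fact slightly cleaner, since the paper factors through $Y = 2\sqrt{K_h''(\|X-x\|_{L^2}^2)}\,(x-X)$, which tacitly assumes $K_h''\ge 0$; by keeping the scalar $c$ unfactored and only passing to $|c|$ at the end you avoid this unnecessary sign assumption, exactly as you note.
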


\begin{lemma}
\label{lemma:expInequalitySecondDerivative}
Under the same assumption of Lemma \ref{lemma:expInequality}, for $\epsilon$
small enough so that $\left(1+\frac{1.62\epsilon}{K_1/5}\right)\leq 2$, we have
\begin{equation}
\begin{aligned}
&P \left(\sup_{x\in B_{H^1_0} (0, M)} \| D^2\hat p_h(x) - D^2 p_h(x)\| \ge
  \epsilon\right) \\
& \le C \exp\left(-\frac{n\epsilon^2}{25K_2^2}+\frac{10MK_3}{\epsilon}\right ).
\end{aligned}
\end{equation}
\begin{proof}
%
If $\epsilon'$ is taken to be $\epsilon / (10K_3)$, one has
\begin{equation}
\begin{aligned}
&P \left(\sup_{x\in B_{H^1_0} (0, M)} \| D^2\hat p_h(x) - D^2 p_h(x)\|
  \ge \epsilon\right) \\
&\le N_{\epsilon'}P \left( \| D^2\hat p_h(x_1) -
  D^2 p_h(x_1)\| \ge \frac{4\epsilon}{5}\right).
\end{aligned}
\end{equation}
Let
\begin{equation}
B_i( \cdot ,\cdot)  =  4K_h''(\|X_i -x\|^2_{L^2}) \langle x-X_i, \cdot
\rangle_{L^2}\langle x-X_i, \cdot \rangle_{L^2}
\end{equation}
and 
\begin{equation}
C_i( \cdot ,\cdot)  =  2K_h'(\|X_i -x\|^2_{L^2}) \langle \cdot , \cdot
\rangle_{L^2}.
\end{equation}
For any bilinear operator $T (v,w) = t\langle v ,w\rangle$, where $t\in \mathbb R$, then 
$\| T\| =|t|$.
Thus,
\begin{equation}
\begin{aligned}
&\| D^2\hat p_h(x) - D^2 p_h(x)\|  \\
&= \left \| \frac{1}{n}\sum_{i=1}^{n} \left(B_i -E_P(B_i) \right)  + \frac{1}{n}\sum_{i=1}^{n} \left(C_i -E_P(C_i)\right)\right\| \\
&\le  \left\| \frac{1}{n}\sum_{i=1}^{n} \left(B_i -E_P(B_i) \right)  \right\|+ \left \|\frac{1}{n}\sum_{i=1}^{n} \left(C_i -E_P(C_i)\right)\right\|\\
&\le \left \| \frac{1}{n}\sum_{i=1}^{n} \left(B_i -E_P(B_i) \right)  \right\| \\
&+2 \left|\frac{1}{n} \sum^{n}_{i=1}  K'_h(\|x-X_i\|^2_{L^2})  - E_PK'_h(\|x-X\|^2_{L^2}) \right|.
\end{aligned}
\end{equation}
As a result,
\begin{equation}
\begin{aligned}
&P \left( \| D^2\hat p_h(x_1) - p_h(x_1)\| \ge \frac{4\epsilon}{5}\right) \\
&\le P \left(  \left \| \sum_{i=1}^{n}\frac{1}{n} \left(B_i -E_p(B_i) \right)
  \right\| \ge \frac{2\epsilon}{5}\right) \\
&+ P\left(2 \left|\frac{1}{n}\sum^{n}_{i=1}  K'_h(\|x-X_i\|^2_{L^2})  -E_PK'_h(\|x-X\|^2_{L^2}) \right| \ge \frac{2\epsilon}{5}\right).
 \end{aligned}
\end{equation}
Since $|K'_h(\|x-X_i\|^2_{L^2})| \le K_2 $, Hoeffding's inequality
implies
\begin{equation}
\begin{aligned}
&P\left( \left|\frac{1}{n}\sum^{n}_{i=1}
    K'_h(\|x-X_i\|^2_{L^2})-E_PK'_h(\|x-X\|^2_{L^2}) \right| \ge
  \frac{2\epsilon}{5}\right) \\
& \le 2\exp\left( -\frac{8n\epsilon^2}{25K_2^2}       \right).
\end{aligned}
\end{equation}
In order to apply the Corollary of Lemma 4.3 in
\cite{yurinskiui1976exponential} on the Hilbert-Schmidt operator norm, it suffices to check that $B_i-E_P(B_i)$ has
bounded Hilbert-Schmidt norm. 
Lemma \ref{lemma:HSnormofB} guarantees that
\begin{equation}
\label {eq:HScondition}
 \|B_i -E_P(B_i) \|_{HS} \le 8K_2
\end{equation}
almost surely.
Therefore, since $\| A\| \le \|A\|_{HS}$ for any bilinear operator $A$,  with small enough $\epsilon$, then
\begin{equation}
\begin{aligned}
& P \left(  \left \| \sum_{i=1}^{n}\frac{1}{n} \left(B_i -E_p(B_i) \right)  \right\|
  \ge \frac{2\epsilon}{5}\right) \\
  & \le P \left(  \left \| \sum_{i=1}^{n}\frac{1}{n} \left(B_i -E_p(B_i) \right)  \right\|_{HS}
  \ge \frac{2\epsilon}{5}\right) \\
& \le 2 \exp\left(  -\frac{4n\epsilon^2}{50K_2^2}
  \left(  1+ \frac{1.62\epsilon}{\frac{1}{5}K_1}\right)^{-1}\right) \\
& \le 2 \exp\left(  -\frac{n\epsilon^2}{25K_2^2}\right).
\end{aligned}
\end{equation}
\end{proof}
\end{lemma}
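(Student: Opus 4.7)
The plan is to combine a covering-number argument for $B_{H^1_0}(0,M)$ in the $L^2$ metric with concentration inequalities for the two structurally different pieces of $D^2 p_h$. First I would use the compact embedding of $H^1_0$ in $L^2$ to produce an $\epsilon'$-net $\{x_k\}_{k=1}^{N_{\epsilon'}}$ of $B_{H^1_0}(0,M)$ with $N_{\epsilon'} \le C\exp(M/\epsilon')$, exactly as in Lemma \ref{lemma:expInequality}. Using (H1), Taylor expansion of the map $x \mapsto D^2 K_h(\|X-x\|^2_{L^2})(\cdot,\cdot)$ yields the operator-norm Lipschitz bound with constant $\lesssim K_3$, so both $D^2 p_h$ and $D^2\hat p_h$ are $C K_3$-Lipschitz in $L^2$. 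Choosing $\epsilon' = \epsilon/(10K_3)$ and paying a union-bound factor $N_{\epsilon'}$ reduces the problem to controlling $P\bigl(\|D^2 \hat p_h(x_k) - D^2 p_h(x_k)\| \ge 4\epsilon/5\bigr)$ at a single $x_k$.

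Using the formula for $D^2 p_h$ from Lemma \ref{lemma:derivativesOfp}, I would split the pointwise summands into a ``scalar-times-identity'' piece $C_i = 2K_h'(\|X_i-x\|^2_{L^2})\langle\cdot,\cdot\rangle_{L^2}$, whose operator norm equals the absolute value of a real scalar bounded by $K_2$, and a ``rank-one'' piece $B_i = 4K_h''(\|X_i-x\|^2_{L^2})\langle x-X_i,\cdot\rangle_{L^2}\langle x-X_i,\cdot\rangle_{L^2}$. For the $C_i$ piece, the operator-norm deviation $\|n^{-1}\sum_i (C_i - \mathbb{E} C_i)\|$ reduces to a real-valued average of i.i.d.\ bounded quantities, and Hoeffding's inequality gives a subgaussian bound of the form $2\exp(-c n\epsilon^2/K_2^2)$.

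The main obstacle is the $B_i$ piece, because the operator norm on bilinear forms does not arise from a Hilbert-space structure and so does not directly admit a Yurinskii-type Bernstein inequality. My fix is to dominate it by the Hilbert--Schmidt norm, since HS operators on $L^2$ form a Hilbert space and $\|A\| \le \|A\|_{HS}$ for every bilinear $A$. A direct computation (writing $B_i(v,w) = \langle Y_i,v\rangle\langle Y_i,w\rangle$ with $Y_i = 2\sqrt{K_h''(\|X_i-x\|^2_{L^2})}(x-X_i)$) gives $\|B_i\|_{HS} = \|Y_i\|_{L^2}^2 \le 4K_2$ almost surely by (H1), so the centered summands $B_i - \mathbb{E} B_i$ are bounded in HS norm by $8K_2$. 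I then invoke the Yurinskii corollary used in the proof of Lemma \ref{lemma:expInequalityFirstDerivative} on the Hilbert space of HS operators to obtain $P\bigl(\|n^{-1}\sum_i (B_i - \mathbb{E} B_i)\|_{HS} \ge 2\epsilon/5\bigr) \le 2\exp(-cn\epsilon^2/K_2^2)$, where the smallness assumption $(1+1.62\epsilon/(K_1/5)) \le 2$ absorbs the linear $\epsilon$ correction in the denominator of the Bernstein exponent. Combining the two pointwise tail bounds with the $N_{\epsilon'}$ union bound and $\log N_{\epsilon'} \lesssim MK_3/\epsilon$ produces the claimed exponent $-\,n\epsilon^2/(25K_2^2) + 10MK_3/\epsilon$.
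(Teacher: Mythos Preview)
Your proposal is correct and follows essentially the same approach as the paper: a covering argument with $\epsilon'=\epsilon/(10K_3)$, the split of $D^2 p_h$ into the scalar-times-identity piece $C_i$ and the rank-one piece $B_i$, Hoeffding for the scalar part, and the passage to the Hilbert--Schmidt norm (with $\|B_i\|_{HS}\le 4K_2$) so that Yurinskii's inequality can be applied in a genuine Hilbert space. Your explicit identification of the obstacle---that the operator norm lacks a Hilbert-space structure---and its resolution via the HS norm mirrors exactly the paper's argument and its auxiliary Lemma~\ref{lemma:HSnormofB}.
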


\bibliography{MyBibliography.bib}

\begin{thebibliography}{29}

\bibitem[\protect\citeauthoryear{Ambrosetti and
  Prodi}{1995}]{ambrosetti1995primer}
\begin{bbook}[author]
\bauthor{\bsnm{Ambrosetti},~\bfnm{Antonio}\binits{A.}} \AND
  \bauthor{\bsnm{Prodi},~\bfnm{Giovanni}\binits{G.}}
(\byear{1995}).
\btitle{A Primer of Nonlinear Analysis}
\bvolume{34}.
\bpublisher{Cambridge University Press}.
\end{bbook}
\endbibitem

\bibitem[\protect\citeauthoryear{Bongiorno and
  Goia}{2015}]{bongiorno2015clustering}
\begin{barticle}[author]
\bauthor{\bsnm{Bongiorno},~\bfnm{Enea}\binits{E.}} \AND
  \bauthor{\bsnm{Goia},~\bfnm{Aldo}\binits{A.}}
(\byear{2015}).
\btitle{A clustering method for Hilbert functional data based on the small ball
  probability}.
\bjournal{arXiv preprint arXiv:1501.04308}.
\end{barticle}
\endbibitem

\bibitem[\protect\citeauthoryear{Carreira-Perpi{\~n}{\'a}n}{2006}]{carreira2006fast}
\begin{binproceedings}[author]
\bauthor{\bsnm{Carreira-Perpi{\~n}{\'a}n},~\bfnm{Miguel~{\'A}.}\binits{M.~{\'A}.}}
(\byear{2006}).
\btitle{Fast nonparametric clustering with Gaussian blurring mean-shift}.
In \bbooktitle{Proceedings of the 23rd International Conference on Machine
  Learning}
\bpages{153--160}.
\end{binproceedings}
\endbibitem

\bibitem[\protect\citeauthoryear{Chac{\'o}n}{2012}]{chacon2012clusters}
\begin{barticle}[author]
\bauthor{\bsnm{Chac{\'o}n},~\bfnm{Jos{\'e}~E.}\binits{J.~E.}}
(\byear{2012}).
\btitle{Clusters and water flows: a novel approach to modal clustering through
  Morse theory}.
\bjournal{arXiv preprint arXiv:1212.1384}.
\end{barticle}
\endbibitem

\bibitem[\protect\citeauthoryear{Chac{\'o}n}{2014}]{chacon2014populationbackground}
\begin{barticle}[author]
\bauthor{\bsnm{Chac{\'o}n},~\bfnm{Jos{\'e}~E.}\binits{J.~E.}}
(\byear{2014}).
\btitle{A population background for nonparametric density-based clustering}.
\bjournal{arXiv preprint arXiv:1408.1381}.
\end{barticle}
\endbibitem

\bibitem[\protect\citeauthoryear{Chac{\'o}n and Duong}{2013}]{chacon2013data}
\begin{barticle}[author]
\bauthor{\bsnm{Chac{\'o}n},~\bfnm{Jos{\'e}~E.}\binits{J.~E.}} \AND
  \bauthor{\bsnm{Duong},~\bfnm{Tarn}\binits{T.}}
(\byear{2013}).
\btitle{Data-driven density derivative estimation, with applications to
  nonparametric clustering and bump hunting}.
\bjournal{Electronic Journal of Statistics}
\bvolume{7}
\bpages{499--532}.
\end{barticle}
\endbibitem

\bibitem[\protect\citeauthoryear{Chazal et~al.}{2014}]{chazal2014robust}
\begin{barticle}[author]
\bauthor{\bsnm{Chazal},~\bfnm{Fr\'ed\'eric}\binits{F.}},
  \bauthor{\bsnm{Fasy},~\bfnm{B.~Terese}\binits{B.~T.}},
  \bauthor{\bsnm{Lecci},~\bfnm{Fabrizio}\binits{F.}},
  \bauthor{\bsnm{Michel},~\bfnm{Bertrand}\binits{B.}},
  \bauthor{\bsnm{Rinaldo},~\bfnm{Alessandro}\binits{A.}} \AND
  \bauthor{\bsnm{Wasserman},~\bfnm{Larry}\binits{L.}}
(\byear{2014}).
\btitle{Robust topological inference: distance to a measure and kernel
  distance}.
\bjournal{arXiv preprint arXiv:1412.7197}.
\end{barticle}
\endbibitem

\bibitem[\protect\citeauthoryear{Cheng}{1995}]{chengmeanshift}
\begin{barticle}[author]
\bauthor{\bsnm{Cheng},~\bfnm{Yizong}\binits{Y.}}
(\byear{1995}).
\btitle{Mean shift, mode seeking, and clustering}.
\bjournal{IEEE Transactions on Pattern Analysis and Machine Intelligence}
\bvolume{17}
\bpages{790-799}.
\end{barticle}
\endbibitem

\bibitem[\protect\citeauthoryear{Ciollaro et~al.}{2014}]{ciollaromeanshift}
\begin{barticle}[author]
\bauthor{\bsnm{Ciollaro},~\bfnm{Mattia}\binits{M.}},
  \bauthor{\bsnm{Genovese},~\bfnm{Christopher~R.}\binits{C.~R.}},
  \bauthor{\bsnm{Lei},~\bfnm{Jing}\binits{J.}} \AND
  \bauthor{\bsnm{Wasserman},~\bfnm{Larry}\binits{L.}}
(\byear{2014}).
\btitle{The mean-shift algorithm for mode hunting and clustering in infinite
  dimensions}.
\bjournal{arXiv preprint arXiv:1408.1187}.
\end{barticle}
\endbibitem

\bibitem[\protect\citeauthoryear{Comaniciu, Ramesh and
  Meer}{2001}]{comaniciuadaptivemeanshift}
\begin{binproceedings}[author]
\bauthor{\bsnm{Comaniciu},~\bfnm{Dorin}\binits{D.}},
  \bauthor{\bsnm{Ramesh},~\bfnm{Visvanathan}\binits{V.}} \AND
  \bauthor{\bsnm{Meer},~\bfnm{Peter}\binits{P.}}
(\byear{2001}).
\btitle{The variable bandwidth mean shift and data-driven scale selection}.
In \bbooktitle{Proceedings of the Eighth IEEE International Conference on
  Computer Vision}
\bvolume{1}
\bpages{438-445}.
\end{binproceedings}
\endbibitem

\bibitem[\protect\citeauthoryear{Dabo-Niang, Ferraty and
  Vieu}{2004}]{dabo2004estimation}
\begin{barticle}[author]
\bauthor{\bsnm{Dabo-Niang},~\bfnm{Sophie}\binits{S.}},
  \bauthor{\bsnm{Ferraty},~\bfnm{Fr{\'e}d{\'e}ric}\binits{F.}} \AND
  \bauthor{\bsnm{Vieu},~\bfnm{Philippe}\binits{P.}}
(\byear{2004}).
\btitle{Estimation du mode dans un espace vectoriel semi-norm{\'e}}.
\bjournal{Comptes Rendus Mathematique}
\bvolume{339}
\bpages{659--662}.
\end{barticle}
\endbibitem

\bibitem[\protect\citeauthoryear{Delaigle and Hall}{2010}]{delaigle2010}
\begin{barticle}[author]
\bauthor{\bsnm{Delaigle},~\bfnm{Aurore}\binits{A.}} \AND
  \bauthor{\bsnm{Hall},~\bfnm{Peter}\binits{P.}}
(\byear{2010}).
\btitle{Defining probability density for a distribution of random functions}.
\bjournal{The Annals of Statistics}
\bvolume{38}
\bpages{1171--1193}.
\end{barticle}
\endbibitem

\bibitem[\protect\citeauthoryear{Evans}{1998}]{evans1998partial}
\begin{bbook}[author]
\bauthor{\bsnm{Evans},~\bfnm{Lawrence~C.}\binits{L.~C.}}
(\byear{1998}).
\btitle{Partial differential equations}.
\bpublisher{American Mathematical Society}.
\end{bbook}
\endbibitem

\bibitem[\protect\citeauthoryear{Ferraty, Kudraszow and
  Vieu}{2012}]{ferratysurrogatedensity}
\begin{barticle}[author]
\bauthor{\bsnm{Ferraty},~\bfnm{Fr\'ed\'eric}\binits{F.}},
  \bauthor{\bsnm{Kudraszow},~\bfnm{Nadia}\binits{N.}} \AND
  \bauthor{\bsnm{Vieu},~\bfnm{Philippe}\binits{P.}}
(\byear{2012}).
\btitle{Nonparametric estimation of a surrogate density function in
  infinite-dimensional spaces}.
\bjournal{Journal of Nonparametric Statistics}
\bvolume{24}
\bpages{447-464}.
\end{barticle}
\endbibitem

\bibitem[\protect\citeauthoryear{Ferraty and Romain}{2011}]{ferraty2011oxford}
\begin{bbook}[author]
\bauthor{\bsnm{Ferraty},~\bfnm{Fr{\'e}d{\'e}ric}\binits{F.}} \AND
  \bauthor{\bsnm{Romain},~\bfnm{Yves}\binits{Y.}}
(\byear{2011}).
\btitle{The Oxford Handbook of Functional Data Analaysis}.
\bpublisher{Oxford University Press}.
\end{bbook}
\endbibitem

\bibitem[\protect\citeauthoryear{Ferraty and
  Vieu}{2006}]{ferraty2006nonparametric}
\begin{bbook}[author]
\bauthor{\bsnm{Ferraty},~\bfnm{Fr\'ed\'eric}\binits{F.}} \AND
  \bauthor{\bsnm{Vieu},~\bfnm{Philippe}\binits{P.}}
(\byear{2006}).
\btitle{Nonparametric Functional Data Analysis: Theory and Practice}.
\bpublisher{Springer}.
\end{bbook}
\endbibitem

\bibitem[\protect\citeauthoryear{Fukunaga and
  Hostetler}{1975}]{fukunagahostetlermeanshift}
\begin{barticle}[author]
\bauthor{\bsnm{Fukunaga},~\bfnm{Keinosuke}\binits{K.}} \AND
  \bauthor{\bsnm{Hostetler},~\bfnm{Larry}\binits{L.}}
(\byear{1975}).
\btitle{The estimation of the gradient of a density function, with applications
  in pattern recognition}.
\bjournal{IEEE Transactions on Information Theory}
\bvolume{21}
\bpages{32--40}.
\end{barticle}
\endbibitem

\bibitem[\protect\citeauthoryear{Gasser, Hall and
  Presnell}{1998}]{gasser1998nonparametric}
\begin{barticle}[author]
\bauthor{\bsnm{Gasser},~\bfnm{Theo}\binits{T.}},
  \bauthor{\bsnm{Hall},~\bfnm{Peter}\binits{P.}} \AND
  \bauthor{\bsnm{Presnell},~\bfnm{Brett}\binits{B.}}
(\byear{1998}).
\btitle{Nonparametric estimation of the mode of a distribution of random
  curves}.
\bjournal{Journal of the Royal Statistical Society, Series B}
\bvolume{60}
\bpages{681--691}.
\end{barticle}
\endbibitem

\bibitem[\protect\citeauthoryear{Gasser and
  M{\"u}ller}{1984}]{gasser1984estimating}
\begin{barticle}[author]
\bauthor{\bsnm{Gasser},~\bfnm{Theo}\binits{T.}} \AND
  \bauthor{\bsnm{M{\"u}ller},~\bfnm{Hans-Georg}\binits{H.-G.}}
(\byear{1984}).
\btitle{Estimating regression functions and their derivatives by the kernel
  method}.
\bjournal{Scandinavian Journal of Statistics}
\bpages{171--185}.
\end{barticle}
\endbibitem

\bibitem[\protect\citeauthoryear{Genovese
  et~al.}{2013}]{genovese2013nonparametric}
\begin{barticle}[author]
\bauthor{\bsnm{Genovese},~\bfnm{Christopher}\binits{C.}},
  \bauthor{\bsnm{Perone-Pacifico},~\bfnm{Marco}\binits{M.}},
  \bauthor{\bsnm{Verdinelli},~\bfnm{Isabella}\binits{I.}} \AND
  \bauthor{\bsnm{Wasserman},~\bfnm{Larry}\binits{L.}}
(\byear{2013}).
\btitle{Nonparametric inference for density modes}.
\bjournal{arXiv preprint arXiv:1312.7567}.
\end{barticle}
\endbibitem

\bibitem[\protect\citeauthoryear{Hall and Heckman}{2002}]{hall2002depicting}
\begin{barticle}[author]
\bauthor{\bsnm{Hall},~\bfnm{Peter}\binits{P.}} \AND
  \bauthor{\bsnm{Heckman},~\bfnm{Nancy~E.}\binits{N.~E.}}
(\byear{2002}).
\btitle{Estimating and depicting the structure of a distribution of random
  functions}.
\bjournal{Biometrika}
\bvolume{89}
\bpages{145--158}.
\end{barticle}
\endbibitem

\bibitem[\protect\citeauthoryear{Horv{\'a}th and
  Kokoszka}{2012}]{horvath2012inference}
\begin{bbook}[author]
\bauthor{\bsnm{Horv{\'a}th},~\bfnm{Lajos}\binits{L.}} \AND
  \bauthor{\bsnm{Kokoszka},~\bfnm{Piotr}\binits{P.}}
(\byear{2012}).
\btitle{Inference for Functional Data with Applications}.
\bpublisher{Springer}.
\end{bbook}
\endbibitem

\bibitem[\protect\citeauthoryear{Hunter and
  Nachtergaele}{2001}]{hunter2001applied}
\begin{bbook}[author]
\bauthor{\bsnm{Hunter},~\bfnm{John~K.}\binits{J.~K.}} \AND
  \bauthor{\bsnm{Nachtergaele},~\bfnm{Bruno}\binits{B.}}
(\byear{2001}).
\btitle{Applied analysis}.
\bpublisher{World Scientific}.
\end{bbook}
\endbibitem

\bibitem[\protect\citeauthoryear{Jacques and
  Preda}{2013}]{jacques2013functional}
\begin{barticle}[author]
\bauthor{\bsnm{Jacques},~\bfnm{Julien}\binits{J.}} \AND
  \bauthor{\bsnm{Preda},~\bfnm{Cristian}\binits{C.}}
(\byear{2013}).
\btitle{Functional data clustering: a survey}.
\bjournal{Advances in Data Analysis and Classification}
\bpages{1--25}.
\end{barticle}
\endbibitem

\bibitem[\protect\citeauthoryear{Jost}{2011}]{jost2011riemannian}
\begin{bbook}[author]
\bauthor{\bsnm{Jost},~\bfnm{J{\"u}rgen}\binits{J.}}
(\byear{2011}).
\btitle{Riemannian Geometry and Geometric Analysis}.
\bpublisher{Springer}.
\end{bbook}
\endbibitem

\bibitem[\protect\citeauthoryear{Ramsay and
  Silverman}{2005}]{ramsay2005functional}
\begin{bbook}[author]
\bauthor{\bsnm{Ramsay},~\bfnm{James~O.}\binits{J.~O.}} \AND
  \bauthor{\bsnm{Silverman},~\bfnm{Bernard~W.}\binits{B.~W.}}
(\byear{2005}).
\btitle{Functional Data Analysis}.
\bpublisher{Springer}.
\end{bbook}
\endbibitem

\bibitem[\protect\citeauthoryear{Schechter}{2004}]{schechter2004introduction}
\begin{bbook}[author]
\bauthor{\bsnm{Schechter},~\bfnm{Martin}\binits{M.}}
(\byear{2004}).
\btitle{An introduction to nonlinear analysis}.
\bpublisher{Cambridge University Press}.
\end{bbook}
\endbibitem

\bibitem[\protect\citeauthoryear{Shiryayev}{1993}]{shiryayev2013selected}
\begin{bbook}[author]
\bauthor{\bsnm{Shiryayev},~\bfnm{Albert~N.}\binits{A.~N.}}
(\byear{1993}).
\btitle{Selected works of A. N. Kolmogorov: Volume III: Information theory and
  the theory of algorithms}
\bvolume{27}.
\bpublisher{Springer Science \& Business Media}.
\end{bbook}
\endbibitem

\bibitem[\protect\citeauthoryear{Yurinski{\u\i}}{1976}]{yurinskiui1976exponential}
\begin{barticle}[author]
\bauthor{\bsnm{Yurinski{\u\i}},~\bfnm{Vladim~V.}\binits{V.~V.}}
(\byear{1976}).
\btitle{Exponential inequalities for sums of random vectors}.
\bjournal{Journal of multivariate analysis}
\bvolume{6}
\bpages{473--499}.
\end{barticle}
\endbibitem

\end{thebibliography}

\end{document}